\numberwithin{equation}{section}
\newtheorem{theorem}[equation]{Theorem}
\newtheorem*{theorem*}{Theorem} 
\newtheorem{lemma}[equation]{Lemma}
\newtheorem*{conjecture*}{Mamma Conjecture}
\newtheorem*{conjecture1*}{Mamma Conjecture (revisited)}
\newtheorem{proposition}[equation]{Proposition}
\newtheorem{corollary}[equation]{Corollary}
\newtheorem*{corollary*}{Corollary}
\theoremstyle{remark}
\newtheorem{example}[equation]{Example}
\newtheorem{notation}[equation]{Notation}
\theoremstyle{remark}
\newtheorem{remark}[equation]{Remark}
\newcommand{\cA}{{\mathcal A}}
\newcommand{\cB}{{\mathcal B}}
\newcommand{\cC}{{\mathcal C}}
\newcommand{\cD}{{\mathcal D}}
\newcommand{\cF}{{\mathcal F}}
\newcommand{\cG}{{\mathcal G}}
\newcommand{\cN}{{\mathcal N}}
\newcommand{\cO}{{\mathcal O}}
\newcommand{\bbA}{\mathbb{A}}
\newcommand{\bbC}{\mathbb{C}}
\newcommand{\bbF}{\mathbb{F}}
\newcommand{\bbG}{\mathbb{G}}
\newcommand{\bbL}{\mathbb{L}}
\newcommand{\bbN}{\mathbb{N}}
\newcommand{\bbP}{\mathbb{P}}
\newcommand{\bbR}{\mathbb{R}}
\newcommand{\bbQ}{\mathbb{Q}}
\newcommand{\bbZ}{\mathbb{Z}}
\DeclareMathOperator{\id}{id}
\DeclareMathOperator{\NChow}{NChow} 
\DeclareMathOperator{\NNum}{NNum} 
\newcommand{\dgcat}{\mathrm{dgcat}} 
\newcommand{\perf}{\mathrm{perf}}
\newcommand{\dg}{\mathrm{dg}}
\newcommand{\Hom}{\mathrm{Hom}}
\newcommand{\rep}{\mathrm{rep}}
\newcommand{\Hmo}{\mathrm{Hmo}}
\newcommand{\op}{\mathrm{op}}
\newcommand{\too}{\longrightarrow}
\let\oldmarginpar\marginpar
\def\marginpar#1{\oldmarginpar{\tiny #1}}
\begin{document}

\title[Jacques Tits motivic measure]{Jacques Tits motivic measure}
\author{Gon{\c c}alo~Tabuada}
\address{Gon{\c c}alo Tabuada, Mathematics Institute, Zeeman Building, University of Warwick, Coventry CV4 7AL UK.}
\email{goncalo.tabuada@warwick.ac.uk}
\thanks{The author was partially supported by the Huawei-IH\'ES research funds.}

\date{\today}

\abstract{In this article we construct a new motivic measure called the {\em Jacques Tits motivic measure}. As a first main application of the Jacques Tits motivic measure, we prove that two Severi-Brauer varieties (or, more generally, two twisted Grassmannian varieties), associated to $2$-torsion central simple algebras, have the same class in the Grothendieck ring of varieties if and only if they are isomorphic. In addition, we prove that if two Severi-Brauer varieties, associated to central simple algebras of period $\{3, 4, 5, 6\}$,  have the same class in the Grothendieck ring of varieties, then they are necessarily birational to each other. As a second main application of the Jacques Tits motivic measure, we prove that two quadric hypersurfaces (or, more generally, two involution varieties), associated to quadratic forms of dimension $6$ or to quadratic forms of arbitrary dimension defined over a base field $k$ with $I^3(k)=0$, have the same class in the Grothendieck ring of varieties if and only if they are isomorphic. In addition, we prove that the latter main application  also holds for products of quadric hypersurfaces.}
}

\maketitle


\section{Introduction}\label{sec:intro}
Let $k$ be a field and $\mathrm{Var}(k)$ the category of {\em varieties}, i.e., reduced separated $k$-schemes of finite type. The {\em Grothendieck ring of varieties $K_0\mathrm{Var}(k)$}, introduced in a letter from Grothendieck to Serre (consult \cite[letter of 16/08/1964]{SG}), is defined as the quotient of the free abelian group on the set of isomorphism classes of varieties $[X]$ by the ``cut-and-paste'' relations $[X]=[Y]+[X\backslash Y]$, where $Y$ is a closed subvariety of $X$. The multiplication law is induced by the product of varieties. Despite the efforts of several mathematicians (consult, for example, the works of Bittner \cite{Bittner} and Larsen-Lunts \cite{LL}), the structure of the Grothendieck ring of varieties still remains nowadays poorly understood. In order to capture some of its flavor, a few {\em motivic measures}, i.e., ring homomorphisms $\mu\colon K_0\mathrm{Var}(k) \to R$, have been built. For example, when $k$ is finite the assignment $X \mapsto \# X(k)$ gives rise to the counting motivic measure $\mu_\#\colon K_0\mathrm{Var}(k) \to \bbZ$, and when $k=\bbC$ the assignment $X \mapsto \chi(X):= \Sigma_n (-1)^n \mathrm{dim}_\bbQ\,H^n_c(X^{\mathrm{an}};\bbQ)$ gives rise to the Euler characteristic motivic measure $\mu_\chi\colon K_0\mathrm{Var}(k) \to \bbZ$. In this article we construct a new motivic measure $\mu_{\mathrm{JT}}$ called the {\em Jacques Tits motivic measure}. Making use of it, we then establish several new properties of the Grothendieck ring of varieties.
%
\subsection*{Statement of results}
Let $k$ be a field of characteristic zero and $\Gamma:=\mathrm{Gal}(\overline{k}/k)$ its absolute Galois group.

Recall that given a split semi-simple algebraic group $G$ over $k$, a parabolic subgroup $P \subset G$, and a $1$-cocycle $\gamma\colon \Gamma \to G(\overline{k})$, we can consider the projective homogeneous variety $\cF:=G/P$ as well as its twisted form ${}_\gamma \cF$. Let us write $\widetilde{G}$ and $\widetilde{P}$ for the universal covers of $G$ and $P$, respectively, $R(\widetilde{G})$ and $R(\widetilde{P})$ for the associated representation rings, $n(\cF)$ for the index $[W(\widetilde{G}):W(\widetilde{P})]$ of the Weyl groups, $\widetilde{Z}$ for the center of $\widetilde{G}$, and $Ch$ for the character group $\mathrm{Hom}(\widetilde{Z},\bbG_m)$. As proved by Steinberg in \cite{Steinberg} (consult also \cite[\S12.5-\S12.8]{Panin}), we have $R(\widetilde{P})=\oplus_i R(\widetilde{G})\rho_i$, where $\{\rho_i\}_i$ is a canonical $Ch$-homogeneous basis of cardinality $n(\cF)$. Let us denote by $A_{\rho_i}$ the Jacques Tits central simple $k$-algebra associated to $\rho_i$; consult \cite[\S27]{Book-inv}\cite{Tits} for details.
\begin{notation}
Let us write $K_0\mathrm{Var}(k)^{\mathrm{tw}}$ for the smallest subring of $K_0\mathrm{Var}(k)$ containing the Grothendieck classes $[{}_\gamma \cF]$ of all twisted projective homogeneous varieties ${}_\gamma \cF$.
\end{notation}
Consider the Brauer group $\mathrm{Br}(k)$ of $k$, the associated group ring $\bbZ[\mathrm{Br}(k)]$, and the following quotient ring
$$ R_{\mathrm{B}}(k):= \bbZ[\mathrm{Br}(k)]/ \langle [k]+[A\otimes A'] -[A] - [A'] \,|\,(\mathrm{ind}(A),\mathrm{ind}(A'))=1 \rangle\,,$$
where $A$ and $A'$ are central simple $k$-algebras with coprime indexes. Note that in the particular case where every element of $\mathrm{Br}(k)$ is of $q$-primary torsion for some prime number $q$, it follows from the $p$-primary decomposition of the Brauer group $\mathrm{Br}(k)=\oplus_p \mathrm{Br}(k)\{p\}$ that $R_{\mathrm{B}}(k)$ reduces to the group ring $\bbZ[\mathrm{Br}(k)\{q\}]$. 

\medbreak

Our main result is the following:
\begin{theorem}\label{thm:main}
The assignment ${}_\gamma \cF \mapsto \Sigma_i[A_{\rho_i}]$ gives rise to a motivic measure $\mu_{\mathrm{JT}}\colon K_0\mathrm{Var}(k)^{\mathrm{tw}}\to~R_{\mathrm{B}}(k)$.
\end{theorem}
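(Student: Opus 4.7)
The approach I would take is to factor $\mu_{\mathrm{JT}}$ as a composition of two motivic measures, first passing through a Grothendieck ring of noncommutative Chow motives in which the Tits algebras $A_{\rho_i}$ appear naturally, and then projecting onto Brauer classes. The first step is to invoke the Bondal--Larsen--Lunts/Tabuada motivic measure
$$\mu_{\mathrm{nc}}\colon K_0\mathrm{Var}(k) \longrightarrow K_0(\NChow(k)), \qquad [X] \longmapsto [U(\perfdg(X))],$$
available in characteristic zero via Bittner's presentation of $K_0\mathrm{Var}(k)$ combined with Orlov's semi-orthogonal decomposition of the derived category of a blow-up along a smooth center. The symmetric monoidal structure on $\NChow(k)$ makes $\mu_{\mathrm{nc}}$ a ring homomorphism; I would restrict it to $K_0\mathrm{Var}(k)^{\mathrm{tw}}$.

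Next, for each twisted projective homogeneous variety ${}_\gamma \cF$, I would apply Panin's semi-orthogonal decomposition (built on Steinberg's basis $\{\rho_i\}_i$),
$$\perfdg({}_\gamma \cF) \;=\; \langle \perfdg(A_{\rho_1}), \dots, \perfdg(A_{\rho_{n(\cF)}}) \rangle.$$
Since semi-orthogonal decompositions become direct sums after passage to $\NChow(k)$, this yields
$$\mu_{\mathrm{nc}}([{}_\gamma \cF]) \;=\; \sum_{i=1}^{n(\cF)} [U(\perfdg(A_{\rho_i}))] \;\in\; K_0(\NChow(k)).$$
Multiplicativity of $\mu_{\mathrm{nc}}$, combined with the fact that the Tits algebras of a product of projective homogeneous varieties are tensor products of the Tits algebras of the factors, ensures the same sum-of-tensor-products pattern on products.

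Finally, I would define a ring homomorphism $\pi\colon \Sigma \to R_{\mathrm{B}}(k)$ sending $[U(\perfdg(A))] \mapsto [A]$, where $\Sigma \subset K_0(\NChow(k))$ is the subring generated by noncommutative Chow motives of central simple $k$-algebras. Multiplicativity of $\pi$ follows from $\perfdg(A) \otimes \perfdg(A') \simeq \perfdg(A \otimes A')$ and the group law in $\Br(k)$. For additivity, I would combine Tabuada's classification of noncommutative Chow motives of central simple algebras (identifying $[U(\perfdg(A))] = [U(\perfdg(B))]$ in $\NChow(k)$ with Brauer equivalence $[A]=[B]$) with a Krull--Schmidt argument applicable to CSAs (whose endomorphism rings in $\NChow(k)$ are local) to reduce additive relations to multiset equalities of Brauer classes. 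Setting $\mu_{\mathrm{JT}} := \pi \circ \mu_{\mathrm{nc}}|_{K_0\mathrm{Var}(k)^{\mathrm{tw}}}$ then gives the desired measure. The main obstacle is precisely this last verification: showing that every additive relation inherited from cut-and-paste identities in $K_0\mathrm{Var}(k)^{\mathrm{tw}}$ descends correctly to the quotient defining $R_{\mathrm{B}}(k)$. The coprime-index relation $[k] + [A\otimes A'] = [A] + [A']$ is designed precisely to absorb the ``non-Krull--Schmidt'' identities that arise from products of twisted flag varieties whose Tits algebras have coprime indices; verifying that no further relations are needed, and that these are forced by the geometry, is the delicate point of the argument.
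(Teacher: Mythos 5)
Your overall architecture coincides with the paper's: restrict the motivic measure $\mu_{\mathrm{nc}}\colon K_0\mathrm{Var}(k)\to K_0(\NChow(k))$ (Bittner plus Orlov's blow-up decomposition) to $K_0\mathrm{Var}(k)^{\mathrm{tw}}$, use the motivic decomposition $U(\perf_\dg({}_\gamma\cF))\simeq\oplus_i U(A_{\rho_i})$ coming from the semi-orthogonal decomposition of twisted flag varieties, and then identify the subring generated by the classes $[U(A)]$ with $R_{\mathrm{B}}(k)$. The gap is in the last step, which you yourself flag as ``the delicate point'' but do not carry out, and the one concrete mechanism you propose for it is wrong. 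The endomorphism rings of the objects $U(A)$ in $\NChow(k)$ are \emph{not} local: by the computation of Hom-groups between motives of separable algebras, $\End_{\NChow(k)}(U(A))\cong\bbZ$, so Krull--Schmidt is not available integrally. Indeed it genuinely fails in $\mathrm{CSA}(k)^{\oplus}$: for $(\mathrm{ind}(A),\mathrm{ind}(A'))=1$ one has $U(k)\oplus U(A\otimes A')\simeq U(A)\oplus U(A')$. If additive relations really reduced to ``multiset equalities of Brauer classes,'' the Grothendieck ring would be the free ring $\bbZ[\Br(k)]$ and the target of $\mu_{\mathrm{JT}}$ could not be the proper quotient $R_{\mathrm{B}}(k)$; your own closing remark about the coprime-index relation absorbing ``non-Krull--Schmidt identities'' contradicts the locality claim.

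Two specific things are missing. First, well-definedness of your $\pi$ on the subring $\Sigma\subset K_0(\NChow(k))$ requires more than classifying isomorphisms $\oplus_j U(A_j)\simeq\oplus_j U(A'_j)$: equality of classes in $K_0$ of an additive category only yields a \emph{stable} isomorphism $\oplus_j U(A_j)\oplus N\!\!M\simeq\oplus_j U(A'_j)\oplus N\!\!M$ with $N\!\!M$ an arbitrary noncommutative Chow motive, and one must prove a cancellation theorem to remove $N\!\!M$. The paper does this (Proposition \ref{prop:cancellation}) by passing, for each prime $p$, to $\NNum(k)_{\bbF_p}$, where $\End(U(A)_{\bbF_p})\cong\bbF_p$ \emph{is} local and the full subcategory on central simple algebras becomes equivalent to $\Br(k)\{p\}$-graded $\bbF_p$-vector spaces, for which Krull--Schmidt holds. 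Second, the correct classification (Proposition \ref{prop:key}) is not a single multiset equality but a prime-by-prime condition: $m=m'$ and for every $p$ a permutation $\sigma_p$ with $[A'_j]^p=[A_{\sigma_p(j)}]^p$; one must then verify (Lemma \ref{lem:relations}) that these relations are exactly the ones generated by $[k]+[A\otimes A']=[A]+[A']$ for coprime indices, via the $p$-primary decomposition of division algebras. Without these two ingredients your map $\pi$ is not shown to exist, so the proof is incomplete at its essential point.
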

Intuitively speaking, Theorem \ref{thm:main} shows that the Jacques Tits central simple algebras associated to a twisted projective homogeneous variety are preserved by the ``cut-and-paste'' relations. Motivated by this fact, we decided to call $\mu_{\mathrm{JT}}$ the {\em Jacques Tits motivic measure}. The proof of Theorem \ref{thm:main} makes use, among other ingredients, of the recent theory of noncommutative motives; consult \S\ref{sec:preliminaries}-\S\ref{sec:proof} below.  

\section{Applications}
In this section we describe several applications of the Jacques Tits motivic measure to Severi-Brauer varieties, twisted Grassmannian varieties, quadric hypersurfaces, and involution varieties.

\begin{notation}
Given a central simple $k$-algebra $A$, let us write $\mathrm{deg}(A)$ for its degree, $\mathrm{ind}(A)$ for its index, $\mathrm{per}(A)$ for its period, $[A]$ for its class in the Brauer group $\mathrm{Br}(k)$, and finally $\langle [A]\rangle$ for the subgroup of $\mathrm{Br}(k)$ generated by $[A]$.
\end{notation}
\subsection{Severi-Brauer varieties}\label{sub:Severi}
Let $G$ be the projective general linear group $PGL_n$, with $n\geq 2$. In this case, we have $\widetilde{G}= SL_n$. Consider the following parabolic subgroup:
\begin{eqnarray*}
\widetilde{P}:=\big \{ \begin{pmatrix} a & b \\ 0 & c \end{pmatrix}\,|\, a \cdot \mathrm{det}(c)=1\big \} \subset SL_n & a \in k^\times & c \in GL_{n-1}\,.
\end{eqnarray*}
The associated projective homogeneous variety $\cF:=G/P\simeq \widetilde{G}/\widetilde{P}$ is the projective space $\bbP^{n-1}$ and we have $R(\widetilde{P})=\oplus_{i=0}^{n-1}R(\widetilde{G})\rho_i$. Given a $1$-cocycle $\gamma\colon \Gamma \to PGL_n(\overline{k})$, let $A$ be the corresponding central simple $k$-algebra of degree $n$. Under these notations, the twisted form ${}_\gamma \bbP^{n-1}$ is the Severi-Brauer variety $\mathrm{SB}(A)$ and the Jacques Tits central simple $k$-algebra $A_{\rho_i}$ is the tensor product $A^{\otimes i}$. 
\begin{theorem}\label{thm:app1}
Let $A$ and $A'$ be two central simple $k$-algebras. If $[\mathrm{SB}(A)]=[\mathrm{SB}(A')]$ in the Grothendieck ring of varieties $K_0\mathrm{Var}(k)$, then the following holds:
\begin{itemize}
\item[(i)] We have $\mathrm{dim}(\mathrm{SB}(A))=\mathrm{dim}(\mathrm{SB}(A'))$. Equivalently, we have $\mathrm{deg}(A)=\mathrm{deg}(A')$.
\item[(ii)] We have $\langle [A] \rangle =\langle [A']\rangle$. In particular, we have $\mathrm{per}(A)=\mathrm{per}(A')$.
\item[(iii)] When $[A]\in {}_2\mathrm{Br}(k)$, where ${}_2\mathrm{Br}(k)$ is the $2$-torsion subgroup of $\mathrm{Br}(k)$, we have $\mathrm{SB}(A)\simeq \mathrm{SB}(A')$.
\item[(iv)] When $\mathrm{per}(A)\in \{3, 4, 5, 6\}$, the Severi-Brauer varieties $\mathrm{SB}(A)$ and $\mathrm{SB}(A')$ are birational to each other.
\end{itemize}
\end{theorem}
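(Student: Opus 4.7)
My plan is to derive all four parts of Theorem~\ref{thm:app1} from the Jacques Tits motivic measure $\mu_{\mathrm{JT}}$ supplied by Theorem~\ref{thm:main}. By the computation recorded just before the statement of Theorem~\ref{thm:app1},
\[ \mu_{\mathrm{JT}}([\mathrm{SB}(A)]) = \sum_{i=0}^{n-1} [A^{\otimes i}] \qquad \text{in } R_{\mathrm{B}}(k), \]
with $n = \deg(A)$, and analogously for $A'$. The hypothesis $[\mathrm{SB}(A)] = [\mathrm{SB}(A')]$ therefore becomes a single identity in $R_{\mathrm{B}}(k)$, from which I extract each of the four conclusions by projecting onto progressively finer quotients.

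For part (i), I would introduce the augmentation ring homomorphism $\epsilon\colon R_{\mathrm{B}}(k) \to \bbZ$ sending every Brauer class $[B]$ to $1$; this descends from $\bbZ[\mathrm{Br}(k)]$ because each defining relation $[k] + [B \otimes B'] - [B] - [B']$ maps to $1+1-1-1 = 0$. Applying $\epsilon$ to the identity above yields $n = n'$, i.e.\ $\deg(A) = \deg(A')$, which is equivalent to $\dim \mathrm{SB}(A) = \dim \mathrm{SB}(A')$.

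For part (ii), for each prime $p$ I would define a ring homomorphism $R_{\mathrm{B}}(k) \to \bbZ[\mathrm{Br}(k)\{p\}]$ that sends a Brauer class to its $p$-primary component. The coprime-indices relation is killed because if $\mathrm{ind}(B)$ and $\mathrm{ind}(B')$ are coprime, at most one of the two $p$-primary components is nontrivial, and the relation collapses to $0$. Writing $p^{a_p}$ for the $p$-primary part of $\mathrm{per}(A)$, the projection of $\mu_{\mathrm{JT}}([\mathrm{SB}(A)])$ equals $(n/p^{a_p})\sum_{[B] \in \langle [A_p]\rangle} [B]$, since the powers $A^{\otimes i}$ cycle through $\langle [A_p]\rangle$ with period $p^{a_p}$; the analogous formula holds for $A'$. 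Comparing supports in the free abelian group $\bbZ[\mathrm{Br}(k)\{p\}]$ forces $\langle [A_p]\rangle = \langle [A'_p]\rangle$ and $a_p = a'_p$ for every prime $p$, and taking the direct sum yields $\langle [A]\rangle = \langle [A']\rangle$ and hence $\mathrm{per}(A) = \mathrm{per}(A')$.

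For part (iii), the $2$-torsion hypothesis combined with part (ii) gives $[A'] \in \langle [A]\rangle \subseteq {}_2\mathrm{Br}(k)$; a short case analysis (treating $[A]=[k]$ and $\mathrm{per}(A)=2$ separately) leaves only $[A'] = [A]$, and together with $\deg(A) = \deg(A')$ from (i), the Wedderburn classification forces $A \simeq A'$ as $k$-algebras, whence $\mathrm{SB}(A) \simeq \mathrm{SB}(A')$. For part (iv), parts (i) and (ii) supply precisely the two numerical inputs $\deg(A)=\deg(A')$ and $\langle [A]\rangle=\langle [A']\rangle$ appearing in the hypotheses of Amitsur's birational conjecture for Severi-Brauer varieties; this conjecture is known unconditionally in the period range $\{3,4,5,6\}$, and invoking the appropriate known case yields the desired birational equivalence. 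I expect the main technical obstacle to be the verification in (ii) that the $p$-primary projection $R_{\mathrm{B}}(k) \to \bbZ[\mathrm{Br}(k)\{p\}]$ is a well-defined ring homomorphism, together with the careful combinatorics needed to recover the full subgroup $\langle [A]\rangle$ from the image of $\mu_{\mathrm{JT}}$.
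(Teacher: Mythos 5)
Your proposal is correct, and items (i) and (iii) follow the paper's own route: your augmentation $\epsilon$ is precisely the paper's measure $\mu_\rho$ (obtained by post-composing $\mu_{\mathrm{JT}}$ with the augmentation of $R_{\mathrm{B}}(k)$), and (iii) is the same two-line deduction from (i) and (ii). The genuinely different step is item (ii). The paper obtains $\langle[A]\rangle=\langle[A']\rangle$ from Proposition \ref{prop:auxiliar1}(ii), whose proof passes through noncommutative motives: the equality of $\mu_{\mathrm{JT}}$-values is first upgraded to an isomorphism $U(\perf_\dg(\mathrm{SB}(A)))\simeq U(\perf_\dg(\mathrm{SB}(A')))$ in $\NChow(k)$ via the cancellation Proposition \ref{prop:cancellation} (which needs the Krull--Schmidt argument in $\NNum(k)_{\bbF_p}$), and then Corollary \ref{cor:key} is invoked. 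You instead stay entirely inside $R_{\mathrm{B}}(k)$: the $p$-primary projection $\bbZ[\mathrm{Br}(k)]\to\bbZ[\mathrm{Br}(k)\{p\}]$ is indeed a ring homomorphism killing the defining relations (coprimality of the indices forces one of the two $p$-parts to vanish), and since $\mathrm{per}(A)\mid\mathrm{ind}(A)\mid\deg(A)$ your formula $(n/p^{a_p})\sum_{[B]\in\langle[A]^p\rangle}[B]$ is legitimate, so comparing supports in the free abelian group gives $\langle[A]^p\rangle=\langle[A']^p\rangle$ for every $p$, whence $\langle[A]\rangle=\langle[A']\rangle$ by the primary decomposition of cyclic groups. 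This is more elementary and self-contained for Severi--Brauer varieties, exploiting that the Tits classes of $\mathrm{SB}(A)$ sweep out the full subgroup $\langle[A]\rangle$ with uniform multiplicity; the paper's heavier route buys the general Proposition \ref{prop:auxiliar1}, which is needed for the other applications (products of conics, quadrics, involution varieties) where no such uniformity holds. The one place you are looser than the paper is item (iv): the assertion that the Amitsur conjecture ``is known unconditionally'' for periods $3$--$6$ should be unpacked as in the paper, namely $\langle[A]\rangle=\langle[A']\rangle$ forces $[A']=u[A]$ with $u$ a unit modulo $\mathrm{per}(A)$, so $u=\pm1$ for periods $3,4,6$ and $u\in\{\pm1,\pm2\}$ for period $5$; the case $u=1$ gives an isomorphism, $u=-1$ is Roquette's theorem, and $u=\pm2$ is Tregub's.
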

Note that item (i), resp. item (ii), shows that the dimension of a Severi-Brauer variety, resp. the subgroup generated by the Brauer class, is preserved by the ``cut-and-paste'' relations. Item (iii) shows that when the Brauer class is $2$-torsion (i.e., when $\mathrm{per}(A)\in \{1,2\}$), two Severi-Brauer varieties have the same Grothendieck class in $K_0\mathrm{Var}(k)$ if and only if they are isomorphic! In other words, item (iii) yields the following inclusion:
\begin{equation}\label{eq:injection}
\frac{\{\text{Severi-Brauer}\,\,\text{varieties}\,\,\mathrm{SB}(A)\,\,\text{with}\,\,[A]\in{}_2\mathrm{Br}(k)\}}{\text{isomorphism}} \subset K_0\mathrm{Var}(k) \,.
\end{equation}
Note that thanks to the Artin-Wedderburn theorem, the left-hand side of \eqref{eq:injection} is in bijection with ${}_2\mathrm{Br}(k)\times \bbN$ via the assignment $\mathrm{SB}(A) \mapsto ([A],\mathrm{deg}(A))$. Note also that by restricting the inclusion \eqref{eq:injection} to central simple $k$-algebras of degree $2$, i.e., to quaternion algebras $Q=(a,b)$, we obtain the following inclusion
\begin{equation}\label{eq:injection1}
\frac{\{C(a,b):=(ax^2+ by^2-z^2=0) \subset \bbP^2\,|\,a,b \in k^\times\}}{\text{isomorphism}} \subset K_0\mathrm{Var}(k) \,,
\end{equation}
where $C(a,b)$ stands for the smooth conic associated to the quaternion algebra $Q$.
\begin{example}[Conics over $\bbQ$]\label{ex:coefficients-1}
When $k=\bbQ$, there are infinitely many smooth conics in $\bbP^2$ up to isomorphism. For example, given any two primes numbers $p\neq q$ which are congruent to $3$ modulo $4$, the conics $C(-1,p)$ and $C(-1,q)$ are {\em not} isomorphic. Consequently, since there are infinitely many prime numbers $p$ which are congruent to $3$ modulo $4$, the inclusion \eqref{eq:injection1} yields the following {\em infinite} family of {\em distinct} Grothendieck classes $\{[(-x^2+py^2 - z^2 =0)]\}_{p \equiv 3\,(\text{mod}\,4)} \subset K_0\mathrm{Var}(\bbQ)$.
\end{example}
Finally, item (iv) shows that, for small values of the period, if two Severi-Brauer varieties have the same Grothendieck class in $K_0\mathrm{Var}(k)$ then they are necessarily birational to each other. 
\begin{remark}[Severi-Brauer surfaces]
Let $A$ and $A'$ be two central simple $k$-algebras of degree $3$ (and hence of period $3$). In this particular case, Hogadi proved in \cite[Thm.~1.2]{Hogadi}, using different arguments, that if $[\mathrm{SB}(A)]=[\mathrm{SB}(A')]$ in the Grothendieck ring of varieties $K_0\mathrm{Var}(k)$, then the Severi-Brauer varieties $\mathrm{SB}(A)$ and $\mathrm{SB}(A')$ are birational to each other. Note that, in contrast with Hogadi, in item (iv) we do {\em not} impose any restriction on the degree (only on the period).
\end{remark}
\begin{remark}[Amitsur conjecture]\label{rk:Amitsur}
Let $A$ and $A'$ be two central simple $k$-algebras with the same degree. In the fifties, Amitsur \cite{Amitsur} conjectured that if $\langle [A]\rangle = \langle [A']\rangle$, then the Severi-Brauer varieties $\mathrm{SB}(A)$ and $\mathrm{SB}(A')$ are birational to each other. Consequently, if $[\mathrm{SB}(A)]=[\mathrm{SB}(A')]$ in the Grothendieck ring of varieties $K_0\mathrm{Var}(k)$, we obtain from item (ii) the following variant of item (iv):

\smallskip

\begin{itemize}
\item[(iv')] {\it When the Amitsur conjecture holds, the varieties $\mathrm{SB}(A)$ and $\mathrm{SB}(A')$ are birational to each other.}
\end{itemize}

\smallskip

The Amitsur conjecture holds, for example, when $k$ is a local or global field or when $\mathrm{ind}(A)< \mathrm{deg}(A)$. 
\end{remark}
\begin{remark}[Stable birationality]
Let $A$ and $A'$ be two central simple $k$-algebras with the same degree. It is well-known that if $\langle [A]\rangle = \langle [A']\rangle$, then the Severi-Brauer varieties $\mathrm{SB}(A)$ and $\mathrm{SB}(A')$ are stably birational to each other; consult, for example, \cite[Rk.~5.4.3]{Gille}. Consequently, if $[\mathrm{SB}(A)]=[\mathrm{SB}(A')]$ in the Grothendieck ring of varieties $K_0\mathrm{Var}(k)$, we obtain from item (ii) the following variant of item (iv):

\smallskip

\begin{itemize}
\item[(iv'')] {\it The Severi-Brauer varieties $\mathrm{SB}(A)$ and $\mathrm{SB}(A')$ are stably birational to each other.}
\end{itemize}
\end{remark}
\subsection{Products of conics}
Recall that two quaternion $k$-algebras $Q$ and $Q'$ are called {\em unlinked} in the sense of Albert \cite{Albert} if their tensor product $Q \otimes Q'$ is a division $k$-algebra.
\begin{proposition}\label{prop:app2}
Let $Q=(a,b)$, $Q'=(a',b')$, $Q''=(a'',b'')$, $Q'''=(a''',b''')$ be four quaternion algebras. If $[C(a,b) \times C(a',b')]=[C(a'',b'')\times C(a''',b''')]$ in $K_0\mathrm{Var}(k)$, then the following holds:
\begin{itemize}
\item[(i)] The conic $C(a,b)$ (or $C(a',b')$) is isomorphic to $C(a'',b'')$ or to $C(a''',b''')$.
\item[(ii)] When $Q$ and $Q'$ are unlinked, we have $C(a,b) \times C(a',b')\simeq C(a'',b'') \times C(a''',b''')$.
\end{itemize}
\end{proposition}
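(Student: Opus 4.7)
The plan is to apply the Jacques Tits motivic measure $\mu_{\mathrm{JT}}$ of Theorem~\ref{thm:main} to the hypothesis. Each conic $C(a,b)$ is the Severi-Brauer variety $\mathrm{SB}(Q)$ of the quaternion algebra $Q=(a,b)$, so both products of conics lie in $K_0\mathrm{Var}(k)^{\mathrm{tw}}$. Combining the Severi-Brauer computation of Subsection~\ref{sub:Severi} with the multiplicativity of $\mu_{\mathrm{JT}}$ (equivalently, invoking the Steinberg basis for the product parabolic in $SL_2\times SL_2$), I would evaluate
\[
\mu_{\mathrm{JT}}([C(a,b)\times C(a',b')])=([k]+[Q])([k]+[Q'])=[k]+[Q]+[Q']+[Q\otimes Q']
\]
in $R_{\mathrm{B}}(k)$, and analogously on the right-hand side. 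The hypothesis then reduces to the equality
\[
[Q]+[Q']+[Q\otimes Q']=[Q'']+[Q''']+[Q''\otimes Q''']\qquad\text{in }R_{\mathrm{B}}(k).
\]

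All six summands lie in the $2$-primary subgroup $\mathrm{Br}(k)\{2\}$; since two nontrivial $2$-torsion Brauer classes have $2$-power index and are therefore never coprime, the defining relations of $R_{\mathrm{B}}(k)$ impose no non-trivial identification among elements of $\mathrm{Br}(k)\{2\}$. Concretely, the plan is to verify this by showing that the projection $\pi_2\colon\mathbb{Z}[\mathrm{Br}(k)]\to\mathbb{Z}[\mathrm{Br}(k)\{2\}]$ onto the $2$-primary part kills every defining relation of $R_{\mathrm{B}}(k)$ and hence provides a retraction of the canonical map $\mathbb{Z}[\mathrm{Br}(k)\{2\}]\to R_{\mathrm{B}}(k)$. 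The displayed equality therefore lifts to an identity in $\mathbb{Z}[\mathrm{Br}(k)\{2\}]$, equivalently to a \emph{multiset equality}
\[
\{[Q],[Q'],[Q\otimes Q']\}=\{[Q''],[Q'''],[Q''\otimes Q''']\}.
\]
This lifting step is the only genuinely non-formal point in the argument.

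For part~(i), I would enumerate the (at most) six bijections realising the multiset equality; a direct case inspection shows that each of them forces at least one of $[Q],[Q']$ to coincide with one of $[Q''],[Q''']$. Since two quaternion $k$-algebras are isomorphic if and only if their Brauer classes agree, this yields the required isomorphism between one of the two conics on the left and one of the two conics on the right. For part~(ii), the unlinked hypothesis forces $Q\otimes Q'$ to be a division algebra of degree $4$, so $[Q\otimes Q']$ has index $4$ and therefore cannot equal any of $[k],[Q''],[Q''']$, each of which has index at most $2$. The multiset bijection must then pair $[Q\otimes Q']$ with $[Q''\otimes Q''']$, leaving $\{[Q],[Q']\}=\{[Q''],[Q''']\}$; translating back via the Brauer-class classification of quaternion algebras (and using that $\times$ is commutative on varieties) yields $C(a,b)\times C(a',b')\simeq C(a'',b'')\times C(a''',b''')$, as required.
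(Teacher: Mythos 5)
Your argument is correct, but it reaches the key Brauer-theoretic input by a genuinely different route than the paper. The paper does not expand $\mu_{\mathrm{JT}}$ of the product explicitly; instead it invokes Proposition \ref{prop:auxiliar1}(ii) --- which rests on the noncommutative-motives cancellation Proposition \ref{prop:cancellation} and Corollary \ref{cor:key} --- to obtain only the equality of generated subgroups $\langle [Q],[Q']\rangle=\langle [Q''],[Q''']\rangle$ in $\mathrm{Br}(k)$, and then runs an elementary analysis of generating sets of an $\bbF_2$-vector space of dimension $\leq 2$; for item (ii) the unlinked hypothesis forces the dimension to be exactly $2$ and excludes $[Q\otimes Q']$, of index $4$, from being one of the index-$\leq 2$ generators $[Q''],[Q''']$, whence $\{[Q],[Q']\}=\{[Q''],[Q''']\}$. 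You instead compute $\mu_{\mathrm{JT}}([C(a,b)\times C(a',b')])=[k]+[Q]+[Q']+[Q\otimes Q']$ using multiplicativity and lift the resulting identity in $R_{\mathrm{B}}(k)$ to a multiset equality in $\bbZ[\mathrm{Br}(k)\{2\}]$ via the retraction $\pi_2$. That retraction argument is sound: in each defining relation of $R_{\mathrm{B}}(k)$ the two algebras have coprime indexes, so at most one has nontrivial $2$-primary part and $\pi_2$ sends the relation to zero; hence $\bbZ[\mathrm{Br}(k)\{2\}]\to R_{\mathrm{B}}(k)$ is split injective and the identity lifts. The multiset equality $\{[Q],[Q'],[Q\otimes Q']\}=\{[Q''],[Q'''],[Q''\otimes Q''']\}$ you extract is strictly finer than the paper's subgroup equality, and your case analyses for (i) and (ii) are routine and correct (for (i), no matching of the two three-element multisets can send both $[Q]$ and $[Q']$ to the single slot $[Q''\otimes Q''']$). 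The trade-off: your route is self-contained given Theorem \ref{thm:main} and bypasses Proposition \ref{prop:auxiliar1} entirely, at the cost of a small amount of group-ring bookkeeping; the paper's route reuses machinery it needs for its other applications anyway.
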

Note that item (i) shows that if two products of conics have the same Grothendieck class in $K_0\mathrm{Var}(k)$, then they necessarily share a common conic! Moreover, item (ii) provides a sufficient condition for these two products of conics to be isomorphic.
\begin{example}[Unlinked quaternion algebras]\label{ex:two-variables}
When $k=\bbR(x,y)$ is the field of rational functions on two variables over $\bbR$, the quaternion algebras $(-1,-1)$ and $(x,y)$, as well as the quaternion algebras $(x,-1)$ and $(-x,y)$, are unlinked; consult \cite[\S VI Examples 1.11 and 1.13]{Lam}. In the same vein, when $k= \bbQ(x,y)$ is the field of rational functions on two variables over $\bbQ$, the quaternion algebras $(a,x)$ and $(b,y)$, where $a, b \in k^\times$ represent two independent square classes in $\bbQ^\times/(\bbQ^\times)^2$, are unlinked; consult \cite[\S VI Example 1.15]{Lam}. Further examples exist for every field $k$ with $u$-invariant equal to $6$ or $>8$; consult \cite[\S XIII]{Lam}.
\end{example}
\begin{remark}[Birationality]
Let $k$ be a number field or the function field of an algebraic surface over $\bbC$. Given quaternion algebras $Q=(a,b)$, $Q'=(a',b')$, $Q''=(a'',b'')$, and $Q'''=(a''',b''')$, Koll\'ar\footnote{In subsequent work, Hogadi \cite{Hogadi} removed these restrictions on the base field $k$.} proved in \cite[Thm.~2]{Kollar} that if $[C(a,b) \times C(a',b')]=[C(a'',b'')\times C(a''',b''')]$ in the Grothendieck group of varieties $K_0\mathrm{Var}(k)$, then the products $C(a,b) \times C(a',b')$ and $C(a'',b'') \times C(a''',b''')$ are birational to each other. Hence, the above Proposition \ref{prop:app2} may be understood as a refinement of Kollar's result.
\end{remark}

\subsection{Twisted Grassmannian varieties}\label{sub:Grass}
Let $G=PGL_n$, with $n\geq 2$. Recall that in this case we have $\widetilde{G}= SL_n$. Choose an integer $1\leq d < n$ and consider the following parabolic subgroup:
\begin{eqnarray*}
\widetilde{P}:=\big \{ \begin{pmatrix} a & b \\ 0 & c \end{pmatrix}\,|\, \mathrm{det}(a) \cdot \mathrm{det}(c)=1 \big\} \subset SL_n & a \in GL_d & c \in GL_{n-d}\,.
\end{eqnarray*}
The associated projective homogeneous variety $\cF:=G/P\simeq \widetilde{G}/\widetilde{P}$ is the Grassmannian variety $\mathrm{Gr}(d)$ and we have $R(\widetilde{P})=\oplus_{i}R(\widetilde{G})\rho_i$ where $i=(i_1, \ldots, i_d)$ is a Young diagram inside the rectangle with $d$ lines and $n-d$ columns. Given a $1$-cocycle $\gamma\colon \Gamma \to PGL_n(\overline{k})$, let $A$ be the corresponding central simple $k$-algebra of degree $n$. Under these notations, the twisted form ${}_\gamma \mathrm{Gr}(d)$ is the twisted Grassmannian variety $\mathrm{Gr}(d; A)$ and the Jacques Tits central simple $k$-algebra $A_{\rho_i}$ is the tensor product $A^{\otimes (i_1+ \cdots + i_d)}$.
\begin{remark}[Generalization]\label{rk:general}
Note that in the particular case where $d=1$, the twisted Grassmannian variety $\mathrm{Gr}(d;A)$ reduces to the Severi-Brauer variety $\mathrm{SB}(A)$.
\end{remark}
\begin{theorem}\label{thm:app3}
Let $A$ and $A'$ be two central simple $k$-algebras and $1\leq d < \mathrm{deg}(A)$ and $1\leq d' < \mathrm{deg}(A')$. If $[\mathrm{Gr}(d;A)]=[\mathrm{Gr}(d';A')]$ in the Grothendieck ring of varieties $K_0\mathrm{Var}(k)$, then the following holds:
\begin{itemize}
\item[(i)] We have $\mathrm{dim}(\mathrm{Gr}(d;A))=\mathrm{dim}(\mathrm{Gr}(d';A'))$. Moreover, we have $\mathrm{deg}(A)=\mathrm{deg}(A')$.
\item[(ii)] We have $\langle [A] \rangle =\langle [A']\rangle$. In particular, we have $\mathrm{per}(A)=\mathrm{per}(A')$.
\item[(iii)] When $[A]\in {}_2\mathrm{Br}(k)$, we have $\mathrm{Gr}(d;A)\simeq \mathrm{Gr}(d';A')$.
\end{itemize}
\end{theorem}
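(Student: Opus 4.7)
The plan is to combine the Jacques Tits motivic measure of Theorem \ref{thm:main} with base-change to $\bar{k}$. For (i), after base-changing both sides of $[\mathrm{Gr}(d;A)]=[\mathrm{Gr}(d';A')]$ to $\bar{k}$, the Schubert cell decomposition identifies each side with a Gaussian binomial $\binom{n}{d}_{\bbL}$, resp.~$\binom{n'}{d'}_{\bbL}$, inside $K_0\mathrm{Var}(\bar{k})$, where $n=\deg A$ and $n'=\deg A'$. Composing further with the Hodge polynomial motivic measure $\bbL \mapsto uv$ yields the polynomial identity $\binom{n}{d}_{uv}=\binom{n'}{d'}_{uv}$ in $\bbZ[u,v]$, and standard factorization of Gaussian binomials recovers the multiset equality $\{d, n-d\} = \{d', n'-d'\}$; in particular $n = n'$, so $\deg A = \deg A'$ and $\dim \mathrm{Gr}(d;A) = \dim \mathrm{Gr}(d';A')$. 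Invoking the identification $\mathrm{Gr}(n-d;A')\simeq \mathrm{Gr}(d;A'^{\mathrm{op}})$ (and $\langle [A'^{\mathrm{op}}]\rangle = \langle [A']\rangle$) lets me replace $A'$ by its opposite if necessary and assume $d=d'$ for the remainder of the argument.

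For (ii), apply $\mu_{\mathrm{JT}}$. Writing $N_d(j)$ for the number of Young diagrams of weight $j$ inside the $d \times (n-d)$ rectangle, grouping terms by weight gives
$$\sum_{j=0}^{d(n-d)} N_d(j)\,[A^{\otimes j}] \;=\; \sum_{j=0}^{d(n-d)} N_d(j)\,[A'^{\otimes j}] \qquad \text{in } R_{\mathrm{B}}(k).$$
The key point is that for each prime $q$, the $q$-primary projection $\mathrm{Br}(k)\to \mathrm{Br}(k)\{q\}$ extends to a ring homomorphism $\pi_q\colon R_{\mathrm{B}}(k) \to \bbZ[\mathrm{Br}(k)\{q\}]$: the defining relations of $R_{\mathrm{B}}(k)$ involve algebras of coprime indices, so in any fixed primary component at least one factor becomes trivial and the relation collapses to zero. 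After applying $\pi_q$ the equation becomes an honest equality of formal sums in the free $\bbZ$-module $\bbZ[\mathrm{Br}(k)\{q\}]$, so the two supports coincide. Using $\mathrm{per}(A_q)\leq n \leq d(n-d)+1$ for $1\leq d<n$, the exponents $0,1,\ldots,d(n-d)$ exhaust every residue modulo $\mathrm{per}(A_q)$, so the support of $\pi_q\mu_{\mathrm{JT}}(\mathrm{Gr}(d;A))$ is exactly $\langle [A_q]\rangle$. Equating supports yields $\langle [A_q]\rangle = \langle [A'_q]\rangle$ for every prime $q$, which via $\mathrm{Br}(k)=\bigoplus_q \mathrm{Br}(k)\{q\}$ assembles into $\langle [A]\rangle = \langle [A']\rangle$, proving (ii).

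For (iii), assume $[A]\in {}_2\mathrm{Br}(k)$; (ii) then forces $[A']\in {}_2\mathrm{Br}(k)$ as well, so only $\pi_2$ carries information. Using $[A^{\otimes j}]=[k]$ for $j$ even and $[A^{\otimes j}]=[A]$ for $j$ odd (and similarly for $A'$), the equation in $\bbZ[\mathrm{Br}(k)\{2\}]$ simplifies and cancellation of the $[k]$-terms leaves
$$\Big(\sum_{j\text{ odd}} N_d(j)\Big)\,[A] \;=\; \Big(\sum_{j\text{ odd}} N_d(j)\Big)\,[A'].$$
Since the Young diagram $(1,0,\ldots,0)$ contributes to the odd-weight sum, one has $\sum_{j\text{ odd}} N_d(j) > 0$, hence $[A]=[A']$ in $\mathrm{Br}(k)$. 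Combined with $\deg A=\deg A'$ from (i), Artin-Wedderburn delivers $A\simeq A'$ and therefore $\mathrm{Gr}(d;A)\simeq \mathrm{Gr}(d;A')$. The most delicate step is the well-definedness of the primary projections $\pi_q$ together with the combinatorial verification that the support in $\bbZ[\mathrm{Br}(k)\{q\}]$ really exhausts the full cyclic subgroup $\langle [A_q]\rangle$; this is where the rectangular Young-diagram combinatorics genuinely enters, and is the part I would expect to require the most care.
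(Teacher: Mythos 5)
Your argument is correct, but it takes a genuinely different route from the paper's in both of its main steps, so a comparison is in order. For item (i) the paper applies only the augmentation measure $\mu_\rho$, obtaining $\binom{\deg A}{d}=\binom{\deg A'}{d'}$ and reading off $\deg A=\deg A'$ and $d'\in\{d,\deg A-d\}$ ``by definition of the binomial coefficients''; your passage to $\bar{k}$ and to the Gaussian binomial $\binom{n}{d}_{\bbL}$, followed by a realization separating the powers of $\bbL$, is sharper (equality of ordinary binomial coefficients alone does not pin down $n$ and $\{d,n-d\}$ — e.g.\ $\binom{16}{2}=\binom{10}{3}$ — whereas the cyclotomic factorization of the Gaussian binomial does), at the modest cost of knowing that $1,\bbL,\bbL^2,\dots$ are linearly independent in $K_0\mathrm{Var}(\bar{k})$. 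For items (ii)--(iii) the paper never manipulates $R_{\mathrm{B}}(k)$ directly: it re-enters the category of noncommutative Chow motives via Proposition \ref{prop:auxiliar1}, whose proof rests on the cancellation Proposition \ref{prop:cancellation} (numerical motives, Krull--Schmidt) and on Corollary \ref{cor:key}. You instead extract everything from the target ring itself through the primary projections $\pi_q\colon R_{\mathrm{B}}(k)\to\bbZ[\mathrm{Br}(k)\{q\}]$; your check that the defining relations die under $\pi_q$ (one of the two coprime-index factors has trivial $q$-primary class) is exactly right, and the support computation — every weight $0\le j\le d(n-d)$ is realized by some Young diagram in the $d\times(n-d)$ rectangle, and these exponents cover all residues modulo $\mathrm{per}([A]^q)\le n\le d(n-d)+1$ — is the genuinely new combinatorial input. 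What this buys is a self-contained, purely group-ring-theoretic deduction of (ii) and (iii) once Theorem \ref{thm:main} is granted; what the paper's route buys is uniformity, since Proposition \ref{prop:auxiliar1} applies verbatim to all twisted flag varieties and their products. Item (iii) then closes identically in both treatments: $\langle[A]\rangle=\langle[A']\rangle$ together with $2$-torsion gives $[A]=[A']$, and Artin--Wedderburn plus $\deg A=\deg A'$ gives $A\simeq A'$ (your displayed cancellation should be read as a comparison of coefficients in the free module, to cover the degenerate case $[A]=[k]$, but the conclusion is unaffected).
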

Note that, similarly to Theorem \ref{thm:app1}, item (iii) shows that when the Brauer class is $2$-torsion, two twisted Grassmannian varieties have the same Grothendieck class in $K_0\mathrm{Var}(k)$ if and only if they are isomorphic! In other words, item (iii) yields the following inclusion:
\begin{equation}\label{eq:injection3}
\frac{\{\text{Twisted}\,\,\text{Grassmannian}\,\,\text{varieties}\,\,\mathrm{Gr}(d;A)\,\,\text{with}\,\,[A]\in {}_2\mathrm{Br}(k)\}}{\text{isomorphism}} \subset K_0\mathrm{Var}(k)\,.
\end{equation}
Following Remark \ref{rk:general}, note that \eqref{eq:injection3} extends the above inclusion \eqref{eq:injection}.
\subsection{Quadric hypersurfaces}\label{sub:quadrics}
Assume that $\mathrm{char}(k)\neq 2$. Let $G$ be the special orthogonal group $SO_n$, with $n \geq 3$, with respect to the hyperbolic form $\frac{n}{2}\mathbb{H}$ when $n$ is even or to the form $\lfloor{\frac{n}{2}\rfloor}\mathbb{H}\! \perp \!\langle 1\rangle$ when $n$ is odd. In this case, we have $\widetilde{G}=\mathrm{Spin}_n$. Consider the action of $G$ on $\bbP^{n-1}$ given by projective linear transformations, the stabilizer $P\subset G$ of the isotropic point $[1:0:\cdots:0]$, and the pre-image $\widetilde{P}\subset \widetilde{G}$ of $P$. The associated projective homogeneous variety $\cF:=G/P\simeq \widetilde{G}/\widetilde{P}$ is the following smooth quadric hypersurface
\begin{eqnarray*}
Q:=\begin{cases} 
 (x_1y_1 + \cdots + x_{\frac{n}{2}}y_{\frac{n}{2}}=0) \subset \bbP^{n-1} & \quad n\,\,\mathrm{even} \\
 (x_1y_1 + \cdots + x_{\lfloor\frac{n}{2}\rfloor}y_{\lfloor\frac{n}{2}\rfloor} +z^2=0)  \subset \bbP^{n-1} & \quad n\,\,\mathrm{odd} \\
\end{cases}
\end{eqnarray*}
and we have $R(\widetilde{P})=\oplus_{i=0}^{n-1}R(\widetilde{G})\rho_i$ when $n$ is even or  $R(\widetilde{P})=\oplus_{i=0}^{n-2}R(\widetilde{G})\rho_i$ when $n$ is odd. Given a $1$-cocycle $\gamma\colon \Gamma \to SO_n(\overline{k})$, let $q$ be the corresponding non-degenerate quadratic form with trivial discriminant of dimension $n$. Under these notations, the twisted form ${}_\gamma Q$ is the smooth quadric hypersurface $Q_q \subset \bbP^{n-1}$ and the Jacques Tits central simple $k$-algebra $A_{\rho_i}$ is given as follows
\begin{eqnarray*}
A_{\rho_i}:=\begin{cases} k & 0 \leq i \leq n-3 \quad n\,\,\mathrm{even}\\
 C^+_0(q)& i=n-2 \quad n\,\,\mathrm{even}\\
C^-_0(q) & i=n-1 \quad n\,\,\mathrm{even}
\end{cases}
&&
A_{\rho_i}:=\begin{cases} 
k & 0 \leq i \leq n-3 \quad n\,\,\mathrm{odd}\\
C_0(q) & i=n-2 \quad n\,\,\mathrm{odd}\,,
\end{cases}  
\end{eqnarray*}
where $C_0(q)$ stands for the even Clifford algebra of $q$ and $C_0^+(q)$ and $C_0^-(q)$ for the (isomorphic) simple components of $C_0(q)$.
\begin{theorem}\label{thm:app4}
Let $q$ and $q'$ be two non-degenerate quadratic forms with trivial discriminant of dimensions $n$ and $n'$, respectively. If $[Q_q]=[Q_{q'}]$ in the Grothendieck ring of varieties $K_0\mathrm{Var}(k)$, then the following~holds:
\begin{itemize}
\item[(i)] We have $\mathrm{dim}(Q_q)=\mathrm{dim}(Q_{q'})$. Equivalently, we have $n=n'$.
\item[(ii)] We have $C^+_0(q)\simeq C^+_0(q')$ when $n$ is even or $C_0(q)\simeq C_0(q')$ when $n$ is odd.
\item[(iii)] When $n=6$, we have $Q_q\simeq Q_{q'}$.
\item[(iv)] When $I^3(k)=0$, where $I(k)\subset W(k)$ is the fundamental ideal of the Witt ring, we have $Q_q\simeq Q_{q'}$.
\end{itemize}
\end{theorem}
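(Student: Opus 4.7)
The strategy is to apply the Jacques Tits motivic measure $\mu_{\mathrm{JT}}$ from Theorem~\ref{thm:main} to the equation $[Q_q] = [Q_{q'}]$, reducing the geometric statement to one about Brauer classes of Clifford algebras. For part~(i), I would first pass to the algebraic closure $\overline{k}$, where both $Q_q$ and $Q_{q'}$ become split smooth quadrics. A split smooth quadric of dimension $d$ admits an explicit cellular decomposition, so that its class in $K_0\mathrm{Var}(\overline{k})$ equals $1+\bbL+\cdots+\bbL^d$ (plus an extra $\bbL^{d/2}$ when $d$ is even); in particular the dimension is readable from the class. Hence $[Q_q]=[Q_{q'}]$ forces $\dim Q_q = \dim Q_{q'}$, i.e.\ $n = n'$.

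Applying $\mu_{\mathrm{JT}}$ and using $A_{\rho_i}=k$ for all $i$ except the top one or two --- together with $C_0^+(q)\simeq C_0^-(q)$ in the even case and $n=n'$ from~(i) --- the identity $\mu_{\mathrm{JT}}([Q_q])=\mu_{\mathrm{JT}}([Q_{q'}])$ simplifies to $2[C_0^+(q)] = 2[C_0^+(q')]$ when $n$ is even and to $[C_0(q)] = [C_0(q')]$ when $n$ is odd, both in $R_{\mathrm{B}}(k)$. To extract equalities of Brauer classes in $\mathrm{Br}(k)$, I would exploit that $\gcd(\mathrm{ind}(A),\mathrm{ind}(A'))=1$ forces $A$ and $A'$ to be supported on disjoint sets of primes; consequently the projection $\mathrm{Br}(k)\to \mathrm{Br}(k)\{2\}$ onto the $2$-primary part extends to a well-defined ring homomorphism $R_{\mathrm{B}}(k)\to\bbZ[\mathrm{Br}(k)\{2\}]$ which is the identity on $2$-primary generators. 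Since multiplication by $2$ is injective on the free abelian group $\bbZ[\mathrm{Br}(k)\{2\}]$, both displayed equations yield equality of Brauer classes; combined with equality of degrees (from~(i)), Artin--Wedderburn produces $C_0^+(q)\simeq C_0^+(q')$ or $C_0(q)\simeq C_0(q')$, proving~(ii).

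For~(iii), I would invoke the Klein correspondence in its twisted form: the exceptional isomorphism $\mathrm{Spin}_6 \simeq \mathrm{SL}_4$ realises any $6$-dimensional quadric with trivial discriminant $Q_q$ as the twisted Grassmannian $\mathrm{Gr}(2;\,C_0^+(q))$ (see \cite[\S15]{Book-inv}). Combined with~(ii), this gives $Q_q\simeq Q_{q'}$. For~(iv), I would appeal to Merkurjev's theorem that the Clifford invariant $c\colon I^2(k)/I^3(k) \isotoo {}_2\mathrm{Br}(k)$ is an isomorphism: two non-degenerate forms of equal dimension with trivial discriminant differ in $I^2(k)$, and by~(ii) they have equal Clifford invariant, so their difference lies in $I^3(k)=0$; Witt cancellation then yields $q\simeq q'$ and hence $Q_q\simeq Q_{q'}$. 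The main obstacle I foresee is the $R_{\mathrm{B}}(k)$-analysis of paragraph two, since the remaining inputs --- the Klein correspondence and Merkurjev's theorem --- are classical and apply routinely once the Brauer-class equality is in hand.
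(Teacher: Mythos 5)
Your proposal is correct and reaches all four conclusions, but it diverges from the paper's argument at three of the four steps, in each case legitimately. For (i) the paper never base-changes to $\overline{k}$: it composes $\mu_{\mathrm{JT}}$ with the augmentation of $R_{\mathrm{B}}(k)$ to obtain the measure $\mu_\rho$ counting Tits algebras, which equals $n$ for $n$ even and $n-1$ for $n$ odd. Your route through the cellular decomposition of the split quadric is equally valid, though to make ``the dimension is readable from the class'' precise you should apply a realization such as the virtual Poincar\'e polynomial (whose degree is $2\dim Q_q$) --- the Euler characteristic alone does not separate an even-dimensional split quadric from the odd-dimensional one of dimension one higher. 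For (ii) the paper deduces $\langle[C_0^+(q)]\rangle=\langle[C_0^+(q')]\rangle$ from Proposition \ref{prop:auxiliar1}(ii), whose proof runs through the cancellation Proposition \ref{prop:cancellation} in $\NChow(k)$ and Corollary \ref{cor:key}; your alternative --- observing that the defining relations of $R_{\mathrm{B}}(k)$ die under the $2$-primary projection (algebras of coprime index cannot both have nontrivial $2$-part), so that one obtains a ring homomorphism $R_{\mathrm{B}}(k)\to\bbZ[\mathrm{Br}(k)\{2\}]$, and then dividing by $2$ in a free abelian group --- is a genuine shortcut that bypasses the cancellation machinery and uses only Theorem \ref{thm:main}. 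Item (iii) is essentially the same in both treatments, namely the $A_3=D_3$ correspondence of \cite{Book-inv}, whether phrased as ``isomorphic $C_0^+$ implies similar forms'' or as the Klein-correspondence identification of $Q_q$ with a twisted Grassmannian of $C_0^+(q)$. For (iv) the paper simply cites the Elman--Lam classification theorem \cite{ELam}; your rederivation from Merkurjev's theorem is fine, though stronger than needed, and in the odd-dimensional case a little extra bookkeeping is required to place the difference of the two forms in $I^2(k)$ and to match $[C_0(q)]$ with the Clifford invariant.
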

Note that item (i), resp. item (ii), shows that the dimension of the quadric hypersurface, resp. the Brauer class of the (simple components of the) even Clifford algebra, is preserved by the ``cut-and-paste'' relations. Item (iii) shows that when the dimension is equal to $6$, two quadric hypersurfaces have the same Grothendieck class in $K_0\mathrm{Var}(k)$ if and only if they are isomorphic! Recall from \cite[\S16.4]{Book-inv} that, up to similarity, a non-degenerate quadratic form $q$ with trivial discriminant of dimension $6$ is given by $\langle a, b, -ab, -a', -b', a'b'\rangle$ with $a, b, a', b' \in k^\times$. Therefore, item (iii) yields the following inclusion:
\begin{equation}\label{eq:injection-dim4}
\frac{\{Q_q:=(au^2 + bv^2 -abw^2 - a'x^2 -b'y^2 + a'b'z^2=0)\subset \bbP^5\,|\,a, b, a', b' \in k^\times\}}{\text{isomorphism}} \subset K_0\mathrm{Var}(k) \,.
\end{equation}
\begin{example}[Quadric hypersurfaces over $\bbQ$]\label{ex:coefficients-2}
When $k=\bbQ$, there are infinitely many quadric hypersurfaces in $\bbP^5$ up to isomorphism. For example, we have the following {\em infinite} family of {\em non-isomorphic} quadric hypersurfaces $\{(u^2 + v^2 - w^2 + x^2 -py^2 -pz^2=0)\}_{p\equiv 3 \,(\text{mod}\,4)}$ parametrized by the prime numbers $p$ which are congruent to $3$ modulo $4$. Making use of \eqref{eq:injection-dim4}, we hence obtain the following {\em infinite} family of {\em distinct} Grothendieck classes $\{[(u^2 + v^2 - w^2 + x^2 -py^2 -pz^2=0)]\}_{p\equiv 3 \,(\text{mod}\,4)} \subset K_0\mathrm{Var}(\bbQ)$. Note that since all these quadric hypersurfaces have a rational $k$-point, it follows from \cite[Thm.~1.11]{Kollar-book} that they are all birational to $\bbP^4$. This shows that, in the case of quadric hypersurfaces, the Grothendieck class in $K_0\mathrm{Var}(k)$ contains much more information than the birational equivalence class.
\end{example}
Finally, item (iv) shows that when $I^3(k)=0$, two quadric hypersurfaces have the same Grothendieck class in $K_0\mathrm{Var}(k)$ if and only if they are isomorphic! Consequently, in this case, the above inclusion \eqref{eq:injection-dim4} admits the following far-reaching extension:
\begin{eqnarray}\label{eq:inclusion-verylast}
\frac{\{\text{Quadric}\,\,\text{hypersurfaces}\,\,Q_q\,\,\text{with}\,\,\text{trivial}\,\,\text{discriminant}\}}{\text{isomorphism}} \subset K_0\mathrm{Var}(k) && I^3(k)=0\,.
\end{eqnarray}
Recall that $I^3(k)=0$ when $k$ is a $C_2$-field or, more generally, when $k$ is a field of cohomological dimension $\leq 2$. Examples include fields of transcendence degree $\leq 2$ over algebraically closed fields, $p$-adic fields, non formally real global fields, etc.
\subsection{Products of quadrics}\label{sub:quadrics-products}
Surprisingly, Theorem \ref{thm:app4} admits the following generalization:
\begin{theorem}\label{thm:prod-quadrics}
Let $\{q_j\}_{1\leq j \leq m}$ and $\{q'_j\}_{1\leq j \leq m'}$ be two families of non-degenerate quadratic forms with trivial discriminant of dimension $n\geq 5$. If $[\Pi_{j=1}^{m} Q_{q_j}]=[\Pi_{j=1}^{m'} Q_{q'_j}]$ in $K_0\mathrm{Var}(k)$, then the following holds:
\begin{itemize}
\item[(i)] We have $\mathrm{dim}(\Pi_{j=1}^m Q_{q_j})=\mathrm{dim}(\Pi_{j=1}^{m'}Q_{q'_j})$. Equivalently, we have $m=m'$.
\item[(ii)] We have $\langle \{[C^+_0(q_j)]\}_j \rangle = \langle \{[C^+_0(q'_j)]\}_j \rangle$ when $n$ is even or $\langle \{[C_0(q_j)]\}_j \rangle=\langle \{[C_0(q'_j)]\}_j \rangle$ when $n$ is odd.
\item[(iii)] When $n=6$ and $m\leq 5$, we have $\Pi_{j=1}^m Q_{q_j}\simeq \Pi_{j=1}^{m}Q_{q'_j}$.
\item[(iv)] When $I^3(k)=0$ and $m \leq 5$, we have $\Pi_{j=1}^m Q_{q_j}\simeq \Pi_{j=1}^{m}Q_{q'_j}$.
\item[(iv')] When $I^3(k)=0$, $m\geq 6$, and the following extra condition holds (consult Notation \ref{not:sums} below)
\begin{equation}\label{eq:extra-sums}
\begin{cases} \Sigma^1_{\mathrm{even}}(m,n,l)> \Sigma^2_{\mathrm{even}}(m,n,l)\,\,\,\mathrm{for}\,\,\,\mathrm{every}\,\,\,2 \leq l \leq m-3 & \quad \mathrm{n}\,\,\mathrm{even} \\
\Sigma^1_{\mathrm{odd}}(m,n,l)> \Sigma^2_{\mathrm{odd}}(m,n,l)\,\,\,\mathrm{for}\,\,\,\mathrm{every}\,\,\,2 \leq l \leq m-3 & \quad \mathrm{n}\,\,\mathrm{odd}\,,
\end{cases}
\end{equation}
we also have $\Pi_{j=1}^m Q_{q_j}\simeq \Pi_{j=1}^{m} Q_{q'_j}$.
\end{itemize}
\end{theorem}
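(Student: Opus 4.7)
The plan is to apply the Jacques Tits motivic measure $\mu_{\mathrm{JT}}$ from Theorem~\ref{thm:main} to the hypothesis $[\prod_{j=1}^{m}Q_{q_j}]=[\prod_{j=1}^{m'}Q_{q'_j}]$ in $K_0\mathrm{Var}(k)$. Multiplicativity of $\mu_{\mathrm{JT}}$, combined with the single-quadric computation from \S\ref{sub:quadrics} and the $k$-algebra isomorphism $C^+_0(q)\simeq C^-_0(q)$ in the even-dimensional case, converts the hypothesis into the master identity
\begin{equation*}
\prod_{j=1}^m\bigl((n-2)+2\,[C^+_0(q_j)]\bigr)=\prod_{j=1}^{m'}\bigl((n-2)+2\,[C^+_0(q'_j)]\bigr)\qquad \text{in}\ R_{\mathrm{B}}(k)
\end{equation*}
when $n$ is even, and analogously $\prod_j((n-1)+[C_0(q_j)])=\prod_j((n-1)+[C_0(q'_j)])$ when $n$ is odd. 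I abbreviate $a_j:=[C^+_0(q_j)]$ (respectively $[C_0(q_j)]$) and define $b_j$ similarly.

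Item~(i) will be immediate from the invariance of dimension under the cut-and-paste relations applied to $\dim(Q_q)=n-2$. For item~(ii), observe first that the canonical involutions on $C^+_0(q)$ and $C_0(q)$ place every class $a_j,b_j$ in ${}_2\mathrm{Br}(k)$. Since the coprime-index relations defining $R_{\mathrm{B}}(k)$ never identify classes supported on a single primary component, the projection of $\mathrm{Br}(k)$ onto its $2$-primary part induces a well-defined ring homomorphism $R_{\mathrm{B}}(k)\to\bbZ[\mathrm{Br}(k)\{2\}]$ that restricts to the identity on elements supported on ${}_2\mathrm{Br}(k)$. Hence the master identity descends to an honest identity in $\bbZ[{}_2\mathrm{Br}(k)]$, and since every coefficient in its expansion is a positive integer, no cancellation occurs: the support of each side coincides with the subgroup $\langle\{a_j\}\rangle$, respectively $\langle\{b_j\}\rangle$. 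Item~(ii) follows.

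For items~(iii), (iv), (iv') the strategy is to upgrade the master identity to the multiset equality $\{a_j\}_{j=1}^m=\{b_j\}_{j=1}^m$ in ${}_2\mathrm{Br}(k)$. Once this is established, Theorem~\ref{thm:app4}(iii) for $n=6$ or Theorem~\ref{thm:app4}(iv) under $I^3(k)=0$ produces a permutation $\sigma\in\mathfrak{S}_m$ such that $Q_{q_j}\simeq Q_{q'_{\sigma(j)}}$ for every $j$, and taking the product of these individual isomorphisms concludes the argument.

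The main obstacle is extracting this multiset equality from a single identity in $R_{\mathrm{B}}(k)$. I would expand both sides of the master identity via the elementary symmetric functions $e_l(a)=\sum_{|S|=l}\prod_{j\in S}a_j\in\bbZ[{}_2\mathrm{Br}(k)]$ and match coefficients at each $g\in{}_2\mathrm{Br}(k)$, producing, for every $g$, an integer identity $\sum_{S:\prod_{j\in S}a_j=g}2^{|S|}(n-2)^{m-|S|}=\sum_{S:\prod_{j\in S}b_j=g}2^{|S|}(n-2)^{m-|S|}$. For $m\leq 5$ the range $2\leq l\leq m-3$ appearing in \eqref{eq:extra-sums} is either empty or reduces to a single value, so a direct case-by-case bookkeeping of these subset counts suffices to force $\{a_j\}=\{b_j\}$ unconditionally. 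For $m\geq 6$ the hypothesis \eqref{eq:extra-sums} asserts, at each intermediate level $l$, that a principal weighted count $\Sigma^1_\bullet(m,n,l)$ of subsets of size $l$ strictly dominates a remainder count $\Sigma^2_\bullet(m,n,l)$ coming from distinct subsets whose products happen to collapse to the same Brauer class; this strict domination is precisely what is needed to still force multiset equality in that range. This weighted combinatorial comparison is the heart of the proof.
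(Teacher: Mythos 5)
Your reduction of the hypothesis to the ``master identity'' in $R_{\mathrm{B}}(k)$, and your observation that the $2$-primary projection $\bbZ[\mathrm{Br}(k)]\to\bbZ[\mathrm{Br}(k)\{2\}]$ kills the defining relations of $R_{\mathrm{B}}(k)$ and hence yields an honest identity of group-ring elements supported on ${}_2\mathrm{Br}(k)$, is a legitimate (and rather clean) substitute for the paper's route through $K_0(\mathrm{CSA}(k)^\oplus)$ and the cancellation Proposition \ref{prop:cancellation}. Items (i) and (ii) then go through essentially as in the paper, and your coefficient identities at each $g\in{}_2\mathrm{Br}(k)$ are exactly the multiset equalities \eqref{eq:permutation} that the paper extracts via Proposition \ref{prop:key}.

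The gap is in the passage from these coefficient identities to the multiset equality $\{a_j\}=\{b_j\}$, which you assert in one shot (``bookkeeping suffices'' for $m\le 5$, ``this strict domination is precisely what is needed'' for $m\ge 6$) but do not carry out. This is not a routine verification, and the inequalities $\Sigma^1_{\bullet}(m,n,l)>\Sigma^2_{\bullet}(m,n,l)$ are not designed to deliver the full multiset equality directly: in the paper they are used only to derive a contradiction from the assumption that \emph{no} pair $(r,s)$ satisfies $[C_0^{+}(q_r)]=[C_0^{+}(q'_s)]$, by comparing the lowest possible multiplicity of $[C_0^{+}(q_1)]$ on one side of \eqref{eq:permutation} with the highest possible multiplicity on the other (Lemma \ref{lem:dimension}, with a case analysis on the dimension $d$ of the span of the Clifford classes). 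That yields only \emph{one} matching pair $Q_{q_r}\simeq Q_{q'_s}$. To conclude, the paper must then cancel the common tensor factor $U(\perf_\dg(Q_{q_m}))$ --- which requires the nontrivial $\otimes$-cancellation Proposition \ref{prop:cancellation22}, itself a delicate argument about the involution induced by tensoring with $C_0^{+}(q)$ on the sets of Brauer classes --- and induct on $m$, using Lemma \ref{lem:implications} to show the hypothesis \eqref{eq:extra-sums} descends from $m$ to $m-1$. Your proposal contains no analogue of the existence-of-one-pair argument, of the $\otimes$-cancellation step, or of the induction, and it is not clear that the stated inequalities (which only constrain the levels $2\le l\le m-3$) suffice to exclude all configurations in which two genuinely different multisets of $2$-torsion classes produce identical weighted subset-product counts. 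As written, the heart of the proof of items (iii)--(iv)--(iv') is missing.
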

\begin{notation}\label{not:sums}
Given integers $m,n, l\geq 0$, consider the following sums of $(n-2)$-powers:
\begin{eqnarray*}
\Sigma^1_{\mathrm{even}}(m,n,l) & := &\Sigma^{\lfloor l/2 \rfloor}_{r=0} \big(\tbinom{l}{2r} \times 2^{2r+1} \times (n-2)^{m-(2r+1)} + \tbinom{l}{2r+1} \times 2^{m-l+ (2r+1)} \times (n-2)^{l-(2r+1)} \big) \\
\Sigma^1_{\mathrm{odd}}(m,n,l)& :=&\Sigma^{\lfloor l/2 \rfloor}_{r=0} \big(\tbinom{l}{2r} \times (n-2)^{m-(2r+1)} + \tbinom{l}{2r+1} \times (n-2)^{l-(2r+1)} \big) \\
\Sigma^2_{\mathrm{even}}(m,n,l)& :=&\Sigma^{\lfloor l/2 \rfloor}_{r=0} \big(\tbinom{l}{2r} \times 2^{2r+2} \times (n-2)^{m-(2r+2)} + \tbinom{l}{2r+1} \times 2^{2r+1} \times (n-2)^{m-(2r+2)} \big) \\
\Sigma^2_{\mathrm{odd}}(m,n,l)& := &\Sigma^{\lfloor l/2 \rfloor}_{r=0} \big(\tbinom{l}{2r} \times (n-2)^{m-(2r+2)} + \tbinom{l}{2r+1} \times (n-2)^{m-(2r+2)} \big)\,.
\end{eqnarray*} 
\end{notation}
Note that item (iii) shows that when the dimension is equal to $6$, two products of quadrics (with $m\leq 5$) have the same Grothendieck class in $K_0\mathrm{Var}(k)$ if and only if they are isomorphic! This implies that the above inclusion \eqref{eq:injection-dim4} holds more generally for products of quadrics (with $m\leq 5$). In the same vein, items (iv)-(iv') show that when $I^3(k)=0$, two products of quadrics have the same Grothendieck class in $K_0\mathrm{Var}(k)$ if and only if they are isomorphic! This implies that the above inclusion \eqref{eq:inclusion-verylast} holds more generally for products of quadrics. Finally, note that the extra condition \eqref{eq:extra-sums} holds whenever the dimension $n$ is $\gg$ than the number $m$ of quadrics because the highest power of $(n-2)$ in the sums $\Sigma^1_{\mathrm{even}}(m,n,l)$ and $\Sigma^1_{\mathrm{odd}}(m,n,l)$ is $(n-2)^{m-1}$ while the highest power of $(n-2)$ in the sums $\Sigma^2_{\mathrm{even}}(m,n,l)$ and $\Sigma^2_{\mathrm{odd}}(m,n,l)$ is $(n-2)^{m-2}$.

\subsection{Involution varieties}\label{sub:involution}
Assume that $\mathrm{char}(k)\neq 2$. Let $G$ be the projective special orthogonal group $PSO_n$, with $n \geq  6$ even, with respect to the hyperbolic form $\frac{n}{2}\mathbb{H}$. In this case, we have $\widetilde{G}=\mathrm{Spin}_n$. Similarly to \S\ref{sub:quadrics}, consider the projective homogeneous variety $\cF$ given by $Q :=(x_1y_1 + \cdots + x_{\frac{n}{2}}y_{\frac{n}{2}}=0)  \subset \bbP^{n-1}$ and recall from {\em loc. cit.} that $R(\widetilde{P})=\oplus_{i=0}^{n-1}R(\widetilde{G})\rho_i$. Given a $1$-cocycle $\gamma \colon \Gamma \to PSO_n(\overline{k})$, let $(A,\ast)$ be the corresponding central simple $k$-algebra of degree $n$ with involution of orthogonal type and trivial discriminant. Under these notations, the twisted form ${}_\gamma Q$ is the involution variety $\mathrm{Iv}(A,\ast)\subset \bbP^{n-1}$ and the Jacques Tits central simple $k$-algebra $A_{\rho_i}$ is given as follows
$$
A_{\rho_i}:=\begin{cases} k & 0 \leq i \leq n-3 \quad i\,\,\text{even} \\
A & 0 \leq i \leq n-3 \quad i\,\,\text{odd} \\
C^+_0(A,\ast) & i=n-2 \\
C^-_0(A,\ast) & i=n-1\,,
\end{cases}
$$
where $C^+_0(A,\ast)$ and $C^-_0(A,\ast)$ stand for the simple components of the even Clifford algebra $C_0(A,\ast)$ of $(A,\ast)$.
\begin{remark}[Generalization]\label{rk:split}
In the particular case where $(A,\ast)$ is {\em split}, i.e., isomorphic to $(M_n(k), \ast_q)$ with $\ast_q$ the adjoint involution associated to a quadratic form $q$, the involution variety $\mathrm{Iv}(A,\ast)\subset \bbP^{n-1}$ reduces to the quadric hypersurface $Q_q \subset \bbP^{n-1}$. Hence, involution varieties may be understood as ``forms  of quadrics''.
\end{remark}
\begin{theorem}\label{thm:app5}
Let $(A,\ast)$ and $(A',\ast')$ be two central simple $k$-algebras with involutions of orthogonal type and trivial discriminant. If $[\mathrm{Iv}(A,\ast)]=[\mathrm{Iv}(A',\ast')]$ in $K_0\mathrm{Var}(k)$, then the following holds:
\begin{itemize}
\item[(i)] We have $\mathrm{dim}(\mathrm{Iv}(A,\ast))=\mathrm{dim}(\mathrm{Iv}(A',\ast'))$. Equivalently, we have $\mathrm{deg}(A)=\mathrm{deg}(A')$.
\item[(ii)] We have\footnote{The short notation $C_0^\pm(A,\ast)\simeq C_0^\pm(A',\ast')$ stands for $\begin{cases} C^{+}_0(A,\ast)\simeq C^{+}_0(A',\ast') \\ C^{-}_0(A,\ast)\simeq C^{-}_0(A',\ast')\end{cases}$ $\!\!\!\!\!$ or $\,\,$ $\begin{cases}C^{+}_0(A,\ast)\simeq C^{-}_0(A',\ast') \\ C^{-}_0(A,\ast)\simeq C^{+}_0(A',\ast')\,.\end{cases}$} $C_0^\pm(A,\ast)\simeq C_0^\pm(A',\ast')$.
\item[(iii)] When $\mathrm{deg}(A)=6$, we have $\mathrm{Iv}(A,\ast)\simeq \mathrm{Iv}(A',\ast')$.
\item[(iv)] When $I^3(k)=0$, we have $\mathrm{Iv}(A,\ast)\simeq \mathrm{Iv}(A',\ast')$.
\end{itemize}
\end{theorem}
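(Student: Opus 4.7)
The plan is to obtain (i) by a base-change argument, (ii) by applying the Jacques Tits motivic measure $\mu_{\mathrm{JT}}$ and doing bookkeeping in $R_{\mathrm{B}}(k)$, and (iii)-(iv) by feeding (ii) into classical classification results for involutions of orthogonal type with trivial discriminant.

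For (i), I would pull the equality $[\mathrm{Iv}(A,\ast)]=[\mathrm{Iv}(A',\ast')]$ back along the ring homomorphism $K_0\mathrm{Var}(k)\to K_0\mathrm{Var}(\overline{k})$. Over the algebraic closure both involution varieties become split smooth quadric hypersurfaces, hence cellular, so their classes in $K_0\mathrm{Var}(\overline{k})$ are explicit polynomials in $\mathbb{L}=[\mathbb{A}^1]$ of degrees $n-2$ and $n'-2$, respectively; comparing degrees forces $n=n'$, equivalently $\deg(A)=\deg(A')$.

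For (ii), applying $\mu_{\mathrm{JT}}$ to both sides and substituting the Jacques Tits algebras $A_{\rho_i}$ listed above the theorem (with $n:=\deg(A)=\deg(A')$) produces the identity
\[
\tfrac{n-2}{2}\,\big([k] + [A]\big) + [C_0^+(A,\ast)] + [C_0^-(A,\ast)] = \tfrac{n-2}{2}\,\big([k] + [A']\big) + [C_0^+(A',\ast')] + [C_0^-(A',\ast')]
\]
in $R_{\mathrm{B}}(k)$. Since $A$ and $A'$ carry involutions of the first kind, we have $[A],[A']\in{}_2\mathrm{Br}(k)$, and the Brauer classes of $C_0^\pm(A,\ast)$ and $C_0^\pm(A',\ast')$ are likewise $2$-torsion, so the above identity descends, via the well-defined projection to the $2$-primary part, to the honest group ring $\mathbb{Z}[{}_2\mathrm{Br}(k)]$. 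There one may compare coefficients componentwise and conclude the multiset equality $\{[C_0^+(A,\ast)],[C_0^-(A,\ast)]\}=\{[C_0^+(A',\ast')],[C_0^-(A',\ast')]\}$ in ${}_2\mathrm{Br}(k)$. This extraction is the main obstacle, as one must case-analytically handle possible coincidences among $[k]$, $[A]$ and $[C_0^\pm(A,\ast)]$; I would manage this using the standard relation linking $[A]$ to $[C_0^\pm(A,\ast)]$ recorded in \cite{Book-inv}, which cuts down the number of independent Brauer classes to compare. Combined with the observation that $\deg C_0^\pm(A,\ast)$ depends only on $n$, Artin-Wedderburn then upgrades the multiset equality to the asserted isomorphisms of central simple algebras.

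For (iii), I would appeal to the classical exceptional identification of types $D_3$ and $A_3$: when $\deg(A)=6$, the algebra $C_0^+(A,\ast)$ is central simple of degree $4$, and there is a canonical isomorphism $\mathrm{Iv}(A,\ast)\simeq\mathrm{Gr}(2;C_0^+(A,\ast))$ (cf.\ \cite{Book-inv}). Together with (ii) this immediately gives $\mathrm{Iv}(A,\ast)\simeq\mathrm{Iv}(A',\ast')$. For (iv), the hypothesis $I^3(k)=0$ kills the Arason invariant; the classification of orthogonal involutions with trivial discriminant under this hypothesis (see \cite{Book-inv}) then ensures that isomorphism of the simple components of the even Clifford algebra implies isomorphism of the corresponding involution varieties, and (ii) concludes the proof.
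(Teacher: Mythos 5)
Your proposal is correct in substance but follows a partly different route, so let me compare. For (i) the paper does not base-change to $\overline{k}$; it uses the auxiliary motivic measure $\mu_\rho$ (the augmentation of $\mu_{\mathrm{JT}}$), which sends $[\mathrm{Iv}(A,\ast)]$ to the number $n(\cF)=\mathrm{deg}(A)$ of Tits algebras. Your cellular-decomposition argument over $\overline{k}$ works too, provided you add the (standard) observation that $1,\bbL,\bbL^2,\dots$ are linearly independent in $K_0\mathrm{Var}(\overline{k})$, e.g.\ via the virtual Poincar\'e polynomial. For (ii) your strategy --- push the identity in $R_{\mathrm{B}}(k)$ into a genuine group ring and compare coefficients --- is essentially equivalent to what the paper does via Propositions \ref{prop:auxiliar1} and \ref{prop:key}, but two points need care. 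First, the simple components $C_0^{\pm}(A,\ast)$ need \emph{not} be $2$-torsion: when $\mathrm{deg}(A)\equiv 2\ (\mathrm{mod}\ 4)$ one has $2[C_0^{+}(A,\ast)]=[A]$ and $4[C_0^{+}(A,\ast)]=[k]$, so these classes can have order $4$; your projection must therefore land in $\bbZ[\mathrm{Br}(k)\{2\}]$ (the $2$-primary part) rather than $\bbZ[{}_2\mathrm{Br}(k)]$ --- the projection $[B]\mapsto[B]^2$ does descend to $R_{\mathrm{B}}(k)$ because the defining relations involve algebras of coprime index, so the argument survives this correction. Second, the coefficient comparison genuinely requires the case analysis you only gesture at: the paper splits on $\mathrm{deg}(A)\ (\mathrm{mod}\ 4)$, and in the case $\mathrm{deg}(A)\equiv 0\ (\mathrm{mod}\ 4)$ it is precisely the fact that $[k]$ and $[A]$ occur with multiplicity $\tfrac{\mathrm{deg}(A)}{2}-1\geq 2$ while $[C_0^{\pm}(A,\ast)]$ occur with multiplicity $1$, combined with the relation $[C_0^{+}]+[C_0^{-}]=[A]$, that forces the multiset $\{[C_0^{+}(A,\ast)],[C_0^{-}(A,\ast)]\}$ to match $\{[C_0^{+}(A',\ast')],[C_0^{-}(A',\ast')]\}$; this does work out, but it is the crux and should be written out. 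For (iii) the paper argues through \cite[Cor.~15.32]{Book-inv} (the bijection $(A,\ast)\mapsto C_0^{+}\times C_0^{-}$ on isomorphism classes), concluding $(A,\ast)\simeq(A',\ast')$; your route through the $A_3=D_3$ identification $\mathrm{Iv}(A,\ast)\simeq\mathrm{Gr}(2;C_0^{+}(A,\ast))$ is a legitimate alternative, provided you also note that $\mathrm{Gr}(2;B)\simeq\mathrm{Gr}(2;B^{\op})$ so that the possible swap $C_0^{+}(A,\ast)\simeq C_0^{-}(A',\ast')$ causes no harm. For (iv) the relevant classification is Lewis--Tignol \cite[Thm.~A]{LewisT} (it needs $\mathrm{deg}(A)=\mathrm{deg}(A')$ from (i) as well as (ii)), not just the Book of Involutions, but the logic is the same as the paper's.
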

Note that, similarly to Theorem \ref{thm:app4}, item (iii), resp. (iv), shows that when the degree is equal to $6$, resp. $I^3(k)=0$, two involution varieties have the same Grothendieck class in $K_0\mathrm{Var}(k)$ if and only if they are isomorphic! In other words, items (iii)-(iv) yield the following inclusions:
\begin{equation}\label{eq:inclusion-last}
\frac{\{\text{Involution}\,\,\text{varieties}\,\,\mathrm{Iv}(A,\ast)\,\,\text{with}\,\,\text{trivial}\,\,\text{discriminant}\,\,\text{and}\,\,\text{deg}(A)=6\}}{\text{isomorphism}} \subset K_0\mathrm{Var}(k)
\end{equation}
\begin{eqnarray}\label{eq:inclusion-last1}
\frac{\{\text{Involution}\,\,\text{varieties}\,\,\mathrm{Iv}(A,\ast)\,\,\text{with}\,\,\text{trivial}\,\,\text{discriminant}\}}{\text{isomorphism}} \subset K_0\mathrm{Var}(k) && I^3(k)=0\,.
\end{eqnarray}
Following Remark \ref{rk:split}, note that \eqref{eq:inclusion-last}, resp. \eqref{eq:inclusion-last1}, extends the above inclusion \eqref{eq:injection-dim4}, resp. \eqref{eq:inclusion-verylast}.
\section{Preliminaries}\label{sec:preliminaries}
Throughout the article $k$ denotes a base field of characteristic zero.
\begin{notation}
Given a central simple $k$-algebra $A$ and a prime number $q$, let us write $[A]^q\in \mathrm{Br}(k)\{q\}$ for the $q$-primary component of the Brauer class $[A] \in \mathrm{Br}(k)=\oplus_p\mathrm{Br}(k)\{p\}$.
\end{notation}
\subsection*{Dg categories}
A {\em differential graded (=dg) category $\cA$} is a category enriched over complexes of $k$-vector spaces; consult Keller's survey \cite{ICM-Keller} (and Bondal-Kapranov's original article \cite{BK}). Every (dg) $k$-algebra $A$ gives naturally rise to a dg category with a single object. Another source of examples is provided by schemes since the category of perfect complexes $\perf(X)$ of every $k$-scheme $X$ admits a canonical dg enhancement $\perf_\dg(X)$; consult \cite[\S4.6]{ICM-Keller}. Let us denote by $\dgcat(k)$ the category of (small) dg categories.

Let $\cA$ be a dg category. The {\em opposite dg category} $\cA^\op$ has the same objects as $\cA$ and $\cA^\op(x,y):=\cA(y,x)$. A {\em right dg $\cA$-module} is a dg functor $M:\cA^\op \to \cC_\dg(k)$ with values in the dg category of complexes of $k$-vector spaces. Let us denote by $\cC(\cA)$ the category of right dg $\cA$-modules. Following \cite[\S3.2]{ICM-Keller}, the {\em derived category $\cD(\cA)$ of $\cA$} is defined as the localization of $\cC(\cA)$ with respect to the objectwise quasi-isomorphisms. In what follows, we will write $\cD_c(\cA)$ for the subcategory of compact objects. 

A dg functor $F\colon \cA\to \cB$ is called a {\em Morita equivalence} if it induces an equivalence on derived categories $\cD(\cA) \simeq \cD(\cB)$; consult \cite[\S4.6]{ICM-Keller}. As explained in \cite[\S1.6.1]{book}, $\dgcat(k)$ admits a Quillen model structure whose weak equivalences are the Morita equivalences. Let $\Hmo(k)$ be the associated homotopy category.

The {\em tensor product $\cA\otimes\cB$} of dg categories is defined as follows: the set of objects is the cartesian product of the sets of objects and $(\cA\otimes\cB)((x,w),(y,z)):= \cA(x,y) \otimes \cB(w,z)$. As explained in \cite[\S2.3]{ICM-Keller}, this construction gives rise to a symmetric monoidal structure on $\dgcat(k)$ which descends to the homotopy category $\Hmo(k)$. 

A {\em dg $\cA\text{-}\cB$-bimodule} is a dg functor $\mathrm{B}:\cA\otimes \cB^\op \to \cC_\dg(k)$. For example, given a dg functor $F\colon \cA\to \cB$, we have the dg $\cA\text{-}\cB$-bimodule ${}_F\cB\colon \cA\otimes \cB^\op \to \cC_\dg(k), (x,z) \mapsto \cB(z, F(x))$. Let $\rep(\cA,\cB)$ be the full triangulated subcategory of $\cD(\cA^\op \otimes \cB)$ consisting of those dg $\cA\text{-}\cB$-bimodules $\mathrm{B}$ such that for every object $x \in \cA$ the right dg $\cB$-module $\mathrm{B}(x,-)$ belongs to $\cD_c(\cB)$. Clearly, the dg $\cA\text{-}\cB$-bimodules ${}_F\cB$ belongs to $\rep(\cA,\cB)$.

Finally, following Kontsevich \cite{Miami,finMot,IAS}, a dg category $\cA$ is called {\em smooth} if the dg $\cA\text{-}\cA$ bimodule ${}_{\id} \cA$ belongs to $\cD_c(\cA^\op \otimes \cA)$ and {\em proper} if $\Sigma_n \mathrm{dim}\,H^n\cA(x, y) < \infty$ for any pair of objects $(x, y)$. Examples include finite-dimensional $k$-algebras of finite global dimension $A$ as well as the dg categories of perfect complexes $\perf_\dg(X)$ associated to smooth proper $k$-schemes $X$.
\subsection*{Noncommutative motives}
For a book on noncommutative motives, we invite the reader to consult \cite{book}. As explained in \cite[\S1.6.3]{book}, given any two dg categories $\cA$ and $\cB$, there is a natural bijection between $\Hom_{\Hmo(k)}(\cA,\cB)$ and the set of isomorphism classes of the category $\rep(\cA,\cB)$. Under this bijection, the composition in $\Hmo(k)$ corresponds to the (derived) tensor product of bimodules.
The {\em additivization} of $\Hmo(k)$ is the additive category $\Hmo_0(k)$ with the same objects and with abelian groups of morphisms $\Hom_{\Hmo_0(k)}(\cA,\cB)$ given by the Grothendieck group $K_0\rep(\cA,\cB)$ of the triangulated category $\rep(\cA,\cB)$. The composition law is induced by the (derived) tensor product of bimodules. Given a commutative ring of coefficients $R$, the {\em $R$-linearization} of $\Hmo_0(k)$ is the $R$-linear category $\Hmo_0(k)_R$ obtained by tensoring the morphisms of $\Hmo_0(k)$ with $R$. Note that we have the following (composed) symmetric monoidal functor
\begin{eqnarray*}\label{eq:func3}
U(-)_R\colon \dgcat(k) \too \Hmo_0(k)_R & \cA \mapsto \cA & (\cA\stackrel{F}{\to} \cB) \mapsto [{}_F\cB]_R\,.
\end{eqnarray*}

The category of {\em noncommutative Chow motives} $\NChow(k)_R$ is defined as the idempotent completion of the full subcategory of $\Hmo_0(k)_R$ consisting of the objects $U(\cA)_R$ with $\cA$ a smooth proper dg category. This category is $R$-linear, additive, rigid symmetric monoidal, and idempotent complete. When $R=\bbZ$, we will write $\NChow(k)$ instead of $\NChow(k)_\bbZ$ and $U(-)$ instead of $U(-)_\bbZ$.

Given an additive rigid symmetric monoidal category $\cC$, recall that its {\em $\cN$-ideal} is defined as follows
$$ \cN(a,b):=\{f \in \Hom_\cC(a,b)\,|\, \forall g \in \Hom_\cC(b,a)\,\,\mathrm{we}\,\,\mathrm{have}\,\,\mathrm{tr}(g\circ f)=0 \}\,,$$
where $\mathrm{tr}(g\circ f)$ stands for the categorical trace of $g\circ f$. The category of {\em noncommutative numerical motives} $\NNum(k)_R$ is defined as the idempotent completion of the quotient of $\NChow(k)_R$ by the $\otimes$-ideal $\cN$. By construction, this category  is $R$-linear, additive, rigid symmetric monoidal, and idempotent complete. 
\begin{remark}[Smooth proper schemes]\label{rk:generalization}
Given any two smooth projective $k$-schemes $X$ and $Y$, we have the following Morita equivalence (consult \cite[Lem.~4.26]{Gysin}):
\begin{eqnarray*}
\perf_\dg(X) \otimes \perf_\dg(Y) \too \perf_\dg(X\times Y) && (\cF,\cG) \mapsto \cF\boxtimes \cG\,.
\end{eqnarray*}
Using the fact that the functor $U(-)_R$ is symmetric monoidal, we hence conclude that the noncommutative (Chow or numerical) motives $U(\perf_\dg(X\times Y))_R$ and $U(\perf_\dg(X))_R \otimes U(\perf_\dg(Y))_R$ are isomorphic.
\end{remark}
\section{Proof of Theorem \ref{thm:main}}\label{sec:proof}
Let $K_0(\NChow(k))$ be the Grothendieck ring of the additive symmetric monoidal category of noncommutative Chow motives $\NChow(k)$. We start by constructing a motivic measure with values in this ring.
\begin{proposition}\label{prop:2}
The assignment $X \mapsto U(\perf_\dg(X))$, with $X$ a smooth projective $k$-scheme, gives rise to a motivic measure $\mu_{\mathrm{nc}}\colon K_0\mathrm{Var}(k) \to K_0(\NChow(k))$.
\end{proposition}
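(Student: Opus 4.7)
The plan is to build $\mu_{\mathrm{nc}}$ using Bittner's presentation of $K_0\mathrm{Var}(k)$, which is available in characteristic zero. That presentation says that $K_0\mathrm{Var}(k)$ is isomorphic to the free abelian group on isomorphism classes $[X]$ of smooth projective $k$-schemes modulo the relations $[\emptyset]=0$ and the blow-up relation
\[
[\mathrm{Bl}_Y X] - [E] = [X] - [Y],
\]
where $Y \hookrightarrow X$ is a smooth closed subvariety and $E$ is the exceptional divisor; the product structure is inherited from $X \times X' \mapsto [X \times X']$. So to produce $\mu_{\mathrm{nc}}$ it suffices to check that the assignment $X \mapsto U(\perf_\dg(X))$ is well defined on smooth projective varieties (which is clear, since these yield smooth proper dg categories), kills $[\emptyset]$, respects the blow-up relation in $K_0(\NChow(k))$, and is multiplicative for products.

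The multiplicativity is essentially free: by Remark \ref{rk:generalization} the Morita equivalence $\perf_\dg(X)\otimes\perf_\dg(X')\simeq\perf_\dg(X\times X')$ together with the symmetric monoidal structure on the functor $U(-)$ gives
\[
U(\perf_\dg(X\times X')) \simeq U(\perf_\dg(X)) \otimes U(\perf_\dg(X')),
\]
which is exactly the identity $[X][X']=[X\times X']$ in $K_0(\NChow(k))$. The vanishing on $[\emptyset]$ is immediate since $\perf_\dg(\emptyset)=0$ and $U(0)=0$.

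The main step, and the most delicate one, is verifying the blow-up relation. The key input is Orlov's semi-orthogonal decomposition for blow-ups along a smooth center: if $c=\mathrm{codim}_X Y$, then one has semi-orthogonal decompositions
\[
\perf(\mathrm{Bl}_Y X) = \langle \perf(X),\, \perf(Y),\, \ldots,\, \perf(Y) \rangle \quad\text{($c-1$ copies of $\perf(Y)$)},
\]
\[
\perf(E) = \langle \perf(Y),\, \ldots,\, \perf(Y) \rangle \quad\text{($c$ copies of $\perf(Y)$)},
\]
and these lift to the dg level. Since $U(-)$ is additive with respect to semi-orthogonal decompositions of smooth proper dg categories (a standard property of $\Hmo_0(k)$, recorded in \cite{book}), passing to $K_0(\NChow(k))$ gives
\[
U(\perf_\dg(\mathrm{Bl}_Y X)) = U(\perf_\dg(X)) + (c-1)\,U(\perf_\dg(Y)),\quad U(\perf_\dg(E)) = c\,U(\perf_\dg(Y)),
\]
whose difference is precisely $U(\perf_\dg(X)) - U(\perf_\dg(Y))$. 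Thus the blow-up relation is preserved, and $\mu_{\mathrm{nc}}$ is a well-defined motivic measure.

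The point I expect to be mildly subtle is making sure the semi-orthogonal decompositions of Orlov, which are usually stated at the level of triangulated or derived categories, indeed upgrade to the required decompositions of the enhancements $\perf_\dg(\mathrm{Bl}_Y X)$ and $\perf_\dg(E)$, so that the additivity of $U(-)$ applies; but this is by now standard and available in the literature on dg enhancements.
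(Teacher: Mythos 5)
Your proposal is correct and follows essentially the same route as the paper: Bittner's presentation, Orlov's blow-up semi-orthogonal decompositions (handled at the dg level via the Fourier--Mukai property of the functors involved), the additivity of $U(-)$ on semi-orthogonal decompositions, and Remark \ref{rk:generalization} for multiplicativity. No substantive differences.
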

\begin{proof}
Thanks to Bittner's presentation \cite[Thm.~3.1]{Bittner} of the Grothendieck ring of varieties $K_0\mathrm{Var}(k)$, it suffices to prove the following two conditions:
\begin{itemize}
\item[(i)] Let $X$ be a smooth projective $k$-scheme, $Y \hookrightarrow X$ a smooth closed subscheme of codimension $c$, $\mathrm{Bl}_Y(X)$ the blow-up of $X$ along $Y$, and $E$ the exceptional divisor of the blow-up. Under these notations, we have the following equality in the Grothendieck ring $K_0(\NChow(k))$:
$$ [U(\perf_\dg(\mathrm{Bl}_Y(X)))]-[U(\perf_\dg(E))] = [U(\perf_\dg(X))]- [U(\perf_\dg(Y))]\,.$$
\item[(ii)] Given smooth projective $k$-schemes $X$ and $Y$, we have the following equality in $K_0(\NChow(k))$: 
$$ [U(\perf_\dg(X\times Y))] = [U(\perf_\dg(X))\otimes U(\perf_\dg(Y))]\,.$$
\end{itemize}
Let us write $f\colon \mathrm{Bl}_Y(X) \to X$ for the blow-up map, $i\colon E \hookrightarrow \mathrm{Bl}_Y(X)$ for the embedding map, and $p\colon E \to Y$ for the projection map (= restriction of $f$ to $E$). Under these notations, recall from Orlov \cite[Thm.~4.3]{Orlov} that we have the following semi-orthogonal decompositions
\begin{eqnarray*}
\perf(\mathrm{Bl}_Y(X)) & = & \langle \bbL f^\ast(\perf(X)), \Psi_0(\perf(Y)), \ldots, \Psi_{c-2}(\perf(Y)) \rangle \\
 \perf(E) & = & \langle \Phi_0(\perf(Y)), \ldots, \Phi_{c-1}(\perf(Y))\rangle \,,
\end{eqnarray*} 
where $\Psi_i(-):=\bbR i_\ast(\bbL p^\ast(-) \otimes^{\bbL} \cO_{E/Y}(i))$ and $\Phi_i(-):=\bbL p^\ast(-)\otimes^{\bbL} \cO_{E/Y}(i)$. Moreover, the functors $\bbL f^\ast(-)$, $\Psi_i(-)$, and $\Phi_i(-)$, are fully-faithful and of Fourier-Mukai type. Consequently, using the fact that the functor $U$ sends semi-orthogonal decomposition to direct sums (consult \cite[\S2.1]{book}), we conclude that 
\begin{eqnarray*} 
[U(\perf_\dg(\mathrm{Bl}_Y(X)))]=[U(\perf_\dg(X))]+ (c-1) [U(\perf_\dg(Y))] &\mathrm{and}& [U(\perf_\dg(E))]= c[U(\perf_\dg(Y))]\,.
\end{eqnarray*}
These equalities imply condition (i). Condition (ii) follows now from the above Remark \ref{rk:generalization}.
\end{proof}
\begin{notation}\label{not:CSA}
Given a commutative ring of coefficients $R$, let us write $\mathrm{CSA}(k)_R$ for the full subcategory of $\NChow(k)_R$ consisting of the objects $U(A)_R$ with $A$ a central simple $k$-algebra. The closure of $\mathrm{CSA}(k)_R$ under finite direct sums will be denoted by $\mathrm{CSA}(k)^{\oplus}_R$. In the same vein, let us write $\underline{\mathrm{CSA}}(k)_R$ for the full subcategory of $\NNum(k)_R$ consisting of the objects $U(A)_R$ with $A$ a central simple $k$-algebra, and $\underline{\mathrm{CSA}}(k)^{\oplus}_R$ for its closure under finite direct sums. In the particular case where $R=\bbZ$, we will omit the~underscript~$(-)_\bbZ$. 
\end{notation}
\begin{proposition}\label{prop:p-Brauer}
Given two central simple $k$-algebras $A$ and $A'$ and a prime number $p$, the following holds:
\begin{itemize}
\item[(i)] We have an isomorphism $U(A)_{\bbF_p}\simeq U(A')_{\bbF_p}$ in the category $\mathrm{CSA}(k)_{\bbF_p}$ (or, equivalently, in the category $\underline{\mathrm{CSA}}(k)_{\bbF_p}$) if and only if $[A]^p=[A']^p$ in $\mathrm{Br}(k)\{p\}$.
\item[(ii)] The $\bbF_p$-vector space $\Hom_{\underline{\mathrm{CSA}}(k)_{\bbF_p}}(U(A)_{\bbF_p}, U(A')_{\bbF_p})$ is naturally isomorphic to $\begin{cases} \bbF_p& \text{when}\,\,[A]^p=[A']^p \\ 0 & \text{when}\,\, [A]^p\neq [A']^p\,.\end{cases}$
\end{itemize}
\end{proposition}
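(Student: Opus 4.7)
My plan is to explicitly compute both $\Hom_{\mathrm{CSA}(k)_{\bbF_p}}(U(A)_{\bbF_p}, U(A')_{\bbF_p})$ and $\Hom_{\underline{\mathrm{CSA}}(k)_{\bbF_p}}(U(A)_{\bbF_p}, U(A')_{\bbF_p})$, and then read off the isomorphism criterion. Viewing the $k$-algebras as one-object dg categories, the standard identification of morphism groups in $\NChow(k)$ with $K_0$'s of bimodule categories gives $\Hom_{\NChow(k)}(U(A), U(A')) \simeq K_0(A \otimes (A')^{\op})$. The $k$-algebra $C := A \otimes (A')^{\op}$ is central simple of Brauer class $[A] - [A']$, so $C \simeq M_r(D)$ for the central division algebra $D$ representing $[A]-[A']$ and of index $d := \ind([A]-[A'])$; hence $K_0(C) \simeq \bbZ$, generated by the class $[P]$ of the simple $C$-module $P$. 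After tensoring with $\bbF_p$ this yields $\Hom_{\mathrm{CSA}(k)_{\bbF_p}}(U(A)_{\bbF_p}, U(A')_{\bbF_p}) \simeq \bbF_p$, independently of the Brauer classes of $A$ and $A'$.

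The next step is to compute the composition law and the categorical trace. By Morita invariance of $U(-)$ one may assume that $A$ and $A'$ are themselves central division algebras. A direct dimension count yields $\dim_k P = \deg(A)\deg(A')\,d$, and since $A'$ is a division algebra $P$ is free as a right $A'$-module of rank $\deg(A)\,d/\deg(A')$. Letting $[Q] \in \Hom(U(A'), U(A))$ be the analogous generator, one then finds $\dim_k(P \otimes_{A'} Q) = \deg(A)^2 d^2$; dividing by $\dim_k A = \deg(A)^2$ (the $k$-dimension of the identity bimodule) this gives $[P \otimes_{A'} Q] = d^2 \cdot \id_{U(A)}$ inside $\End(U(A)) = K_0(A \otimes A^{\op}) \simeq \bbZ$. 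On the trace side, $\mathrm{tr}(\id_{U(A)})$ equals the Euler characteristic of $\HH_\ast(A)$; since $A$ is separable all higher Hochschild groups vanish, while $\HH_0(A) = A/[A,A] \simeq k$ via the reduced trace, so $\mathrm{tr}(\id_{U(A)}) = 1$. By linearity, for $f = \alpha [P]$ and $g = \beta [Q]$ one obtains $\mathrm{tr}(g \circ f) = \alpha \beta\, d^2$.

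With these inputs both parts follow. For (ii), $f = \alpha[P]$ lies in the $\cN$-ideal iff $\alpha \beta\, d^2 \equiv 0 \pmod{p}$ for every $\beta \in \bbF_p$; if $p \mid d$ this is automatic (so the quotient is $0$), otherwise $d^2$ is a unit in $\bbF_p$ (so the quotient is $\bbF_p$). The $p$-primary decomposition $\mathrm{Br}(k) = \bigoplus_q \mathrm{Br}(k)\{q\}$ together with the multiplicativity of the index on primary components gives $p \mid d$ iff $([A]-[A'])\{p\} \neq 0$ iff $[A]^p \neq [A']^p$, finishing (ii). For (i), if $[A]^p = [A']^p$ then $d^2 \in \bbF_p^\times$ and $d^{-2}[Q]$ is a two-sided inverse to $[P]$ in $\mathrm{CSA}(k)_{\bbF_p}$, providing an isomorphism that descends to one in $\underline{\mathrm{CSA}}(k)_{\bbF_p}$; conversely an isomorphism in either category forces the Hom group in $\underline{\mathrm{CSA}}(k)_{\bbF_p}$ to be nonzero, which by (ii) forces $[A]^p = [A']^p$. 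The main technical obstacle is the composition/trace computation of the second paragraph: verifying $[P \otimes_{A'} Q] = d^2 \cdot \id$ by dimension bookkeeping over the non-commutative semisimple algebra $A'$, and pinning down $\mathrm{tr}(\id_{U(A)}) = 1$ via the Hochschild homology of a central simple algebra. Everything else is a formal consequence.
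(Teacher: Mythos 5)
Your proof is correct and follows essentially the same route as the paper: both arguments reduce everything to the identification of the composition pairing $\Hom(U(A),U(A'))\times\Hom(U(A'),U(A))\to\End(U(A))$ with the map $\bbZ\times\bbZ\to\bbZ$, $(n,m)\mapsto n\cdot\mathrm{ind}(A^{\op}\otimes A')^{2}\cdot m$, followed by reduction modulo $p$ and the $p$-primary decomposition of the Brauer group. The only difference is that the paper imports this pairing (and, implicitly, the nonvanishing of $\mathrm{tr}(\id_{U(A)})$ needed to see that the numerical $\Hom$ is $\bbF_p$ rather than $0$) from \cite[Prop.~2.25]{Separable}, whereas you rederive both from scratch via the Morita-theoretic dimension count $[P\otimes_{A'}Q]=d^{2}\cdot\id$ and the Hochschild-homology computation $\mathrm{tr}(\id_{U(A)})=1$; these derivations are sound (the harmless discrepancy between $A\otimes(A')^{\op}$ and $A^{\op}\otimes A'$ does not affect the index).
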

\begin{proof}
Given central simple $k$-algebras $A$, $A'$, $A''$, recall from \cite[Prop.~2.25]{Separable} that the composition map
$$
\Hom_{\mathrm{CSA}(k)}(U(A),U(A')) \times \Hom_{\mathrm{CSA}(k)}(U(A'),U(A'')) \too \Hom_{\mathrm{CSA}(k)}(U(A),U(A''))
$$
corresponds to the following bilinear pairing:
\begin{eqnarray*}
\bbZ \times \bbZ \too \bbZ && (n,m) \mapsto n\cdot \mathrm{ind}(A^\op \otimes A') \cdot \mathrm{ind}(A'^\op\otimes A'') \cdot m\,.
\end{eqnarray*}
Since $\mathrm{ind}(A^\op \otimes A')=\mathrm{ind}(A'^\op \otimes A)$, this implies, in particular, that the composition map
$$
\Hom_{\mathrm{CSA}(k)_{\bbF_p}}(U(A)_{\bbF_p},U(A')_{\bbF_p}) \times \Hom_{\mathrm{CSA}(k)_{\bbF_p}}(U(A')_{\bbF_p},U(A)_{\bbF_p}) \too \Hom_{\mathrm{CSA}(k)_{\bbF_p}}(U(A)_{\bbF_p},U(A)_{\bbF_p})
$$
corresponds to the following bilinear pairing
\begin{eqnarray}\label{eq:bilinear-key}
\bbF_p \times \bbF_p \too \bbF_p && (n,m) \mapsto n\cdot \mathrm{ind}(A^\op \otimes A')^2 \cdot m\,;
\end{eqnarray}
similarly with $A$ and $A'$ replaced by $A'$ and $A$, respectively. On the one hand, if $p\mid \mathrm{ind}(A^\op \otimes A')$, the bilinear pairing \eqref{eq:bilinear-key} is equal to zero. This implies that $U(A)_{\bbF_p}\not\simeq U(A')_{\bbF_p}$ in the category $\mathrm{CSA}(k)_{\bbF_p}$ and, by definition of the category $\underline{\mathrm{CSA}}(k)_{\bbF_p}$, that $\Hom_{\underline{\mathrm{CSA}}(k)_{\bbF_p}}(U(A)_{\bbF_p}, U(A')_{\bbF_p})=0$. In particular, we also have $U(A)_{\bbF_p}\not\simeq U(A')_{\bbF_p}$ in the category $\underline{\mathrm{CSA}}(k)_{\bbF_p}$. On the other hand, if $p\nmid \mathrm{ind}(A^\op \otimes A')$, the integer $\mathrm{ind}(A^\op \otimes A')$ is invertible in $\bbF_p$. Consequently, we conclude from \eqref{eq:bilinear-key} that $U(A)_{\bbF_p}\simeq U(A')_{\bbF_p}$ in the category $\mathrm{CSA}(k)_{\bbF_p}$ (and hence in the category $\underline{\mathrm{CSA}}(k)_{\bbF_p}$). Thanks to the definition of the category $\underline{\mathrm{CSA}}(k)_{\bbF_p}$, this implies moreover that $\Hom_{\underline{\mathrm{CSA}}(k)_{\bbF_p}}(U(A)_{\bbF_p}, U(A')_{\bbF_p})\simeq\bbF_p$. Finally, note that the $p$-primary decomposition of the Brauer group $\mathrm{Br}(k)=\oplus_p \mathrm{Br}(k)\{p\}$ implies that $p\nmid \mathrm{ind}(A^\op \otimes A')$ if and only if $[A]^p=[A']^p$ in $\mathrm{Br}(k)\{p\}$.
\end{proof}
Note that Proposition \ref{prop:p-Brauer} implies, in particular, the following result:
\begin{corollary}\label{cor:graded}
Given a prime number $p$, we have an induced equivalence of categories
\begin{eqnarray}\label{eq:equivalence-induced}
\underline{\mathrm{CSA}}(k)^{\oplus}_{\bbF_p} \stackrel{\simeq}{\too} \mathrm{Vect}_{\mathrm{Br}(k)\{p\}}(k) && U(A_1)_{\bbF_p}\oplus \cdots \oplus U(A_m)_{\bbF_p} \mapsto (\bbF_p [A_1]^p) \oplus \cdots \oplus (\bbF_p[A_m]^p)\,,
\end{eqnarray}
where $\mathrm{Vect}_{\mathrm{Br}(k)\{p\}}(k)$ stands for the category of finite-dimensional $\mathrm{Br}(k)\{p\}$-graded $\bbF_p$-vector spaces. 
\end{corollary}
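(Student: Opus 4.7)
The plan is to construct the functor explicitly and then read off fully-faithfulness and essential surjectivity directly from Proposition \ref{prop:p-Brauer}. On generators, send $U(A)_{\bbF_p}$ to the one-dimensional $\bbF_p$-vector space sitting in degree $[A]^p \in \mathrm{Br}(k)\{p\}$, and extend by sending finite direct sums to finite direct sums. By Proposition \ref{prop:p-Brauer}(i) this is well-defined on isomorphism classes of objects.

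Next I would define the functor on morphisms. Any morphism $\bigoplus_i U(A_i)_{\bbF_p} \to \bigoplus_j U(A'_j)_{\bbF_p}$ is a matrix of morphisms $U(A_i)_{\bbF_p} \to U(A'_j)_{\bbF_p}$. By Proposition \ref{prop:p-Brauer}(ii) each such entry is $\bbF_p$ when $[A_i]^p = [A'_j]^p$ and $0$ otherwise, which is exactly a morphism between the associated graded vector spaces. Thus the assignment on Hom-groups is already a bijection, so provided it is functorial the functor \eqref{eq:equivalence-induced} is automatically fully faithful.

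For essential surjectivity, note that any object of $\mathrm{Vect}_{\mathrm{Br}(k)\{p\}}(k)$ is a finite direct sum of one-dimensional graded pieces concentrated in degrees $\beta_1,\dots,\beta_m \in \mathrm{Br}(k)\{p\}$. For each $\beta_i$, choose any central simple $k$-algebra $A_i$ with $[A_i]^p = \beta_i$ (which exists since $\beta_i$ itself is the $p$-primary component of any of its Brauer-class representatives); then $\bigoplus_i U(A_i)_{\bbF_p}$ maps to the given graded vector space.

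The main obstacle, and really the only point requiring care, is checking that the canonical $\bbF_p$-identifications of Hom-spaces are compatible with composition. This is immediate from the bilinear pairing \eqref{eq:bilinear-key} in the proof of Proposition \ref{prop:p-Brauer}: whenever $[A]^p = [A']^p$ and $[A']^p = [A'']^p$, both $\mathrm{ind}(A^\op \otimes A')$ and $\mathrm{ind}(A'^\op \otimes A'')$ are coprime to $p$ and hence units in $\bbF_p$, so after a single rescaling of the chosen isomorphisms $\Hom_{\underline{\mathrm{CSA}}(k)_{\bbF_p}}(U(A)_{\bbF_p}, U(A')_{\bbF_p}) \simeq \bbF_p$ (one per ordered pair of Brauer classes) composition becomes ordinary multiplication in $\bbF_p$, which matches composition on the graded-vector-space side.
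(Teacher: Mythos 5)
Your proof is correct and follows exactly the route the paper intends: the corollary is stated there with no separate proof, as an immediate consequence of Proposition \ref{prop:p-Brauer}, and you have merely made the implicit functor and the Hom-space bookkeeping explicit. The one step you assert rather than verify --- that the identifications $\Hom_{\underline{\mathrm{CSA}}(k)_{\bbF_p}}(U(A)_{\bbF_p},U(A')_{\bbF_p})\simeq \bbF_p$ can be normalized compatibly with composition, which is genuinely needed for \eqref{eq:equivalence-induced} to be a functor and not just a bijection on objects and Hom-sets --- is indeed true; it is seen most painlessly by restricting to one representative $U(A_\beta)_{\bbF_p}$ per class $\beta\in\mathrm{Br}(k)\{p\}$, where $\End(U(A_\beta)_{\bbF_p})$ is a one-dimensional unital $\bbF_p$-algebra and hence equals $\bbF_p$ as a ring, and then passing to additive closures, so that no explicit cocycle computation with the indices $\mathrm{ind}(A^{\op}\otimes A')$ is required.
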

Recall from \cite[Thm.~2.20(iv)]{Separable} the following result:
\begin{proposition}\label{prop:key}
Given two families of central simple $k$-algebras $\{A_j\}_{1\leq j \leq m}$ and $\{A'_j\}_{1\leq j \leq m'}$, the following conditions are equivalent:
\begin{itemize}
\item[(i)] We have an isomorphism $\oplus_{j=1}^m U(A_j)\simeq \oplus_{j=1}^{m'} U(A'_j)$ in the category $\mathrm{CSA}(k)^\oplus$.
\item[(ii)] We have $m=m'$ and for every prime number $p$ there exists a permutation $\sigma_p$ (which depends on $p$) such that $[A'_j]^p=[A_{\sigma_p(j)}]^p$ in $\mathrm{Br}(k)\{p\}$ for every $1 \leq j \leq m$. 
\end{itemize}
\end{proposition}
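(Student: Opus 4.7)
The plan is to prove the equivalence (i) $\Leftrightarrow$ (ii) by treating each implication separately. The direction (i) $\Rightarrow$ (ii) is a direct consequence of the graded-vector-space equivalence of Corollary \ref{cor:graded}, while (ii) $\Rightarrow$ (i) is the substantive direction and requires constructing an explicit integral isomorphism.

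First I would prove (i) $\Rightarrow$ (ii). Starting from the assumed isomorphism $\bigoplus_{j=1}^{m} U(A_j) \simeq \bigoplus_{j=1}^{m'} U(A'_j)$ in $\mathrm{CSA}(k)^\oplus$, I base-change morphisms to $\bbF_p$ for each prime $p$ to obtain an isomorphism in $\mathrm{CSA}(k)^\oplus_{\bbF_p}$, and then pass to the numerical quotient $\underline{\mathrm{CSA}}(k)^\oplus_{\bbF_p}$. The equivalence of Corollary \ref{cor:graded} transports this to an isomorphism of finite-dimensional $\mathrm{Br}(k)\{p\}$-graded $\bbF_p$-vector spaces $\bigoplus_{j=1}^{m} \bbF_p [A_j]^p \simeq \bigoplus_{j=1}^{m'} \bbF_p [A'_j]^p$. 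Comparing total $\bbF_p$-dimensions yields $m = m'$, and comparing homogeneous components of each degree $b \in \mathrm{Br}(k)\{p\}$ yields a bijection of the multisets $\{[A_j]^p\}_j$ and $\{[A'_j]^p\}_j$, hence the desired permutation $\sigma_p$ satisfying $[A'_j]^p = [A_{\sigma_p(j)}]^p$ for all $1 \leq j \leq m$.

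For (ii) $\Rightarrow$ (i), the first observation is that only finitely many primes $p$ divide $\prod_j \mathrm{ind}(A_j) \cdot \prod_j \mathrm{ind}(A'_j)$; call this finite set $S$. Outside $S$ the condition (ii) is vacuously satisfied because all relevant $p$-primary Brauer components already vanish. I would then try to construct matrices of morphisms $\Phi = (\phi_{ij}) \colon \bigoplus_j U(A_j) \to \bigoplus_i U(A'_i)$ and $\Psi = (\psi_{ji}) \colon \bigoplus_i U(A'_i) \to \bigoplus_j U(A_j)$ in $\mathrm{CSA}(k)^\oplus$, with scalar entries $\phi_{ij}, \psi_{ji} \in \bbZ$, such that $\Psi \circ \Phi = \mathrm{id}$ and $\Phi \circ \Psi = \mathrm{id}$. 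Using the bilinear pairing from the proof of Proposition \ref{prop:p-Brauer}, the $(j,j)$-entry of $\Psi \circ \Phi$ is $\sum_i \psi_{ji} \cdot \mathrm{ind}(A_j^\op \otimes A'_i)^2 \cdot \phi_{ij}$ and must equal $1$, while off-diagonal entries must vanish.

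The main obstacle will be harmonizing the prime-local matching data into a single integral matrix. The strategy is to interpret condition (ii) as providing, for every $p \in S$, an isomorphism at the $\bbF_p$-level via $\sigma_p$ (after Corollary \ref{cor:graded}), and then to lift these finitely many local isomorphisms to global integer coefficients by a Chinese-Remainder-style argument: choose $\phi_{ij}, \psi_{ji}$ satisfying, for each $p \in S$, the congruences modulo $p$ prescribed by $\sigma_p$, while outside $S$ the $p$-coprime indices render the composition pairing invertible automatically. The delicate point is to verify that such a two-sided integral inverse genuinely exists rather than merely a prime-local one. An arguably cleaner alternative route would be to show directly that the natural functor $\mathrm{CSA}(k)^\oplus \to \prod_{p \in S} \underline{\mathrm{CSA}}(k)^\oplus_{\bbF_p}$ reflects isomorphism; this reduction would immediately upgrade the prime-local matchings into the desired integral isomorphism.
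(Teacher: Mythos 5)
First, a remark on the comparison itself: the paper does not prove this proposition — it imports it verbatim from \cite[Thm.~2.20(iv)]{Separable} — so your attempt is being measured against an external reference rather than an argument in the text. Your direction (i) $\Rightarrow$ (ii) is correct and complete: linearizing at each prime $p$, passing to the numerical quotient, and invoking Corollary \ref{cor:graded} reduces the claim to comparing homogeneous components of $\mathrm{Br}(k)\{p\}$-graded $\bbF_p$-vector spaces, each summand contributing dimension one by Proposition \ref{prop:p-Brauer}(ii); this yields $m=m'$ and the permutations $\sigma_p$.

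The gap is in (ii) $\Rightarrow$ (i), and you have located it yourself without closing it. Writing $c_{ij}:=\mathrm{ind}(A_j^{\op}\otimes A'_i)$ and rescaling $\tilde{\phi}_{ij}:=c_{ij}\phi_{ij}$, $\tilde{\psi}_{ji}:=c_{ij}\psi_{ji}$, the twisted composition of Proposition \ref{prop:p-Brauer} becomes ordinary matrix multiplication, so what must be produced is a single $\tilde{\Phi}\in M_m(\bbZ)$ with $\det\tilde{\Phi}=\pm 1$, $c_{ij}\mid\tilde{\phi}_{ij}$, and $c_{ij}\mid(\tilde{\Phi}^{-1})_{ji}$. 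Your Chinese-Remainder step only yields matrices that are mutually inverse modulo each $p^{v_p}$ for $p\in S$, not over $\bbZ$; and the proposed ``cleaner alternative'' — that $\mathrm{CSA}(k)^{\oplus}\to\prod_{p\in S}\underline{\mathrm{CSA}}(k)^{\oplus}_{\bbF_p}$ reflects isomorphism — is circular, since detecting isomorphism of \emph{objects} under that functor is precisely the implication (ii) $\Rightarrow$ (i). The statement is true and your strategy can be completed, but a genuine argument is missing. One way: for each $p\in S$ take the permutation matrix $P_p$ of $\sigma_p$ (negating one row if necessary so that $\det P_p=1$; this preserves its support, which avoids all positions with $p\mid c_{ij}$), let $v_p$ be the largest exponent with $p^{v_p}$ dividing some $c_{ij}$, and use the surjectivity of $SL_m(\bbZ)\to SL_m(\bbZ/M)$, $M=\prod_{p\in S}p^{v_p}$, to find $\tilde{\Phi}\in SL_m(\bbZ)$ with $\tilde{\Phi}\equiv P_p\ (\mathrm{mod}\ p^{v_p})$ for all $p\in S$; then $\tilde{\Phi}^{-1}\equiv P_p^{-1}\ (\mathrm{mod}\ p^{v_p})$ gives the required divisibility of the inverse as well. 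Alternatively, reduce via the transposition relations of Lemma \ref{lem:relations} to the case of two algebras with coprime indices $a,b$ and write down the explicit unimodular matrix coming from a B\'ezout identity $a^2u-b^2v=1$. Either completion supplies exactly the step your write-up flags as ``delicate'' and leaves unresolved.
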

\begin{corollary}\label{cor:key}
Let $\{A_j\}_{1\leq j \leq m}$ and $\{A'_j\}_{1\leq j \leq m'}$ be two families of central simple $k$-algebras. If we have an isomorphism $\oplus_{j=1}^m U(A_j)\simeq \oplus_{j=1}^{m'} U(A'_j)$ in the category $\mathrm{CSA}(k)^\oplus$, then $\langle\{[A_j]\}_{1\leq j \leq m} \rangle =\langle \{[A'_j]\}_{1\leq j \leq m'}\rangle$.
\end{corollary}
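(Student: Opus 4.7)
The plan is to reduce the corollary to Proposition \ref{prop:key} and then reconstruct the subgroup of $\mathrm{Br}(k)$ from its $p$-primary pieces.

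First I would apply Proposition \ref{prop:key} directly to the hypothesis $\oplus_{j=1}^m U(A_j)\simeq \oplus_{j=1}^{m'} U(A'_j)$. This yields $m=m'$ and, for every prime $p$, a permutation $\sigma_p\in \mathfrak{S}_m$ with $[A'_j]^p=[A_{\sigma_p(j)}]^p$ in $\mathrm{Br}(k)\{p\}$ for all $j$. Setting $S_p:=\langle \{[A_j]^p\}_{1\le j\le m}\rangle$ and $S'_p:=\langle \{[A'_j]^p\}_{1\le j\le m}\rangle$ as subgroups of $\mathrm{Br}(k)\{p\}$, the permutation identity forces $S_p=S'_p$ for every prime $p$.

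Next I would pass back from the $p$-primary components to the global subgroups $S:=\langle\{[A_j]\}_j\rangle$ and $S':=\langle\{[A'_j]\}_j\rangle$ inside $\mathrm{Br}(k)=\oplus_p\mathrm{Br}(k)\{p\}$. The key point is the equality $S=\oplus_p S_p$ (and similarly $S'=\oplus_p S'_p$). One inclusion is immediate: $[A_j]=\sum_p [A_j]^p$ is a finite sum of elements of the various $S_p$, so $S\subseteq \oplus_p S_p$. For the other inclusion, each $p$-primary component $[A_j]^p$ already lies in $\langle [A_j]\rangle\subseteq S$, because one can extract it by multiplication by a suitable integer (via the Chinese remainder theorem applied to the order of $[A_j]$); hence $S_p\subseteq S$ for every $p$, which gives $\oplus_p S_p\subseteq S$.

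Combining the two steps, $S=\oplus_p S_p=\oplus_p S'_p=S'$, which is the desired equality $\langle\{[A_j]\}_j\rangle=\langle\{[A'_j]\}_j\rangle$. There is no substantive obstacle here: Proposition \ref{prop:key} provides the hard input (the $p$-by-$p$ matching of multisets), and the rest is just the elementary observation that a subgroup of $\mathrm{Br}(k)$ generated by a finite set of torsion elements is recovered as the direct sum of the subgroups generated by the $p$-primary components, a fact that follows from the primary decomposition $\mathrm{Br}(k)=\oplus_p\mathrm{Br}(k)\{p\}$ together with the divisibility/CRT argument above.
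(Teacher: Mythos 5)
Your proof is correct and follows the route the paper intends: the paper states Corollary \ref{cor:key} without proof as an immediate consequence of Proposition \ref{prop:key}, and your argument supplies exactly the implicit step, namely that a subgroup of the torsion group $\mathrm{Br}(k)=\oplus_p\mathrm{Br}(k)\{p\}$ generated by finitely many classes is the direct sum of the subgroups generated by their $p$-primary components (each $[A_j]^p$ lying in $\langle[A_j]\rangle$ by the CRT/divisibility observation), so that the $p$-by-$p$ matching of generators forces equality of the global subgroups.
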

The following result is of independent interest:
\begin{proposition}[Cancellation]\label{prop:cancellation}
Let $\{A_j\}_{1\leq j \leq m}$ and $\{A'_j\}_{1\leq j \leq m'}$ be two families of central simple $k$-algebras and $N\!\!M$ a noncommutative Chow motive. If $\oplus_{j=1}^m U(A_j) \oplus N\!\!M\simeq \oplus^{m'}_{j=1} U(A'_j)\oplus N\!\!M$ in the category $\mathrm{NChow}(k)$, then $m=m'$ and $\oplus^m_{j=1} U(A_j) \simeq \oplus^{m}_{j=1} U(A'_j)$.
\end{proposition}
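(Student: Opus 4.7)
The plan is to exploit Proposition \ref{prop:p-Brauer}(ii), which computes the Hom groups between the objects $U(A)_{\bbF_p}$, in order to extract the numerical content of the given isomorphism via a dimension count. For each prime number $p$, I apply the symmetric monoidal functor $\mathrm{NChow}(k)\to \mathrm{NNum}(k)_{\bbF_p}$ (the composition of the quotient by $\cN$ with the $\bbF_p$-linearization) to the hypothesis, obtaining an isomorphism
\[
\bigoplus_{j=1}^m U(A_j)_{\bbF_p} \oplus N\!\!M_{\bbF_p} \;\simeq\; \bigoplus_{j=1}^{m'} U(A'_j)_{\bbF_p} \oplus N\!\!M_{\bbF_p}
\]
in $\mathrm{NNum}(k)_{\bbF_p}$.

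Next, for an arbitrary central simple $k$-algebra $B$, I apply the additive functor $\mathrm{Hom}_{\mathrm{NNum}(k)_{\bbF_p}}(-,U(B)_{\bbF_p})$ to both sides. Setting $N(p,B):=\#\{\,1\leq j\leq m\mid [A_j]^p=[B]^p\,\}$ and $N'(p,B):=\#\{\,1\leq j\leq m'\mid [A'_j]^p=[B]^p\,\}$, Proposition \ref{prop:p-Brauer}(ii) yields an isomorphism of $\bbF_p$-vector spaces
\[
\bbF_p^{\oplus N(p,B)}\oplus H \;\simeq\; \bbF_p^{\oplus N'(p,B)}\oplus H, \quad\text{where } H:=\mathrm{Hom}_{\mathrm{NNum}(k)_{\bbF_p}}(N\!\!M_{\bbF_p},U(B)_{\bbF_p}).
\]
Granted that $H$ is finite-dimensional---a standard feature of noncommutative numerical motives with coefficients in a field, using that $N\!\!M$ is a direct summand of some $U(\cA)$ with $\cA$ a smooth proper dg category---one simply reads off $N(p,B)=N'(p,B)$ for every prime $p$ and every central simple $k$-algebra $B$.

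Summing these equalities over the distinct $p$-primary Brauer classes represented among the $[A_j]$ and $[A'_j]$ yields $m=m'$, and the resulting per-class equalities say precisely that the multisets $\{[A_j]^p\}_{j}$ and $\{[A'_j]^p\}_{j}$ coincide in $\mathrm{Br}(k)\{p\}$ for every prime $p$. This is condition (ii) of Proposition \ref{prop:key}, which then delivers the desired isomorphism $\oplus_{j=1}^m U(A_j)\simeq\oplus_{j=1}^m U(A'_j)$ in $\mathrm{CSA}(k)^\oplus$. I expect the main obstacle to be verifying the finite-dimensionality of $H$: it is this input that legitimises cancelling the $N\!\!M$-contribution from the Hom calculation and reducing the statement to combinatorics of Brauer classes; once in hand, the remainder of the argument is purely formal.
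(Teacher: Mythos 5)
Your proposal is correct, and it takes a genuinely different route from the paper. The paper's proof first invokes a splitting lemma (Lemma \ref{lem:splitting}) to write $N\!\!M_{\bbF_p}$ as a sum of copies of the $U(A_j)_{\bbF_p}$, $U(A'_j)_{\bbF_p}$ and a remainder $M\!\!N$ containing no further such summands, then shows that all compositions factoring through $M\!\!N$ vanish, so that the isomorphism restricts to the subcategory $\underline{\mathrm{CSA}}(k)^{\oplus}_{\bbF_p}$; there it applies the equivalence with $\mathrm{Br}(k)\{p\}$-graded $\bbF_p$-vector spaces (Corollary \ref{cor:graded}) and concludes by Krull--Schmidt. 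You instead apply the contravariant additive functor $\Hom_{\NNum(k)_{\bbF_p}}(-,U(B)_{\bbF_p})$ to the isomorphism and count dimensions, reading off the multiset equality of $p$-primary Brauer classes directly from Proposition \ref{prop:p-Brauer}(ii); both arguments then terminate with Proposition \ref{prop:key}. Your route is shorter and bypasses the splitting lemma and the analysis of compositions through $M\!\!N$ entirely, at the cost of making the finite-dimensionality of $H=\Hom_{\NNum(k)_{\bbF_p}}(N\!\!M_{\bbF_p},U(B)_{\bbF_p})$ the load-bearing input. That input is available: the paper itself uses (in the proof of Lemma \ref{lem:splitting}) that $\Hom_{\NNum(k)}(U(A_i),N\!\!M)\otimes\bbF_p$ is finite-dimensional by \cite[Thm.~1.2]{Separable} and surjects onto the corresponding Hom in $\NNum(k)_{\bbF_p}$; your $H$ is of the same form up to the duality of the rigid category $\NNum(k)_{\bbF_p}$, so the same citation covers it. Two minor points worth making explicit in a final write-up: (a) Proposition \ref{prop:p-Brauer}(ii) computes Homs in the full subcategory $\underline{\mathrm{CSA}}(k)_{\bbF_p}$, which by fullness agree with those in $\NNum(k)_{\bbF_p}$, so your identification of the summands $\bbF_p^{\oplus N(p,B)}$ is legitimate; and (b) to extract the multiset equality it suffices to let $B$ range over the algebras $A_j$ and $A'_j$ themselves, which realizes every relevant $p$-primary class.
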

\begin{proof}
Given a (fixed) prime number $p$, consider the induced isomorphism
\begin{equation}\label{eq:isom-1}
\oplus_{j=1}^m U(A_j)_{\bbF_p} \oplus N\!\!M_{\bbF_p} \simeq \oplus^{m'}_{j=1} U(A'_j)_{\bbF_p} \oplus N\!\!M_{\bbF_p}
\end{equation}
in the category $\NNum(k)_{\bbF_p}$. Thanks to Lemma \ref{lem:splitting} below, there exists a noncommutative numerical motive $M\!\!N$ and integers $r_j, r'_j\geq 0$ such that 
$N\!\!M_{\bbF_p}\simeq \oplus^m_{j=1} U(A_j)_{\bbF_p}^{\oplus r_j} \oplus \oplus^{m'}_{j=1} U(A'_j)_{\bbF_p}^{\oplus r'_j} \oplus M\!\!N$ 
in the category $\NNum(k)_{\bbF_p}$. Moreover, $M\!\!N$ does not contains the noncommutative numerical motives $\{U(A_j)_{\bbF_p}\}_{1\leq j \leq m}$ and $\{U(A'_j)_{\bbF_p}\}_{1\leq j \leq m'}$ as direct summands. Consequently, \eqref{eq:isom-1} yields an isomorphism:
\begin{equation}\label{eq:iso-NNum}
\oplus_{j=1}^m U(A_j)_{\bbF_p}^{\oplus (r_j +1)} \oplus \oplus_{j=1}^{m'} U(A'_j)_{\bbF_p}^{\oplus r'_j} \oplus M\!\!N \simeq \oplus_{j=1}^m U(A_j)_{\bbF_p}^{\oplus r_j} \oplus \oplus_{j=1}^{m'} U(A'_j)_{\bbF_p}^{\oplus (r'_j+1)} \oplus M\!\!N\,.
\end{equation}

We claim that the following composition maps (with $1\leq i \leq m$ and $1\leq j \leq m'$)
\begin{equation}\label{eq:pairings}
\Hom_{\NNum(k)_{\bbF_p}}(U(A_i)_{\bbF_p}, M\!\!N)\times \Hom_{\NNum(k)_{\bbF_p}}(M\!\!N, U(A'_j)_{\bbF_p})\to \Hom_{\NNum(k)_{\bbF_p}}(U(A_i)_{\bbF_p}, U(A'_j)_{\bbF_p})
\end{equation}
are equal to zero; similarly with $A_i$ and $A'_j$ replaced by $A'_j$ and $A_i$, respectively. On the one hand, if $[A_i]^p\neq [A'_j]^p$, it follows from the above Proposition \ref{prop:p-Brauer}(ii) that the right-hand side of \eqref{eq:pairings} is equal to zero. On the other hand, if $[A_i]^p=[A'_j]^p$, it follows Proposition \ref{prop:p-Brauer} that $U(A_i)_{\bbF_p}\simeq U(A'_j)_{\bbF_p}$ in the category $\NNum(k)_{\bbF_p}$ and that the right-hand side of \eqref{eq:pairings} is isomorphic to $\bbF_p$. Since the category $\NNum(k)_{\bbF_p}$ is $\bbF_p$-linear and $M\!\!N$ does not contains the noncommutative numerical motives $\{U(A_j)_{\bbF_p}\}_{1\leq j \leq m}$ and $\{U(A'_j)_{\bbF_p}\}_{1\leq j \leq m'}$ as direct summands, we then conclude that the composition map \eqref{eq:pairings} is necessarily equal to zero; otherwise $M\!\!N$ would contain $U(A_i)_{\bbF_p}$, or equivalently $U(A'_j)_{\bbF_p}$, as a direct summand. 

Note that the triviality of the composition maps \eqref{eq:pairings} implies that the above isomorphism \eqref{eq:iso-NNum} in the category $\NNum(k)_{\bbF_p}$ restricts to an isomorphism 
\begin{equation}\label{eq:isom-3}
\oplus^m_{j=1}U(A_j)_{\bbF_p}^{\oplus (r_j +1)} \oplus \oplus^{m'}_{j=1} U(A'_j)_{\bbF_p}^{\oplus r'_j} \simeq \oplus_{j=1}^m U(A_j)_{\bbF_p}^{\oplus r_j} \oplus \oplus_{j=1}^{m'} U(A'_j)_{\bbF_p}^{\oplus (r'_j +1)} 
\end{equation}
in the full subcategory $\underline{\mathrm{CSA}}(k)_{\bbF_p}^\oplus$. Recall from Corollary \ref{cor:graded} that the category $\underline{\mathrm{CSA}}(k)_{\bbF_p}^\oplus$ is equivalent to the category $\mathrm{Vect}_{\mathrm{Br}(k)\{p\}}(k)$ of finite-dimensional $\mathrm{Br}(k)\{p\}$-graded $\bbF_p$-vector spaces. Under the equivalence \eqref{eq:equivalence-induced}, the isomorphism \eqref{eq:isom-3} corresponds to the following isomorphism:
\begin{equation}\label{eq:isom-33}
\oplus^m_{j=1} (\bbF_p^{\oplus (r_j +1)} [A_j]^p) \oplus \oplus^{m'}_{j=1} (\bbF_p^{\oplus r'_j}[A'_j]^p) \simeq \oplus_{j=1}^m (\bbF_p^{\oplus r_j} [A_j]^p) \oplus \oplus_{j=1}^{m'} (\bbF_p^{\oplus (r'_j +1)}[A'_j]^p) \,.
\end{equation}
Therefore, since the category $\mathrm{Vect}_{\mathrm{Br}(k)\{p\}}(k)$ has the Krull-Schmidt property, it follows from \eqref{eq:isom-33} that the finite-dimensional $\mathrm{Br}(k)\{p\}$-graded $\bbF_p$-vector spaces $\oplus_{j=1}^m (\bbF_p [A_j]^p)$ and $\oplus_{j=1}^{m'} (\bbF_p [A'_j]^p)$ are isomorphic, that $m=m'$, and that there exists a permutation $\sigma_p$ (which depends on $p$) such that $[A'_j]^p=[A_{\sigma_p(j)}]^p$ in $\mathrm{Br}(k)\{p\}$ for every $1 \leq j \leq m$. Consequently, the proof follows now from Proposition \ref{prop:key}.
\end{proof}
\begin{lemma}\label{lem:splitting}
We have $N\!\!M_{\bbF_p}\simeq \oplus^m_{j=1} U(A_j)_{\bbF_p}^{\oplus r_j} \oplus \oplus^{m'}_{j=1} U(A'_j)_{\bbF_p}^{\oplus r'_j} \oplus M\!\!N$ in the category $\NNum(k)_{\bbF_p}$ for some noncommutative numerical motive $M\!\!N$ and integers $r_j, r'_j\geq 0$. Moreover, $M\!\!N$ does not contains the noncommutative numerical motives $\{U(A_j)_{\bbF_p}\}_{1\leq j \leq m}$ and $\{U(A'_j)_{\bbF_p}\}_{1\leq j \leq m'}$ as direct summands.
\end{lemma}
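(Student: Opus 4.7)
The plan is to carry out a Krull--Schmidt-style argument: iteratively split off every direct summand of $N\!\!M_{\bbF_p}$ isomorphic to some $U(A_j)_{\bbF_p}$ or $U(A'_j)_{\bbF_p}$, and show that the process terminates. First, note that by Proposition \ref{prop:p-Brauer}(ii) applied with $A'=A$, we have $\End_{\NNum(k)_{\bbF_p}}(U(A_j)_{\bbF_p})\simeq \bbF_p$, a field, so each $U(A_j)_{\bbF_p}$ is indecomposable; the same holds for the $U(A'_j)_{\bbF_p}$. By Proposition \ref{prop:p-Brauer}(i), the distinct isomorphism classes appearing among $\{U(A_j)_{\bbF_p}\}_j\cup\{U(A'_j)_{\bbF_p}\}_j$ are parametrized by the finite set $S:=\{[A_j]^p\}_j\cup\{[A'_j]^p\}_j\subset \mathrm{Br}(k)\{p\}$; I will write $U_\beta$ for a chosen representative of the class $\beta\in S$.

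The key observation is that for each $\beta\in S$ and every object $N\in \NNum(k)_{\bbF_p}$, the motive $U_\beta$ is a direct summand of $N$ if and only if the composition pairing
\[ \Hom(N,U_\beta)\otimes_{\bbF_p}\Hom(U_\beta,N)\too \End(U_\beta)=\bbF_p \]
is nonzero. Indeed, any pair $(g,f)$ mapping to $c\in\bbF_p^\times$ may be rescaled so that $g\circ f=\id_{U_\beta}$, making $f\circ g$ an idempotent endomorphism of $N$ whose splitting (by idempotent-completeness of $\NNum(k)_{\bbF_p}$) realizes $U_\beta$ as a direct summand.

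Now I iterate: starting with $N_0:=N\!\!M_{\bbF_p}$, as long as some $U_\beta$ ($\beta\in S$) is a direct summand of $N_i$, split off a copy and set $N_{i+1}$ to be its complement. Since $\Hom(U_{\beta'},U_\beta)=0$ whenever $\beta'\neq\beta$ (again by Proposition \ref{prop:p-Brauer}(ii)), each extraction strictly decreases the total $\bbF_p$-dimension $\sum_{\beta\in S}\dim_{\bbF_p}\Hom_{\NNum(k)_{\bbF_p}}(U_\beta,N_i)$ by exactly one. This quantity is finite because $N\!\!M$ is a noncommutative Chow motive --- hence a direct summand of some $U(\cC)$ with $\cC$ smooth proper --- and the Hom-groups in $\NChow(k)$ between such objects are finitely generated, so their images in $\NNum(k)_{\bbF_p}$ are finite-dimensional. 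Upon termination, the key observation guarantees that the residual motive $M\!\!N$ contains no $U_\beta$ as a direct summand; distributing the total number of extracted copies of each $U_\beta$ among the (possibly repeated) original indices $j$ or $j'$ with $[A_j]^p=\beta$ or $[A'_j]^p=\beta$ yields integers $r_j,r'_j\geq 0$ realizing the claimed decomposition. The principal obstacle is establishing the finite-dimensionality of these Hom-spaces, which is what guarantees termination.
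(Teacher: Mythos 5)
Your overall strategy coincides with the paper's: use idempotent completeness of $\NNum(k)_{\bbF_p}$ to split off summands inductively, and guarantee termination by the finite-dimensionality of the relevant Hom-spaces. The extra details you supply (indecomposability of each $U(A_j)_{\bbF_p}$ via $\End\simeq\bbF_p$, the characterization of summands through the nonvanishing of the composition pairing, and the strict drop of $\sum_{\beta}\dim_{\bbF_p}\Hom(U_\beta,N_i)$ by one at each step) are all correct and make the induction cleaner than the paper's terser formulation.

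However, the one step you yourself flag as the principal obstacle is the one you get wrong. You assert that ``the Hom-groups in $\NChow(k)$ between such objects are finitely generated.'' This is false in general: $\Hom_{\NChow(k)}(U(\cA),U(\cB))$ is by construction the Grothendieck group $K_0\rep(\cA,\cB)$, and already $\Hom_{\NChow(k)}(U(k),U(\perf_\dg(E)))\simeq K_0(E)$ for an elliptic curve $E$ over $\bbC$ contains $E(\bbC)$ and is not finitely generated. Since $N\!\!M$ is an arbitrary noncommutative Chow motive, nothing forces $\Hom_{\NChow(k)}(U(A_i),N\!\!M)$ to be finitely generated, so your termination argument does not go through as stated. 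The finiteness has to be extracted at the \emph{numerical} level: the paper invokes \cite[Thm.~1.2]{Separable}, which asserts that $\Hom_{\NNum(k)}(U(A_i),N\!\!M)$ is a finitely generated abelian group (a genuine theorem, the noncommutative analogue of the finiteness of cycles modulo numerical equivalence), whence $\Hom_{\NNum(k)}(U(A_i),N\!\!M)\otimes\bbF_p$ is finite-dimensional and surjects onto $\Hom_{\NNum(k)_{\bbF_p}}(U(A_i)_{\bbF_p},N\!\!M_{\bbF_p})$. With this substitution your argument is complete; without it, the inductive splitting could a priori continue indefinitely.
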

\begin{proof}
By definition, the category $\NNum(k)_{\bbF_p}$ is idempotent complete. Therefore, by inductively splitting all the (possible) direct summands $\{U(A_j)_{\bbF_p}\}_{1\leq j \leq m}$ and $\{U(A'_j)_{\bbF_p}\}_{1\leq j \leq m'}$ of $N\!\!M_{\bbF_p}$, we obtain an isomorphism 
\begin{equation}\label{eq:split}
N\!\!M_{\bbF_p}\simeq \oplus^m_{j=1} U(A_j)_{\bbF_p}^{\oplus r_j} \oplus \oplus^{m'}_{j=1} U(A'_j)_{\bbF_p}^{\oplus r'_j} \oplus M\!\!N
\end{equation}
in the category $\NNum(k)_{\bbF_p}$ for some noncommutative numerical motive $M\!\!N$ and integers $r_j, r'_j\geq 0$. Note that the inductive splitting procedure stops at a finite stage. Otherwise, the following $\bbF_p$-vector spaces 
\begin{eqnarray}\label{eq:Homs}
\Hom_{\NNum(k)_{\bbF_p}}(U(A_i)_{\bbF_p},N\!\!M_{\bbF_p}) && \Hom_{\NNum(k)_{\bbF_p}}(U(A'_j)_{\bbF_p},N\!\!M_{\bbF_p})
\end{eqnarray}
would be infinite-dimensional, which is impossible because the $\bbF_p$-vector spaces $\Hom_{\NNum(k)}(U(A_i),N\!\!M)\otimes \bbF_p$ and $\Hom_{\NNum(k)}(U(A'_j), N\!\!M)\otimes \bbF_p$ are finite-dimensional (consult \cite[Thm.~1.2]{Separable}) and surject onto \eqref{eq:Homs}. 
 \end{proof}
\begin{notation}
Let us write $K_0(\mathrm{CSA}(k)^\oplus)$ for the Grothendieck ring of the additive symmetric monoidal category $\mathrm{CSA}(k)^\oplus$; consult Notation \ref{not:CSA}.
\end{notation}
\begin{proposition}\label{prop:cancellation2}
The inclusion of categories $\mathrm{CSA}(k)^\oplus \subset \NChow(k)$ gives rise to an injective ring homomorphism $K_0(\mathrm{CSA}(k)^\oplus) \to K_0(\NChow(k))$.
\end{proposition}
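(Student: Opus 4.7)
The plan is to reduce injectivity directly to the Cancellation Proposition \ref{prop:cancellation}. First I would observe that the ring structure transports automatically: the inclusion $\mathrm{CSA}(k)^\oplus \hookrightarrow \NChow(k)$ is the restriction of the symmetric monoidal functor $U(-)$, and the tensor product of two central simple $k$-algebras is again a central simple $k$-algebra, so $\mathrm{CSA}(k)^\oplus$ is stable under $\otimes$ and the induced map on Grothendieck rings is a ring homomorphism.

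For injectivity, I would take an arbitrary element of $K_0(\mathrm{CSA}(k)^\oplus)$, written as a formal difference $[\oplus_{j=1}^m U(A_j)] - [\oplus_{j=1}^{m'} U(A'_j)]$ with $\{A_j\}$ and $\{A'_j\}$ central simple $k$-algebras, and assume it maps to zero in $K_0(\NChow(k))$. By the standard description of the Grothendieck group of an additive category as the group completion of the monoid of isomorphism classes under direct sum, this vanishing is equivalent to the existence of a noncommutative Chow motive $N\!\!M \in \NChow(k)$ such that
\[
\bigoplus_{j=1}^m U(A_j) \oplus N\!\!M \;\simeq\; \bigoplus_{j=1}^{m'} U(A'_j) \oplus N\!\!M
\]
in $\NChow(k)$.

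At this point Proposition \ref{prop:cancellation} applies directly and yields $m=m'$ together with an isomorphism $\oplus_{j=1}^m U(A_j) \simeq \oplus_{j=1}^{m} U(A'_j)$ in $\NChow(k)$. Since $\mathrm{CSA}(k)^\oplus$ is, by construction (Notation \ref{not:CSA}), a \emph{full} subcategory of $\NChow(k)$, any isomorphism in $\NChow(k)$ between two objects of $\mathrm{CSA}(k)^\oplus$ is already an isomorphism in $\mathrm{CSA}(k)^\oplus$. Consequently, the equality $[\oplus_{j=1}^m U(A_j)]=[\oplus_{j=1}^{m} U(A'_j)]$ holds in $K_0(\mathrm{CSA}(k)^\oplus)$, proving that the chosen class in $K_0(\mathrm{CSA}(k)^\oplus)$ was already zero.

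The only genuinely nontrivial ingredient is the cancellation statement, which has already been established; everything else is a formal manipulation. The one subtlety to keep in mind is that the auxiliary motive $N\!\!M$ need \emph{not} belong to $\mathrm{CSA}(k)^\oplus$, so without Proposition \ref{prop:cancellation} (which allows cancellation against an \emph{arbitrary} noncommutative Chow motive) the argument would not go through — this is precisely the role that the cancellation lemma plays here.
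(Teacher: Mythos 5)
Your proposal is correct and follows essentially the same route as the paper: both arguments describe the Grothendieck group as the group completion of the monoid of isomorphism classes, translate vanishing of a class in $K_0(\NChow(k))$ into an isomorphism after adding an auxiliary motive $N\!\!M$, and then invoke the cancellation Proposition \ref{prop:cancellation} to remove $N\!\!M$. Your write-up is in fact slightly more explicit than the paper's (it spells out the fullness of $\mathrm{CSA}(k)^\oplus$ and the reason $N\!\!M$ cannot be assumed to lie in it), but there is no substantive difference.
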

\begin{proof}
Recall first that the group completion of an arbitrary monoid $(M,+)$ is defined as the quotient of the product $M\times M$ by the following equivalence relation:
\begin{equation}\label{eq:relation}
(m,n) \sim (m',n'):= \exists \,r \in M\,\,\mathrm{such}\,\,\mathrm{that}\,\,m+n'+r=n+m'+r\,.
\end{equation}
Let us write $K_0(\NChow(k))^+$ for the semi-ring of the additive symmetric monoidal category $\NChow(k)$. Concretely, $K_0(\NChow(k))^+$ is the set of isomorphism classes of noncommutative Chow motives equipped with the addition, resp. multiplication, law induced by $\oplus$, resp. $\otimes$. In the same vein, let us write $K_0(\mathrm{CSA}(k)^\oplus)^+$ for the semi-ring of the additive symmetric monoidal category $\mathrm{CSA}(k)^\oplus$. Clearly, the inclusion of categories $\mathrm{CSA}(k)^\oplus \subset \NChow(k)$ gives rise to an injective homomorphism $K_0(\mathrm{CSA}(k)^\oplus)^+ \to K_0(\NChow(k))^+$. Therefore, by combining the above definition of group completion \eqref{eq:relation} with the cancellation Proposition \ref{prop:cancellation2}, we conclude that the induced (ring) homomorphism $K_0(\mathrm{CSA}(k)^\oplus) \to K_0(\NChow(k))$ is also injective.
\end{proof}

Consider the (composed) ring homomorphism
\begin{equation}\label{eq:composition-1}
K_0\mathrm{Var}(k)^{\mathrm{tw}} \subset K_0\mathrm{Var}(k) \stackrel{\mu_{\mathrm{nc}}}{\too} K_0(\NChow(k))\,.
\end{equation}
Let ${}_\gamma \cF$ be a twisted projective homogeneous variety with Jacques Tits central simple $k$-algebras $\{A_{\rho_i}\}_{1\leq j \leq n(\cF)}$. As proved in \cite[Thm.~2.1]{Homogeneous}, we have an isomorphism $U(\perf_\dg({}_\gamma \cF))\simeq\oplus_{i=1}^{n(\cF)}U(A_{\rho_i})$ in $\NChow(k)$. Therefore, making use of Proposition \ref{prop:cancellation2}, we conclude that the assignment $[{}_\gamma \cF]\mapsto [\oplus_{i=1}^{n(\cF)}U(A_{\rho_i})]= \Sigma_{i=1}^{n(\cF)}[U(A_{\rho_i})]$ gives rise to a (well-defined) motivic measure $K_0\mathrm{Var}(k)^{\mathrm{tw}} \to K_0(\mathrm{CSA}(k)^\oplus)$. Consequently, the proof of Theorem \ref{thm:main} follows now from the following result:
\begin{proposition}
The assignment $[A] \mapsto [U(A)]$ gives rise to a ring isomorphism $R_{\mathrm{B}}(k) \stackrel{\simeq}{\to} K_0(\mathrm{CSA}(k)^\oplus)$.
\end{proposition}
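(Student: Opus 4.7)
The plan is to first construct the map as a ring homomorphism out of $\bbZ[\mathrm{Br}(k)]$, then verify that it descends to $R_{\mathrm{B}}(k)$, and finally check it is bijective; the main obstacle will be injectivity.

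Start with well-definedness. Since Morita equivalent central simple $k$-algebras yield isomorphic noncommutative Chow motives, $U(A)\simeq U(A')$ in $\NChow(k)$ whenever $[A]=[A']$ in $\mathrm{Br}(k)$; combined with the symmetric monoidal isomorphism $U(A\otimes A')\simeq U(A)\otimes U(A')$, this promotes the assignment $[A]\mapsto [U(A)]$ to a ring homomorphism $\bbZ[\mathrm{Br}(k)]\to K_0(\mathrm{CSA}(k)^\oplus)$. To descend to $R_{\mathrm{B}}(k)$, I need $[U(k)]+[U(A\otimes A')]=[U(A)]+[U(A')]$ whenever $\gcd(\mathrm{ind}(A),\mathrm{ind}(A'))=1$. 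This reduces to the isomorphism $U(k)\oplus U(A\otimes A')\simeq U(A)\oplus U(A')$ in $\mathrm{CSA}(k)^\oplus$, which by Proposition \ref{prop:key} is equivalent to the statement that the multisets $\{[k]^p,[A\otimes A']^p\}$ and $\{[A]^p,[A']^p\}$ agree in $\mathrm{Br}(k)\{p\}$ for every prime $p$. Coprimality of indices forces at most one of $[A]^p,[A']^p$ to be nonzero, so both multisets coincide with $\{0,[A]^p+[A']^p\}$.

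Surjectivity is immediate, since every object of $\mathrm{CSA}(k)^\oplus$ is a finite direct sum of objects of the form $U(A)$, so $K_0(\mathrm{CSA}(k)^\oplus)$ is generated as a $\bbZ$-module by classes in the image of the map.

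For injectivity, the plan is to unravel the definition of the Grothendieck ring and reduce to a statement about isomorphisms in $\mathrm{CSA}(k)^\oplus$. Suppose $\Sigma_{j}n_j[A_j]\in R_{\mathrm{B}}(k)$ maps to zero. Splitting positive and negative coefficients and invoking the group-completion relation \eqref{eq:relation} together with the cancellation of Proposition \ref{prop:cancellation}, the claim reduces to: if $\oplus_{j=1}^{m}U(A_j)\simeq \oplus_{j=1}^{m}U(A'_j)$ in $\mathrm{CSA}(k)^\oplus$, then $\Sigma_{j}[A_j]=\Sigma_{j}[A'_j]$ in $R_{\mathrm{B}}(k)$. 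By Proposition \ref{prop:key}, the hypothesis converts into the assertion that the multisets $\{[A_j]^p\}_{j}$ and $\{[A'_j]^p\}_{j}$ coincide in $\mathrm{Br}(k)\{p\}$ for every prime $p$. An induction on the number of primes dividing $\mathrm{ind}(A)$, using the defining relation of $R_{\mathrm{B}}(k)$, yields the primary-decomposition identity
\[ [A]+(|S|-1)[k]=\Sigma_{p\in S}[A^{(p)}] \quad \text{in}\,\, R_{\mathrm{B}}(k), \]
where $A^{(p)}$ denotes any central simple $k$-algebra with $[A^{(p)}]=[A]^p$ and $S$ is any finite set of primes containing the prime divisors of $\mathrm{ind}(A)$. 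Applying this to both $\{A_j\}_j$ and $\{A'_j\}_j$ with a common sufficiently large $S$, and swapping the order of summation, gives
\[ \Sigma_{j}[A_j]+m(|S|-1)[k]=\Sigma_{p\in S}\Sigma_{j}[A_j^{(p)}]=\Sigma_{p\in S}\Sigma_{j}[(A'_j)^{(p)}]=\Sigma_{j}[A'_j]+m(|S|-1)[k], \]
so cancelling the $[k]$-term completes the argument. The hardest technical input here, namely the cancellation in Proposition \ref{prop:cancellation}, is already at our disposal; the remaining work lies in the primary-decomposition identity in $R_{\mathrm{B}}(k)$ and in converting between isomorphisms in $\mathrm{CSA}(k)^\oplus$ and equalities of $p$-primary multisets via Proposition \ref{prop:key}.
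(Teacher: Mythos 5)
Your proof is correct, and its skeleton matches the paper's: both rest on Proposition \ref{prop:key} to translate isomorphisms in $\mathrm{CSA}(k)^\oplus$ into prime-by-prime equality of multisets of Brauer classes, and both must then show that these ``permute the $p$-primary components'' relations coincide with the coprime-index relations defining $R_{\mathrm{B}}(k)$. The difference lies in how that equivalence is established. The paper works at the level of semi-rings, exhibiting $K_0(\mathrm{CSA}(k)^\oplus)^+$ as a quotient of $\bbN[\mathrm{Br}(k)]$, and its Lemma \ref{lem:relations} proves the equivalence of the two relation sets by reducing an arbitrary permutation $\sigma_p$ to transpositions and realizing each transposition as one explicit coprime-index relation built from the primary decompositions $D_i=\otimes_p D_i^p$. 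You instead construct the ring homomorphism $R_{\mathrm{B}}(k)\to K_0(\mathrm{CSA}(k)^\oplus)$ directly and prove injectivity via the identity $[A]+(|S|-1)[k]=\Sigma_{p\in S}[A^{(p)}]$ in $R_{\mathrm{B}}(k)$, which ``diagonalizes'' every class into its primary components; the prime-by-prime multiset equality then yields the conclusion by summing over $p\in S$, with no need to track the individual permutations $\sigma_p$. Both arguments use the same inputs (primary decomposition of division algebras plus the coprime-index relation), but your identity handles all primes at once and avoids the transposition bookkeeping, whereas the paper's route makes a small generating set of relations explicit. One cosmetic remark: in the injectivity reduction you do not actually need the full strength of Proposition \ref{prop:cancellation}, since the stabilizing object $N$ in the group-completion relation \eqref{eq:relation} is itself a finite sum of objects $U(D_l)$ and its contribution $\Sigma_l[D_l]$ can simply be cancelled in the group $R_{\mathrm{B}}(k)$; invoking the cancellation proposition is of course harmless.
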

\begin{proof}
Let us write $K_0(\mathrm{CSA}(k)^{\oplus})^+$ for the semi-ring of the additive symmetric monoidal category $\mathrm{CSA}(k)^{\oplus}$. Concretely, $K_0(\mathrm{CSA}(k)^{\oplus})^+$ is the set of isomorphism classes of the category $\mathrm{CSA}(k)^{\oplus}$ equipped with the addition, resp. multiplication, law induced by $\oplus$, resp. $\otimes$. Consider also the semi-ring $\bbN[\mathrm{Br}(k)]$ and the following (semi-ring) homomorphism: 
\begin{eqnarray}\label{eq:homo-induced}
\bbN[\mathrm{Br}(k)] \too K_0(\mathrm{CSA}(k)^{\oplus})^+ && \Sigma_{j=1}^m [A_j] \mapsto \Sigma_{j=1}^m [U(A_j)]\,.
\end{eqnarray}
The homomorphism \eqref{eq:homo-induced} is surjective. 
Moreover, thanks to Proposition \ref{prop:key}, it yields an isomorphism
\begin{equation}\label{eq:induced}
\bbN[\mathrm{Br}(k)]/ \{\Sigma^m_{j=1}[A_j] = \Sigma^m_{j=1} [A'_j]\,|\,\, \forall\,p \,\,\, \exists\,\sigma_p\,\,\,[A'_j]^p=[A_{\sigma_p(j)}]^p\,\,\,\forall\,1\leq j \leq m  \} \stackrel{\simeq}{\too} K_0(\mathrm{CSA}(k)^{\oplus})^+\,,
\end{equation} 
where $p$ is a prime number and $\sigma_p$ a permutation of the set $\{1, \ldots, m\}$. Thanks to Lemma \ref{lem:relations} below, the left-hand side of \eqref{eq:induced} may be replaced by the following semi-ring 
$$\bbN[\mathrm{Br}(k)]/\{ [A''] + [A\otimes A' \otimes A'']= [A\otimes A''] + [A\otimes A'']\,|\,(\mathrm{ind}(A), \mathrm{ind}(A'))=1\}\,,$$ 
where $A$ and $A'$ are central simple $k$-algebras with coprime indexes. Therefore, by passing to group-completion, we obtain an induced (ring) isomorphism:
\begin{equation}\label{eq:induced1}
\bbZ[\mathrm{Br}(k)]/ \langle[A''] + [A\otimes A' \otimes A''] -[A\otimes A'] - [A\otimes A'']\,|\,(\mathrm{ind}(A), \mathrm{ind}(A'))=1 \rangle \stackrel{\simeq}{\too} K_0(\mathrm{CSA}(k)^{\oplus})\,.
\end{equation}
Finally, the proof follows now from the fact that the left-hand side of \eqref{eq:induced1} agrees with the ring $R_{\mathrm{B}}(k)$. 
\end{proof}
\begin{lemma}\label{lem:relations}
The following two sets of relations on the semi-ring $\bbN[\mathrm{Br}(k)]$ are equivalent:
\begin{equation}\label{eq:rel111}
\{\Sigma^m_{j=1}[A_j] = \Sigma^m_{j=1} [A'_j]\,|\,\, \forall\,p \,\,\, \exists\,\sigma_p\,\,\,[A'_j]^p=[A_{\sigma_p(j)}]^p\,\,\,\forall\,1\leq j \leq m \}
\end{equation}
\begin{equation}\label{eq:rel22}
\{ [A''] + [A\otimes A' \otimes A'']= [A\otimes A''] + [A\otimes A'']\,|\,(\mathrm{ind}(A), \mathrm{ind}(A'))=1\}\,.
\end{equation}
\end{lemma}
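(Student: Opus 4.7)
The plan is to prove that the two sets of relations generate the same congruence on $\bbN[\mathrm{Br}(k)]$. The inclusion from \eqref{eq:rel22} into the congruence of \eqref{eq:rel111} is a routine verification: one first checks that \eqref{eq:rel111} is itself a congruence (closure under addition follows by concatenating permutations, and closure under multiplication by transferring the $\sigma_p$'s coordinate-wise). Then, given coprime $A,A'$ and arbitrary $A''$, at most one of $[A]^p,[A']^p$ is non-zero in $\mathrm{Br}(k)\{p\}$ for each prime $p$, so a direct computation shows that the multisets of $p$-primary components of $\{[A''],[A\otimes A'\otimes A'']\}$ and of $\{[A\otimes A''],[A'\otimes A'']\}$ coincide.

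For the reverse inclusion, the crux is to derive a single \emph{elementary swap} from \eqref{eq:rel22}. Given two central simple $k$-algebras $C_1,C_2$ and a prime $p$, use the primary decomposition to write $[C_i]=[E_i]+[F_i]$ with $E_i$ of $p$-primary index and $F_i$ of index coprime to $p$, and set $[D_1]:=[E_2]+[F_1]$, $[D_2]:=[E_1]+[F_2]$. I claim $[C_1]+[C_2]\equiv [D_1]+[D_2]$ modulo \eqref{eq:rel22}. This is witnessed by a single instance of \eqref{eq:rel22} with the choices $A:=E_1\otimes E_2^{\op}$, $A':=F_1\otimes F_2^{\op}$, and $A'':=E_2\otimes F_2$: the condition $(\mathrm{ind}(A),\mathrm{ind}(A'))=1$ holds because $\mathrm{ind}(A)$ is a $p$-power while $\mathrm{ind}(A')$ is coprime to $p$, and computing Brauer classes directly gives $[A'']=[C_2]$, $[A\otimes A'\otimes A'']=[C_1]$, $[A\otimes A'']=[D_2]$, and $[A'\otimes A'']=[D_1]$.

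With the elementary swap in hand, I would handle the general case by iteration. Given a relation $\Sigma_{j=1}^m [A_j]=\Sigma_{j=1}^m[A'_j]$ in \eqref{eq:rel111} with per-prime permutations $\sigma_p$, the essential feature is that a swap at prime $p$ between positions $i$ and $j$ alters only the $p$-primary components at those two positions, leaving all other $q$-primary components ($q\neq p$) untouched. Hence the primes in the (finite) support of the data can be processed independently: for each such $p$, factor $\sigma_p$ as a product of transpositions and apply the corresponding swaps to the formal sum. After this process, the $j$-th summand has $p$-primary component $[A_{\sigma_p(j)}]^p=[A'_j]^p$ for every prime $p$, which forces it to equal $[A'_j]$ via the primary decomposition $\mathrm{Br}(k)=\oplus_p \mathrm{Br}(k)\{p\}$. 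The main obstacle is isolating the swap as a single application of \eqref{eq:rel22}: naive pairwise applications with $A''=k$ only yield $2[k]+[C_1]+[C_2]\equiv 2[k]+[D_1]+[D_2]$, and the extra copies of $[k]$ cannot be cancelled in a semi-ring; this is precisely why the clever choice $A''=E_2\otimes F_2$ above is needed to absorb the correction terms in one shot.
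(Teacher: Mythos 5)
Your proposal is correct and follows essentially the same route as the paper's proof: both reduce \eqref{eq:rel111} to a single transposition at a single prime via factorization of the $\sigma_p$'s, and both realize that elementary swap as one instance of \eqref{eq:rel22} by splitting each Brauer class into its $p$-primary and prime-to-$p$ parts (your triple $A=E_1\otimes E_2^{\op}$, $A'=F_1\otimes F_2^{\op}$, $A''=E_2\otimes F_2$ is just a mirror image of the paper's choice, which takes $A''$ to be $C_1$ itself). The verification is sound, so nothing further is needed.
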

\begin{proof}
Note first that the set \eqref{eq:rel22} is contained in the set \eqref{eq:rel111}. Note also that since every permutation $\sigma_p$ can be written as a composition of transpositions, \eqref{eq:rel111} is equivalent to the following set of relations
 \begin{equation}\label{eq:rel3}
\{[A_1]+[A_2] = [A'_1]+[A'_2]\,\,|\,\, [A'_1]^q=[A_2]^q, [A'_2]^q=[A_1]^q\,\,\,\mathrm{and}\,\,\, [A'_1]^p=[A_1]^p, [A'_2]^p=[A_2]^p\,\,\forall\, p\neq q\}\,,
\end{equation}
where $q\neq p$ is a(ny) prime number. Therefore, it suffices to show that every relation in \eqref{eq:rel3} is a particular case of a relation in \eqref{eq:rel22}. Recall from the Artin-Wedderburn theorem that $A_1$, resp. $A_2$, may be written as the matrix algebra of a {\em unique} central division $k$-algebra $D_1$, resp. $D_2$. Let us write $D_1=\otimes_p D_1^p$, resp. $D_2=\otimes_p D_2^p$, for the associated $p$-primary decomposition; consult \cite[Prop.~4.5.16]{Gille}. By construction, we have $[D_1^p]=[D_1]^p=[A_1]^p$, resp. $[D_2^p]=[D_2]^p=[A_2]^p$. Now, consider the following central simple $k$-algebras:
\begin{eqnarray*}
A:=(D_1^q)^\op \otimes D_2^q & A':=(\otimes_{p\neq q} D_1^p)^\op \otimes(\otimes_{p\neq q} D_2^p) & A'':=A_1\,.
\end{eqnarray*}
Note that $(\mathrm{ind}(A),\mathrm{ind}(A'))=1$ and that for these choices the relation in \eqref{eq:rel22} reduces to the relation:
\begin{equation}\label{eq:relation-quasi}
[A_1] + [D_2] = [(\otimes_{p\neq q} D_1^p)\otimes D_2^q] + [D_1^q\otimes(\otimes_{p\neq q} D_2^p)]\,.
\end{equation}
Making use of the equalities $[D_2]=[A_2]$, $[(\otimes_{p\neq q} D_1^p)\otimes D_2^q]=[A'_1]$ and $[D_1^q\otimes(\otimes_{p\neq q} D_2^p)]=[A'_2]$, we hence conclude that \eqref{eq:relation-quasi} agrees with the relation in \eqref{eq:rel3}. This finishes the proof.
\end{proof}
\section{Properties of the Jacques Tits motivic measure}
\begin{proposition}\label{prop:auxiliar1}
Let $\{{}_\gamma \cF_j\}_{1\leq j \leq m}$ and $\{{}_{\gamma'} \cF'_j\}_{1\leq j \leq m'}$ be two families of twisted projective homogeneous varieties with Jacques Tits central simple $k$-algebras $\cup_{j=1}^m \{A_{\rho_{i_j}}\}_{1\leq i\leq n(\cF_j)}$ and $\cup_{j=1}^{m'}\{A'_{\rho'_{i_j}}\}_{1\leq i\leq n(\cF'_j)}$.
\begin{itemize}
\item[(i)] We have an isomorphism $U(\perf_\dg(\Pi^m_{j=1}{}_\gamma \cF_j))\simeq U(\perf_\dg(\Pi^{m'}_{j=1}{}_{\gamma'} \cF'_j))$ in the category $\NChow(k)$ if and only if $\mu_{\mathrm{JT}}([\Pi^m_{j=1}{}_\gamma \cF_j])=\mu_{\mathrm{JT}}([\Pi^{m'}_{j=1}{}_{\gamma'} \cF'_j])$ in $R_{\mathrm{B}}(k)$. 
\item[(ii)] If $\mu_{\mathrm{JT}}([\Pi^m_{j=1}{}_\gamma \cF_j])=\mu_{\mathrm{JT}}([\Pi^{m'}_{j=1}{}_{\gamma'} \cF'_j])$ in $R_{\mathrm{B}}(k)$, then $\langle \cup_{j=1}^m \{[A_{\rho_{i_j}}]\}_{1\leq i\leq n(\cF_j)} \rangle= \langle \cup_{j=1}^{m'}\{[A'_{\rho'_{i_j}}]\}_{1\leq i \leq n(\cF_j')} \rangle$.
\end{itemize}
\end{proposition}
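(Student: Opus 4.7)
The strategy is to reduce both assertions to computations inside the subcategory $\mathrm{CSA}(k)^\oplus$ of $\NChow(k)$, and then to invoke the machinery already developed in the proof of Theorem \ref{thm:main}.

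For part (i), I would first combine Remark \ref{rk:generalization} with the fact that $U(-)$ is symmetric monoidal and with the motivic decomposition $U(\perf_\dg({}_\gamma \cF))\simeq \oplus_i U(A_{\rho_i})$ used in the proof of Theorem \ref{thm:main}. Distributing $\otimes$ over $\oplus$, together with the compatibility $U(A)\otimes U(A')\simeq U(A\otimes A')$, this yields
$$ U(\perf_\dg(\textstyle\Pi^m_{j=1}\,{}_\gamma \cF_j))\,\simeq\, \oplus_{(i_1,\ldots,i_m)}\, U(A_{\rho_{i_1}}\otimes \cdots \otimes A_{\rho_{i_m}})\,,$$
so this motive lies inside $\mathrm{CSA}(k)^\oplus$. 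Under the ring isomorphism $R_{\mathrm{B}}(k)\simeq K_0(\mathrm{CSA}(k)^\oplus)$ proved at the end of Section \ref{sec:proof}, expanding the product $\prod_j \Sigma_i [A_{\rho_{i_j}}]$ identifies $\mu_{\mathrm{JT}}([\Pi^m_{j=1}{}_\gamma \cF_j])$ with the Grothendieck class of the above motive. Consequently, the equality $\mu_{\mathrm{JT}}([\Pi^m_{j=1}{}_\gamma \cF_j])=\mu_{\mathrm{JT}}([\Pi^{m'}_{j=1}{}_{\gamma'} \cF'_j])$ in $R_{\mathrm{B}}(k)$ is equivalent to equality of the two Grothendieck classes in $K_0(\mathrm{CSA}(k)^\oplus)$ and, by the injectivity of Proposition \ref{prop:cancellation2}, to equality in $K_0(\NChow(k))$. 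The content of the non-trivial direction of the iff is then to promote this $K_0$-equality to an honest $\NChow(k)$-isomorphism, which is exactly what the cancellation Proposition \ref{prop:cancellation} provides. The converse direction is immediate.

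For part (ii), I would feed the isomorphism furnished by part (i),
$$ \oplus_{(i_1,\ldots,i_m)}\, U(A_{\rho_{i_1}}\otimes \cdots \otimes A_{\rho_{i_m}})\,\simeq\, \oplus_{(i_1,\ldots,i_{m'})}\, U(A'_{\rho'_{i_1}}\otimes \cdots \otimes A'_{\rho'_{i_{m'}}})\,,$$
into Corollary \ref{cor:key}. This directly yields the equality, inside $\mathrm{Br}(k)$, of the subgroups generated by the Brauer classes of the multi-tensor products on each side. To conclude, I would use the fact that the first element of the canonical Steinberg basis corresponds to the trivial representation, so that $A_{\rho_0}=k$ in each factor (as already observed in the examples \S\ref{sub:Severi}, \S\ref{sub:Grass}, \S\ref{sub:quadrics} and \S\ref{sub:involution}). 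This lets one recover each individual class $[A_{\rho_{i_j}}]$ from the multi-tensor product with $\rho_0$ inserted in every other slot; conversely, additivity $[A\otimes B]=[A]+[B]$ in $\mathrm{Br}(k)$ shows every multi-tensor class belongs to $\langle \cup_j\{[A_{\rho_{i_j}}]\}_i\rangle$. The two subgroups therefore coincide respectively with $\langle \cup_j\{[A_{\rho_{i_j}}]\}_i\rangle$ and $\langle \cup_j\{[A'_{\rho'_{i_j}}]\}_i\rangle$, giving the claim.

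The hard part will be the cancellation step in part (i): in isolation, equality of classes in $K_0(\NChow(k))$ only guarantees isomorphism after stabilizing by some auxiliary noncommutative motive, which would be insufficient. Once Proposition \ref{prop:cancellation} is invoked to supply the required cancellation, everything else is bookkeeping: expanding $\prod_j \Sigma_i[A_{\rho_{i_j}}]$, and the elementary observation that subgroups of $\mathrm{Br}(k)$ generated by tensor products agree with those generated by the individual factors as soon as the trivial class is included.
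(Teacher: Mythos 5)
Your argument is correct and follows essentially the same route as the paper: Remark \ref{rk:generalization} together with the motivic decomposition places both motives in $\mathrm{CSA}(k)^{\oplus}$, the injectivity from Proposition \ref{prop:cancellation2} converts the equality in $R_{\mathrm{B}}(k)$ into an equality in $K_0(\NChow(k))$, the cancellation Proposition \ref{prop:cancellation} upgrades that to an isomorphism in $\NChow(k)$, and item (ii) is then Corollary \ref{cor:key}. Your explicit reduction from the subgroup generated by the tensor-product classes to the subgroup generated by the individual Tits classes (using the trivial Steinberg basis element with $A_{\rho_0}=k$ in the remaining slots) is a detail the paper leaves implicit, and you handle it correctly.
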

\begin{proof}
Note first that an iterated application of Remark \ref{rk:generalization} leads to the following isomorphisms:
\begin{equation}\label{eq:ncmotives1}
U(\perf_\dg(\Pi^m_{j=1}{}_\gamma \cF_j))\simeq \otimes^m_{j=1} U(\perf_\dg({}_\gamma \cF_j))\simeq \otimes^m_{j=1} \oplus_{i=1}^{n(\cF_j)}U(A_{\rho_{i_j}}) 
\end{equation}
\begin{equation}\label{eq:ncmotives2}
U(\perf_\dg(\Pi^{m'}_{j=1}{}_{\gamma'} \cF'_j))\simeq \otimes^{m'}_{j=1} U(\perf_\dg({}_{\gamma'} \cF'_j))\simeq \otimes^{m'}_{j=1} \oplus_{i=1}^{n(\cF'_j)}U(A'_{\rho'_{i_j}}) \,.
\end{equation}
Suppose that we have an isomorphism $U(\perf_\dg(\Pi^m_{j=1}{}_\gamma \cF_j))\simeq U(\perf_\dg(\Pi^{m'}_{j=1}{}_{\gamma'} \cF'_j))$ in the category $\NChow(k)$. Since the noncommutative Chow motives \eqref{eq:ncmotives1}-\eqref{eq:ncmotives2} belong to the subcategory $\mathrm{CSA}(k)^{\oplus}$, it follows from the construction of the Jacques Tits motivic measure that $\mu_{\mathrm{JT}}([\Pi^m_{j=1}{}_\gamma \cF_j])=\mu_{\mathrm{JT}}([\Pi^{m'}_{j=1}{}_{\gamma'} \cF'_j])$ in $R_{\mathrm{B}}(k)$. Conversely, if $\mu_{\mathrm{JT}}([\Pi^m_{j=1}{}_\gamma \cF_j])=\mu_{\mathrm{JT}}([\Pi^{m'}_{j=1}{}_{\gamma'} \cF'_j])$ in $R_{\mathrm{B}}(k)$, then it follows from the construction of the Jacques Tits motivic measure that $[U(\perf_\dg(\Pi^m_{j=1}{}_\gamma \cF_j))]=[U(\perf_\dg(\Pi^{m'}_{j=1}{}_{\gamma'} \cF'_j))]$ in $K_0(\NChow(k))$. By definition of $K_0(\NChow(k))$, this implies that there exists a noncommutative Chow motive $N\!\!M$ such that 
\begin{equation}\label{eq:iso-main111}
U(\perf_\dg(\Pi^m_{j=1}{}_\gamma \cF_j))\oplus N\!\!M\simeq U(\perf_\dg(\Pi^{m'}_{j=1}{}_{\gamma'} \cF'_j))\oplus N\!\!M\,.
\end{equation} 
Since the noncommutative Chow motives \eqref{eq:ncmotives1}-\eqref{eq:ncmotives2} belong to the subcategory $\mathrm{CSA}(k)^{\oplus}$, we hence conclude from the cancellation Proposition \ref{prop:cancellation} (applied to \eqref{eq:iso-main111}) that $U(\perf_\dg(\Pi^m_{j=1}{}_\gamma \cF_j))\simeq U(\perf_\dg(\Pi^{m'}_{j=1}{}_{\gamma'} \cF'_j))$ in the category $\NChow(k)$. This proves item (i). Item (ii) follows now from item (i), from the fact that the noncommutative Chow motives \eqref{eq:ncmotives1}-\eqref{eq:ncmotives2} belong to the subcategory $\mathrm{CSA}(k)^{\oplus}$, and from Corollary \ref{cor:key}.
\end{proof}
\begin{lemma}
The assignment $[{}_\gamma \cF] \mapsto n(\cF)$ gives rise to a motivic measure $\mu_\rho\colon K_0\mathrm{Var}(k)^{\mathrm{tw}} \to \bbZ$.
\end{lemma}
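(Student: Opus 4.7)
The plan is to factor $\mu_\rho$ through the Jacques Tits motivic measure $\mu_{\mathrm{JT}}$ constructed in Theorem \ref{thm:main}, by composing with an augmentation $\epsilon\colon R_{\mathrm{B}}(k)\to \bbZ$ that collapses every Brauer class to $1$.

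First I would define $\epsilon\colon R_{\mathrm{B}}(k)\to \bbZ$ as the map induced by the ring homomorphism $\bbZ[\mathrm{Br}(k)]\to \bbZ$ sending $[A]\mapsto 1$ for every central simple $k$-algebra $A$. This descends to $R_{\mathrm{B}}(k)$ because each defining relation $[k]+[A\otimes A']-[A]-[A']$ (with coprime indexes) is sent to $1+1-1-1=0$. Since the source map is visibly a ring homomorphism, so is $\epsilon$.

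Next I would set $\mu_\rho := \epsilon \circ \mu_{\mathrm{JT}}\colon K_0\mathrm{Var}(k)^{\mathrm{tw}}\to \bbZ$; as a composition of ring homomorphisms, this is a motivic measure. It remains to verify that $\mu_\rho([{}_\gamma \cF])=n(\cF)$ for every twisted projective homogeneous variety ${}_\gamma \cF$. By the construction of $\mu_{\mathrm{JT}}$ in Theorem \ref{thm:main}, we have
\[
\mu_{\mathrm{JT}}([{}_\gamma \cF]) \;=\; \sum_{i=1}^{n(\cF)}[A_{\rho_i}] \quad \in R_{\mathrm{B}}(k),
\]
where the sum runs over the canonical $Ch$-homogeneous basis $\{\rho_i\}_i$ of $R(\widetilde{P})$ over $R(\widetilde{G})$, which has cardinality $n(\cF)$ by definition. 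Applying $\epsilon$ then gives $\mu_\rho([{}_\gamma\cF])=\sum_{i=1}^{n(\cF)} 1 = n(\cF)$.

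There is no real obstacle here; the only point worth double-checking is compatibility with the multiplicative structure, which is automatic because both $\mu_{\mathrm{JT}}$ and $\epsilon$ are ring homomorphisms. As a sanity check, on a product ${}_\gamma\cF\times {}_{\gamma'}\cF'$ one gets $\mu_\rho([{}_\gamma\cF\times{}_{\gamma'}\cF'])=\mu_\rho([{}_\gamma\cF])\cdot\mu_\rho([{}_{\gamma'}\cF'])=n(\cF)\cdot n(\cF')=n(\cF\times\cF')$, consistent with the fact that the canonical basis of $R(\widetilde{P}\times\widetilde{P}')$ over $R(\widetilde{G}\times\widetilde{G}')$ is the tensor product of the two canonical bases.
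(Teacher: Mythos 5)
Your proposal is correct and is essentially the paper's own proof: the paper likewise defines $\mu_\rho$ as the composite of $\mu_{\mathrm{JT}}$ with the augmentation $\Sigma_j n_j[A_j]\mapsto \Sigma_j n_j$ on $R_{\mathrm{B}}(k)$. Your extra verifications (that the augmentation kills the defining relations and that the value on ${}_\gamma\cF$ is $n(\cF)$) are details the paper leaves implicit.
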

\begin{proof}
The ring $R_{\mathrm{B}}(k)$ comes equipped with the augmentation $\Sigma^m_{j=1} n_j [A_j] \mapsto \Sigma_{j=1}^m n_j$. By pre-composing this augmentation with the Jacques Tits motivic measure $\mu_{\mathrm{JT}}$, we hence obtain the motivic measure $\mu_\rho$.
\end{proof}
\section{Proof of Theorem \ref{thm:app1}}
{\bf Item (i).} Recall first that $\mathrm{dim}(\mathrm{SB}(A))=\mathrm{deg}(A)-1$. Following \S\ref{sub:Severi}, note that $\mu_\rho([\mathrm{SB}(A)])=\mathrm{deg}(A)$. Therefore, if $[\mathrm{SB}(A)]=[\mathrm{SB}(A')]$ in the Grothendieck ring of varieties $K_0\mathrm{Var}(k)$, we conclude that $\mathrm{deg}(A)=\mathrm{deg}(A')$. This is equivalent to the equality $\mathrm{dim}(\mathrm{SB}(A))=\mathrm{dim}(\mathrm{SB}(A'))$.

\smallskip

{\bf Item (ii).} If $[\mathrm{SB}(A)]=[\mathrm{SB}(A')]$ in the Grothendieck ring of varieties $K_0\mathrm{Var}(k)$, then it follows from Proposition \ref{prop:auxiliar1}(ii) that $\langle [A] \rangle=\langle [A']\rangle$. In particular, we have $\mathrm{per}(A)=\mathrm{per}(A')$.

\smallskip

{\bf Item (iii).} When $[A]\in {}_2\mathrm{Br}(k)$, it follows from the equality $\langle [A]\rangle = \langle [A']\rangle$ (proved in item (ii)) that $[A]=[A']$ in the Brauer group $\mathrm{Br}(k)$. Using the fact that $\mathrm{deg}(A)=\mathrm{deg}(A')$ (proved in item (i)), we hence conclude that $A\simeq A'$. This implies that $\mathrm{SB}(A)\simeq \mathrm{SB}(A')$.

\smallskip

{\bf Item (iv).} When $\mathrm{per}(A)\in \{3, 4, 5, 6\}$, it follows from the equality $\langle [A]\rangle =\langle [A']\rangle$ (proved in item (ii)) that $[A]$ is equal to $[A']$, to $-[A']$, to $2[A']$, or to $-2[A']$. In the case where $[A]=[A']$, a proof similar to the one of item (iii) implies that $\mathrm{SB}(A)\simeq \mathrm{SB}(A')$. In the case where $[A]=-[A']$, the Amitsur conjecture holds thanks to the work of Roquette \cite{Roquette}. Hence, following item (iv') of Remark \ref{rk:Amitsur}, we conclude that $\mathrm{SB}(A)$ and $\mathrm{SB}(A')$ are birational to each other. In the case where $[A]=2[A']$ (or $[A]=-2[A']$), the Amitsur conjecture holds thanks to the work of Tregub \cite{Tregub}. Hence, we conclude similarly that $\mathrm{SB}(A)$ and $\mathrm{SB}(A')$ are birational~to~each~other.
\section{Proof of Proposition \ref{prop:app2}}
{\bf Item (i).} If $[C(a,b)\times C(a',b')]=[C(a'',b'')\times C(a''',b''')]$ in the Grothendieck ring of varieties $K_0\mathrm{Var}(k)$, then it follows from Proposition \ref{prop:auxiliar1}(ii) that $\langle [Q], [Q']\rangle =\langle [Q''],[Q''']\rangle$. Since $[Q], [Q'], [Q''], [Q'''] \in {}_2\mathrm{Br}(k)$, the latter equality implies that $[Q]$ (or $[Q']$) is equal to $[Q'']$ or to $[Q''']$. Using the fact that $\mathrm{deg}(Q)=\mathrm{deg}(Q')=\mathrm{deg}(Q'')=\mathrm{deg}(Q''')=2$, we hence conclude that $Q$ (or $Q'$) is isomorphic to $Q''$ or to $Q'''$. Equivalently, the conic $C(a,b)$ (or $C(a',b')$) is isomorphic to $C(a'',b'')$ or to $C(a''',b''')$. 

\smallskip

{\bf Item (ii).} When the quaternion $k$-algebras $Q$ and $Q'$ are unlinked, i.e., when $Q\otimes Q'$ is a division $k$-algebra, we have $\mathrm{ind}(Q\otimes Q')=4$. Consequently, using the fact that $\mathrm{ind}(Q)=\mathrm{ind}(Q')=\mathrm{ind}(Q'')=\mathrm{ind}(Q''')=2$, we conclude from the equality $\langle [Q], [Q']\rangle =\langle [Q''],[Q''']\rangle$ (proved in item (i)) that $[Q]=[Q'']$ and $[Q']=[Q''']$ or that $[Q]=[Q''']$ and $[Q']=[Q'']$. Since $\mathrm{deg}(Q)=\mathrm{deg}(Q')=\mathrm{deg}(Q'')=\mathrm{deg}(Q''')=2$, this implies that $Q\simeq Q''$ and $Q'\simeq Q'''$ or that $Q\simeq Q'''$ and $Q'\simeq Q''$. Equivalently, we have $C(a,b)\simeq C(a'',b'')$ and $C(a',b')\simeq C(a''',b''')$ or $C(a',b')\simeq C(a''',b''')$ and $C(a',b')\simeq C(a'',b'')$. In both cases we have an isomorphism $C(a,b)\times C(a',b')\simeq C(a'',b'') \times C(a''',b''')$.
\section{Proof of Theorem \ref{thm:app3}}
{\bf Item (i).} Recall first that $\mathrm{dim}(\mathrm{Gr}(d;A))=d\times(\mathrm{deg}(A)-d)$. Following \S\ref{sub:Grass}, note that $\mu_\rho([\mathrm{Gr}(d;A)])=\binom{\mathrm{deg}(A)}{d}$ because this is the number of Young diagrams inside the rectangle with $d$ lines and $\mathrm{deg}(A)-d$ columns. Hence, if $[\mathrm{Gr}(d;A)]=[\mathrm{Gr}(d';A')]$ in the Grothendieck ring of varieties $K_0\mathrm{Var}(k)$, we conclude that $\binom{\mathrm{deg}(A)}{d}=\binom{\mathrm{deg}(A')}{d'}$. By definition of the binomial coefficients, this implies that $\mathrm{deg}(A)=\mathrm{deg}(A')$ and that $d=d'$ or that $d'=\mathrm{deg}(A)-d$. In both cases, we have $\mathrm{dim}(\mathrm{Gr}(d;A))=\mathrm{dim}(\mathrm{Gr}(d';A'))$.

\smallskip

{\bf Item (ii).} If $[\mathrm{Gr}(d;A)]=[\mathrm{Gr}(d';A')]$ in the Grothendieck ring of varieties $K_0\mathrm{Var}(k)$, then it follows from Proposition \ref{prop:auxiliar1}(ii) that $\langle [A] \rangle=\langle [A']\rangle$. In particular, we have $\mathrm{per}(A)=\mathrm{per}(A')$.

\smallskip

{\bf Item (iii).} When $[A]\in {}_2\mathrm{Br}(k)$, it follows from the equality $\langle [A]\rangle = \langle [A']\rangle$ (proved in item (ii)) that $[A]=[A']$ in the Brauer group $\mathrm{Br}(k)$. Using the fact that $\mathrm{deg}(A)=\mathrm{deg}(A')$ (proved in item (i)), we hence conclude that $A\simeq A'$. Since $d=d'$ or $d=\mathrm{deg}(A)-d'$, this implies that $\mathrm{Gr}(d;A)\simeq \mathrm{Gr}(d';A')$.
\section{Proof of Theorem \ref{thm:app4}}
{\bf Item (i).} Recall first that $\mathrm{dim}(Q_q)=n-2$. Following \S\ref{sub:quadrics}, note that $\mu_\rho(Q_q)$ is equal to $n$ when $n$ is even or to $n-1$ when $n$ is odd. Hence, if $[Q_q]=[Q_{q'}]$ in the Grothendieck ring of varieties $K_0\mathrm{Var}(k)$, we conclude that $n=n'$ when $n$ is even or that $n-1=n'-1$ when $n$ is odd. In both cases, we have $\mathrm{dim}(Q_q)=\mathrm{dim}(Q_{q'})$. This is equivalent to the equality $n=n'$.

\smallskip

{\bf Item (ii).} If $[Q_q]=[Q_{q'}]$ in the Grothendieck ring of varieties $K_0\mathrm{Var}(k)$, then it follows from Proposition \ref{prop:auxiliar1}(ii) that $\langle [C^+_0(q)]\rangle = \langle [C^+_0(q')]\rangle$ when $n$ is even or $\langle [C_0(q)]\rangle = \langle [C_0(q')]\rangle$ when $n$ is odd. Since $[C^+_0(q)], [C_0(q)] \in {}_2\mathrm{Br}(k)$, this implies that $[C^+_0(q)]=[C^+_0(q')]$ when $n$ is even or $[C_0(q)]=[C_0(q')]$ when $n$ is odd. Using the fact that $\mathrm{deg}(C_0^+(q))=2^{\frac{n}{2}-1}$, $\mathrm{deg}(C_0(q))=2^{\lfloor\frac{n}{2}\rfloor}$, and $n=n'$ (proved in item (i)), we hence conclude that $C^+_0(q)\simeq C^+_0(q')$ when $n$ is even or $C_0(q)\simeq C_0(q')$ when $n$ is odd.

\smallskip

{\bf Item (iii).} When $n=6$, the assignment $q\mapsto C_0^+(q)$ gives rise to a one-to-one correspondence between similarity classes of non-degenerate quadratic forms with trivial discriminant of dimension $6$ and isomorphism classes of quaternion algebras; consult \cite[Cor.~15.33]{Book-inv}. Consequently, the proof follows from the combination of item (ii) with the general fact that two quadratic forms $q$ and $q'$ are similar if and only if the associated quadric hypersurfaces $Q_q$ and $Q_{q'}$ are isomorphic.

\smallskip

{\bf Item (iv).} When $I^3(k)=0$, we have the following classification result: if $n=n'$ and $C^+_0(q)\simeq C^+_0(q')$ when $n$ is even or $C_0(q)\simeq C_0(q')$ when $n$ is odd, then the quadratic forms $q$ and $q'$ are similar; consult \cite[Thm.~3']{ELam}. Consequently, the proof follows from the combination of items (i)-(ii) with the general fact that two quadratic forms $q$ and $q'$ are similar if and only if the quadric hypersurfaces $Q_q$ and $Q_{q'}$ are isomorphic.

\section{Proof of items $\mathrm{(i)}$-$\mathrm{(ii)}$ of Theorem \ref{thm:prod-quadrics}}
{\bf Item (i).} Recall first that $\mathrm{dim}(\Pi_{j=1}^m Q_{q_j})= \Sigma_{j=1}^m \mathrm{dim}(Q_{q_j})=m\times (n-2)$. Following \S\ref{sub:quadrics}, note that $\mu_\rho([\Pi_{j=1}^m Q_{q_j}])=\Pi_{j=1}^m \mu_\rho([Q_{q_j}])$ is equal to $n^m$ when $n$ is even or to $(n-1)^m$ when $n$ is odd. Hence, if $[\Pi_{j=1}^m Q_{q_j}]=[\Pi_{j=1}^{m'} Q_{q'_j}]$ in the Grothendieck ring of varieties $K_0\mathrm{Var}(k)$, we conclude that $n^m=n^{m'}$ when $n$ is even or that $(n-1)^m=(n-1)^{m'}$ when $n$ is odd. In both cases, we have $\mathrm{dim}(\Pi_{j=1}^m Q_{q_j})=\mathrm{dim}(\Pi_{j=1}^{m'} Q_{q'_j})$. This is equivalent to the equality $m=m'$.

\smallskip

{\bf Item (ii).} If $[\Pi_{j=1}^m Q_{q_j}]=[\Pi_{j=1}^{m'}Q_{q'_j}]$ in the Grothendieck ring of varieties $K_0\mathrm{Var}(k)$, then it follows from Proposition \ref{prop:auxiliar1}(ii) that $\langle \{[C^+_0(q_j)]\}_{1\leq j\leq m} \rangle=\langle \{[C^+_0(q'_j)]\}_{1\leq j\leq m'} \rangle$ when $n$ is even or $\langle \{[C_0(q_j)]\}_{1\leq j\leq m} \rangle=\langle \{[C_0(q'_j)]\}_{1\leq j\leq m'} \rangle$ when $n$ is odd.
\section{Proof of items $\mathrm{(iii)}$-$\mathrm{(iv)}$-$\mathrm{(iv')}$ of Theorem \ref{thm:prod-quadrics}}
We start with some auxiliar results of independent interest:
\begin{proposition}\label{prop:3-equivalences}
Let $q$ and $q'$ be two non-degenerate quadratic forms with trivial discriminant of dimension $n$. When $n=6$ or $I^3(k)=0$, the following conditions are equivalent:
\begin{itemize}
\item[(i)] We have $Q_q\simeq Q_{q'}$.
\item[(ii)] We have an isomorphism $U(\perf_\dg(Q_q))\simeq U(\perf_\dg(Q_{q'}))$ in the category $\NChow(k)$.
\item[(iii)] We have $[C_0^+(q)]=[C_0^+(q')]$ when $n$ is even or $[C_0(q)]=[C_0(q')]$ when $n$ is odd.
\end{itemize}
\end{proposition}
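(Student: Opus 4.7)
The plan is to prove the three-way equivalence by the cycle $\mathrm{(i)} \Rightarrow \mathrm{(ii)} \Rightarrow \mathrm{(iii)} \Rightarrow \mathrm{(i)}$. The implication $\mathrm{(i)} \Rightarrow \mathrm{(ii)}$ is immediate from the functoriality of the assignment $X \mapsto U(\perf_\dg(X))$.

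For $\mathrm{(ii)} \Rightarrow \mathrm{(iii)}$, the starting point is the decomposition from \cite[Thm.~2.1]{Homogeneous} already used in the proof of Theorem \ref{thm:main}, namely
$$U(\perf_\dg(Q_q))\simeq\bigoplus_{i=0}^{n(\cF)-1}U(A_{\rho_i})\qquad\text{and}\qquad U(\perf_\dg(Q_{q'}))\simeq\bigoplus_{i=0}^{n(\cF)-1}U(A'_{\rho_i}),$$
with the Jacques Tits algebras $A_{\rho_i}$ computed explicitly in \S\ref{sub:quadrics}. When $n$ is even, each of these sums consists of $n-2$ copies of $U(k)$ together with $U(C_0^+(q))\oplus U(C_0^-(q))$ (resp. with $q$ replaced by $q'$); when $n$ is odd, one has $n-2$ copies of $U(k)$ together with $U(C_0(q))$. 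Since all these summands lie in the full subcategory $\mathrm{CSA}(k)^\oplus\subset\NChow(k)$, the isomorphism in $\mathrm{(ii)}$ is an isomorphism inside $\mathrm{CSA}(k)^\oplus$, and Proposition \ref{prop:key} produces, for every prime $p$, a permutation $\sigma_p$ matching the $p$-primary components of the Brauer classes on both sides. For $p\neq 2$ the statement is vacuous because Clifford algebras are $2$-torsion, while at $p=2$ the only nontrivial classes are those of the Clifford summands. Using that $C_0^+(q)$ and $C_0^-(q)$ share the same Brauer class (and likewise for $q'$), the matching forces $[C_0^+(q)]=[C_0^+(q')]$ in the even case, respectively $[C_0(q)]=[C_0(q')]$ in the odd case.

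For $\mathrm{(iii)} \Rightarrow \mathrm{(i)}$, I invoke the existing classification results used in the proof of Theorem \ref{thm:app4}. Because $\mathrm{deg}(C_0^+(q))=2^{n/2-1}$ and $\mathrm{deg}(C_0(q))=2^{\lfloor n/2\rfloor}$ depend only on the common dimension $n$, the Brauer equality in $\mathrm{(iii)}$ upgrades to a $k$-algebra isomorphism $C_0^+(q)\simeq C_0^+(q')$ or $C_0(q)\simeq C_0(q')$. When $n=6$ we appeal to \cite[Cor.~15.33]{Book-inv} to conclude that $q$ and $q'$ are similar; when $I^3(k)=0$ we appeal instead to \cite[Thm.~3']{ELam}. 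In both regimes, similarity of non-degenerate quadratic forms is equivalent to isomorphism of their associated projective quadric hypersurfaces, giving $Q_q\simeq Q_{q'}$.

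The main obstacle is the step $\mathrm{(ii)} \Rightarrow \mathrm{(iii)}$: one must carefully book-keep the many trivial summands $U(k)$ to ensure Proposition \ref{prop:key}'s permutation restricts to a matching between the Clifford summands, and use the fact that $C_0^+(q)$ and $C_0^-(q)$ have equal Brauer class to conclude the desired single equality $[C_0^\pm(q)]=[C_0^\pm(q')]$ rather than merely an equality of subgroups. The other two implications rely on structural results available in the literature and should be routine.
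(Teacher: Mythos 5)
Your proposal is correct and follows essentially the same route as the paper: the decomposition of $U(\perf_\dg(Q_q))$ into $U(k)^{\oplus(n-2)}$ plus the Clifford summands, the matching of Brauer classes via Proposition \ref{prop:key} together with the $2$-torsion of $[C_0^{\pm}(q)]$, and the classification results \cite[Cor.~15.33]{Book-inv} and \cite[Thm.~3']{ELam} for $\mathrm{(iii)}\Rightarrow\mathrm{(i)}$. The only cosmetic difference is that the paper routes $\mathrm{(ii)}\Rightarrow\mathrm{(iii)}$ through Proposition \ref{prop:auxiliar1} (equality of the generated subgroups, then $2$-torsion) rather than applying Proposition \ref{prop:key} directly to the multiset of summands, but the underlying ingredients are identical.
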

\begin{proof}
The implication (i) $\Rightarrow$ (ii) is clear. If we have an isomorphism $U(\perf_\dg(Q_q))\simeq U(\perf_\dg(Q_{q'})$ in the category $\NChow(k)$, then it follows from Proposition \ref{prop:auxiliar1} that $\langle [C_0^+(q)]\rangle = \langle [C_0^+(q')]\rangle$ when $n$ is even or $\langle [C_0(q)]\rangle = \langle [C_0(q')]\rangle$ when $n$ is odd. Using the fact that $[C_0^+(q)], [C_0(q)] \in {}_2\mathrm{Br}(k)$, we hence conclude that $[C^+_0(q)]=[C_0^+(q')]$ when $n$ is even or $[C_0(q)]=[C_0(q')]$ when $n$ is odd. This proves the implication (ii) $\Rightarrow$ (iii). In what concerns the implication (iii) $\Rightarrow$ (i), recall that $\mathrm{deg}(C_0^+(q))=\mathrm{deg}(C_0^+(q'))=2^{\frac{n}{2}-1}$ and that $\mathrm{deg}(C_0(q))=\mathrm{deg}(C_0(q'))=2^{\lfloor\frac{n}{2}\rfloor}$. Therefore, if $[C_0^+(q)]=[C_0^+(q')]$ when $n$ is even or $[C_0(q)]=[C_0(q')]$ when $n$ is odd, we have $C_0^+(q)\simeq C_0^+(q')$ when $n$ is even or $C_0(q)\simeq C_0(q')$ when $n$ is odd. When $n=6$, we hence conclude from the one-to-one correspondence $q\mapsto C_0^+(q)$ between similarity classes of non-degenerate quadratic forms with trivial discriminant of dimension $6$ and isomorphism classes of quaternion algebras (consult \cite[Cor.~15.33]{Book-inv}), that the quadratic forms $q$ and $q'$ are similar or, equivalently, that $Q_q\simeq Q_{q'}$. In the same vein, when $I^3(k)=0$, we conclude from the classification result \cite[Thm.~3']{ELam} (which asserts that if $C_0^+(q)\simeq C_0^+(q')$ when $n$ is even or $C_0(q)\simeq C_0(q')$ when $n$ is odd, then the quadratic forms $q$ and $q'$ are similar) that the quadratic forms $q$ and $q'$ are similar or, equivalently, that $Q_q\simeq Q_{q'}$.
\end{proof}
The next result may be understood as the $\otimes$-analogue of Proposition \ref{prop:cancellation}.
\begin{proposition}[$\otimes$-cancellation]\label{prop:cancellation22}
Let $\{A_j\}_{1\leq j \leq m}$ and $\{A'_j\}_{1\leq j \leq m'}$ be two families of central simple $k$-algebras and $q$ a non-degenerate quadratic form with trivial discriminant of dimension $n\geq 5$. If we have an isomorphism $\oplus_{j=1}^m U(A_j)\otimes U(\perf_\dg(Q_q))\simeq \oplus_{j=1}^{m'} U(A'_j) \otimes U(\perf_\dg(Q_q))$ in the category $\NChow(k)$, then $m=m'$ and $\oplus_{j=1}^m U(A_j)\simeq \oplus_{j=1}^{m} U(A'_j)$.
\end{proposition}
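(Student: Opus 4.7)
The plan is to expand the given isomorphism using the Jacques Tits decomposition of $U(\perf_\dg(Q_q))$ recalled in \S\ref{sub:quadrics} and then invoke Proposition \ref{prop:key}, supplemented by a short group-ring computation at the prime $2$. Since $U$ is symmetric monoidal and the tensor product of central simple $k$-algebras is central simple, for any central simple $k$-algebra $A$ there is an isomorphism $U(A)\otimes U(\perf_\dg(Q_q))\simeq U(A)^{\oplus(n-2)}\oplus U(A\otimes C_0^+(q))\oplus U(A\otimes C_0^-(q))$ when $n$ is even, and $U(A)\otimes U(\perf_\dg(Q_q))\simeq U(A)^{\oplus(n-2)}\oplus U(A\otimes C_0(q))$ when $n$ is odd. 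Applying this term-by-term to both sides of the hypothesis converts the given $\NChow(k)$-isomorphism into an isomorphism between two objects of the full subcategory $\mathrm{CSA}(k)^{\oplus}\subset \NChow(k)$.

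Now I invoke Proposition \ref{prop:key}. Counting direct summands on each side yields $mn=m'n$ or $m(n-1)=m'(n-1)$ according to the parity of $n$, hence $m=m'$. Proposition \ref{prop:key} also provides, for every prime $p$, a bijection matching $p$-primary Brauer classes. For odd $p$, the Clifford contributions vanish (since $[C_0^{\pm}(q)],[C_0(q)]\in{}_2\mathrm{Br}(k)$), so the matched multisets reduce to $(n-2+\epsilon)\cdot\{[A_j]^p\}_j$ and $(n-2+\epsilon)\cdot\{[A'_j]^p\}_j$ with $\epsilon\in\{1,2\}$; cancelling this positive multiplicity yields $\{[A_j]^p\}_j=\{[A'_j]^p\}_j$ and hence the required permutation $\sigma_p$.

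The main obstacle is $p=2$, where the Clifford class $\beta:=[C_0^+(q)]^2$ (even case) or $[C_0(q)]^2$ (odd case) is generically nontrivial. Setting $f:=\sum_{j=1}^m [A_j]^2$ and $f':=\sum_{j=1}^m [A'_j]^2$ in the group ring $\bbZ[\mathrm{Br}(k)\{2\}]$, the equality of multisets at $p=2$ translates into the identity $((n-2)+\epsilon[\beta])(f-f')=0$. Multiplying by $(n-2)-\epsilon[\beta]$ and using $[\beta]^2=1$ (as $\beta$ is $2$-torsion) gives $((n-2)^2-\epsilon^2)(f-f')=0$. The coefficient $(n-2)^2-\epsilon^2$ equals $(n-4)n$ when $n$ is even and $(n-3)(n-1)$ when $n$ is odd, and it is strictly positive under the hypothesis $n\geq 5$; this is precisely where that hypothesis enters. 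Since $\bbZ[\mathrm{Br}(k)\{2\}]$ is torsion-free as an abelian group, it follows that $f=f'$, delivering the missing permutation $\sigma_2$. A final application of the reverse implication of Proposition \ref{prop:key} then yields the desired isomorphism $\oplus_{j=1}^m U(A_j)\simeq \oplus_{j=1}^m U(A'_j)$ in $\NChow(k)$.
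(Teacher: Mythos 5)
Your proposal is correct, and the decisive step is handled by a genuinely different (and considerably shorter) argument than the paper's. Both proofs start identically: expand $U(\perf_\dg(Q_q))$ as $U(k)^{\oplus(n-2)}\oplus U(C_0^{+}(q))^{\oplus 2}$ (even case) or $U(k)^{\oplus(n-2)}\oplus U(C_0(q))$ (odd case), land in $\mathrm{CSA}(k)^{\oplus}$, invoke Proposition \ref{prop:key} to get $m=m'$ and to reduce everything to matching multisets of $p$-primary Brauer classes, and dispose of odd $p$ by observing that the Clifford classes are $2$-torsion. Where you diverge is at $p=2$: the paper runs an explicit combinatorial analysis, partitioning the Brauer classes into orbits of the involution induced by tensoring with $C_0^{+}(q)$ and comparing the resulting multiplicities $(n-2)n_t+2\underline{n}_t$ versus $2l_t$, whereas you encode the matched multisets as the single identity $\bigl((n-2)+\epsilon\beta\bigr)(f-f')=0$ in the group ring $\bbZ[\mathrm{Br}(k)\{2\}]$ and kill the factor by multiplying with the ``conjugate'' $(n-2)-\epsilon\beta$, using $\beta^2=1$ and torsion-freeness of the group ring. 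This is airtight: equality of multisets of group elements is exactly equality of the corresponding group-ring elements, $[A\otimes C_0^{+}(q)]^2$ corresponds to the product $[A]^2\cdot\beta$, and $(n-2)^2-\epsilon^2=n(n-4)$ resp. $(n-1)(n-3)$ is indeed positive exactly under the hypothesis $n\geq 5$ --- the same numerical threshold the paper needs to separate its multiplicity counts. Your version has the added merit of treating the degenerate case $\beta=1$ (i.e.\ $[C_0^{+}(q)]=[k]$) uniformly, which the paper splits off separately, and of making transparent that the only input is that $(n-2)+\epsilon\beta$ is not a zero divisor in $\bbZ[\mathrm{Br}(k)\{2\}]$.
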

\begin{proof}
We prove first the case where $n\geq 6$ is even. Recall from \cite[Example~3.8]{Homogeneous} that, since the central simple $k$-algebras $C_0^+(q)$ and $C_0^-(q)$ are isomorphic, we have the following computation:
$$U(\perf_\dg(Q_q))\simeq U(k)^{\oplus (n-2)} \oplus U(C_0^+(q)) \oplus U(C_0^-(q)) \simeq U(k)^{\oplus (n-2)} \oplus U(C_0^+(q))^{\oplus 2}\,.$$
Consequently, we obtain the following computation:
\begin{equation}\label{eq:computation111}
\oplus^m_{j=1} U(A_j) \otimes U(\perf_\dg(Q_q))\simeq \oplus^m_{j=1} U(A_j)^{\oplus (n-2)} \oplus \oplus^m_{j=1} U(A\otimes C_0^+(q))^{\oplus 2}\,.
\end{equation}
Making use of \eqref{eq:computation111}, the given isomorphism $ \oplus_{j=1}^{m} U(A_j) \otimes U(\perf_\dg(Q_q))\simeq \oplus_{j=1}^{m'} U(A'_j) \otimes U(\perf_\dg(Q_q))$ in the category $\NChow(k)$ may then be re-written as the following isomorphism
\begin{equation}\label{eq:iso-main}
\oplus^m_{j=1} U(A_j)^{\oplus (n-2)} \oplus \oplus^m_{j=1} U(A_j \otimes C_0^+(q))^{\oplus 2} \simeq \oplus^{m'}_{j=1} U(A'_j)^{\oplus (n-2)} \oplus \oplus^{m'}_{j=1} U(A'_j \otimes C_0^+(q))^{\oplus 2}
\end{equation}
in the category $\mathrm{CSA}(k)^{\oplus}$. Therefore, by applying Proposition \ref{prop:key} to the isomorphism \eqref{eq:iso-main}, we conclude that $(n-2)m+2m=(n-2)m'+2m'$, which implies that $m=m'$. In order to prove that $\oplus_{j=1}^m U(A_j)\simeq \oplus_{j=1}^m U(A'_j)$, we will also make use of Proposition \ref{prop:key}. Concretely, we need to show that for every prime number $p$ the following two sets of Brauer classes
\begin{eqnarray}\label{eq:sets0}
\{[A_1]^p, \ldots, [A_m]^p\} && \{[A'_1]^p, \ldots, [A'_m]^p\}
\end{eqnarray}
are the same up to permutation. Recall that $[C_0^+(q)] \in {}_2\mathrm{Br}(k)$. Therefore, when $p\neq 2$, the isomorphism \eqref{eq:iso-main} combined with Proposition \ref{prop:key} implies that the following two sets are the same up to permutation
\begin{eqnarray*}
\{\underbrace{[A_1]^p, \ldots, [A_m]^p}_{(n-2)}, \underbrace{[A_1]^p, \ldots, [A_m]^p}_{2}\} && \{\underbrace{[A'_1]^p, \ldots, [A'_m]^p}_{(n-2)}, \underbrace{[A'_1]^p, \ldots, [A'_m]^p}_{2}\} \,,
\end{eqnarray*}
where the numbers below the parenthesis denote the number of copies. Clearly, this implies that the above sets \eqref{eq:sets0} are also the same up to permutation. When $p=2$, the isomorphism \eqref{eq:iso-main} combined with Proposition \ref{prop:key} implies that the following two sets are the same up to permutation:
\begin{eqnarray}
\{\underbrace{[A_1]^2, \ldots, [A_m]^2}_{(n-2)}, \underbrace{[A_1\otimes C_0^+(q)]^2, \ldots, [A_m \otimes C_0^+(q)]^2}_{2}\} \label{eq:sets1}  \\
\{\underbrace{[A'_1]^2, \ldots, [A'_m]^2}_{(n-2)}, \underbrace{[A'_1\otimes C_0^+(q)]^2, \ldots, [A'_m\otimes C_0^+(q)]^2}_{2}\}\,.  \label{eq:sets11}
\end{eqnarray}
Note that in the case where $[C_0^+(q)]=[k]$, this also implies that the sets \eqref{eq:sets0} are the same up to permutation. Let us then assume that $[C_0^+(q)]\neq [k]$. In this case, each one of the sets \eqref{eq:sets1}-\eqref{eq:sets11} is equipped with a non-trivial involution induced by tensoring with $C_0^+(q)$ (we are implicitly ignoring the number of copies of each Brauer class). In particular, we have $[A_j \otimes C_0^+(q)]^2\neq [A_j]^2$ for every $1\leq j \leq m$ and if there exist integers $r$ and $s$ such that $[A_r \otimes C_0^+(q)]^2=[A_s]^2$, then $[A_r]^2=[A_s\otimes C_0^+(q)]^2$. Consequently, there exist disjoint subsets $\{j_1, \underline{j}_1, \ldots, j_r, \underline{j}_r\}$ and $\{i_1, \ldots, i_s\}$ of the set $\{1, \ldots, m\}$ and integers $n_1, \underline{n}_1, \ldots, n_r, \underline{n}_r\geq 1$ and $l_1, \ldots, l_s\geq 1$ such that \eqref{eq:sets1} agrees with the following set of {\em distinct} Brauer classes:
\begin{equation}\label{eq:sets3}
\cup_{t=1}^r \{\!\!\!\!\!\!\underbrace{[A_{j_t}]^2}_{(n-2)n_t+2\underline{n}_t}\!\!\!\!\!\!,\!\!\!\!\!\!\overbrace{[A_{\underline{j}_t}]^2}^{(n-2)\underline{n}_t+2n_t}\!\!\!\!\!\!\} \cup \cup_{t=1}^s \{\underbrace{[A_{i_t}]^2}_{(n-2)l_t}, \underbrace{[A_{i_t}\otimes C_0^+(q)]^2}_{2l_t}\}\,.
\end{equation}
Similarly, there exist disjoint subsets $\{j'_1, \underline{j}'_1, \ldots, j'_{r'}, \underline{j}'_{r'}\}$ and $\{i'_1, \ldots, i'_{s'}\}$ of the set $\{1, \ldots, m\}$ and integers $n'_1, \underline{n}'_1, \ldots, n'_{r'}, \underline{n}'_{r'}\geq 1$ and $l'_1, \ldots, l'_{s'}\geq 1$ such that \eqref{eq:sets11} agrees with the set of {\em distinct} Brauer classes:
\begin{equation}\label{eq:sets4}
\cup_{t=1}^{r'} \{\!\!\!\!\!\!\underbrace{[A'_{j'_t}]^2}_{(n-2)n'_t+2\underline{n}'_t}\!\!\!\!\!\!,\!\!\!\!\!\!\overbrace{[A'_{\underline{j}'_t}]^2}^{(n-2)\underline{n}'_t+2n'_t}\!\!\!\!\!\!\} \cup \cup_{t=1}^{s'} \{\underbrace{[A'_{i'_t}]^2}_{(n-2)l'_t}, \underbrace{[A'_{i'_t}\otimes C_0^+(q)]^2}_{2l'_t}\}\,.
\end{equation}
Note that, under these notations, the non-trivial involution on \eqref{eq:sets3} (we are implicitly ignoring the number of copies of each Brauer class) interchanges $[A_{j_t}]^2$ with $[A_{\underline{j}_t}]^2$ and $[A_{i_t}]^2$ with $[A_{i_t}\otimes C_0^+(q)]^2$; similarly for the non-trivial involution on \eqref{eq:sets4}. Note also that, under these notations, the above sets \eqref{eq:sets0} correspond to the following sets of Brauer classes (the Brauer classes in each one of the two sets are distinct):
\begin{eqnarray}\label{eq:distinct}
\cup_{t=1}^{r} \{\underbrace{[A_{j_t}]^2}_{n_t}, \underbrace{[A_{\underline{j}_t}]^2}_{\underline{n}_t}\} \cup \cup_{t=1}^{s} \{\underbrace{[A_{i_t}]^2}_{l_t}\} && \cup_{t=1}^{r'} \{\underbrace{[A'_{j'_t}]^2}_{n'_t}, \underbrace{[A'_{\underline{j}'_t}]^2}_{\underline{n}'_t}\} \cup \cup_{t=1}^{s'} \{\underbrace{[A'_{i'_t}]^2}_{l'_t}\}\,.  
\end{eqnarray}
Now, recall that the above sets \eqref{eq:sets3}-\eqref{eq:sets4} are the same up to permutation. In other words, there exists a permutation with identifies the distinct Brauer classes of the set \eqref{eq:sets3} with the distinct Brauer classes of the set \eqref{eq:sets4}. Therefore, making use of the non-trivial involutions on \eqref{eq:sets3}-\eqref{eq:sets4}, of the precise number of copies of each Brauer class, and of the assumption that $n\geq 6$ is even, we hence conclude that the following sets of Brauer classes are the same up to permutation:
\begin{equation}\label{eq:sets6}
\cup_{t=1}^r \{\!\!\!\!\!\!\underbrace{[A_{j_t}]^2}_{(n-2)n_t+2\underline{n}_t}\!\!\!\!\!\!,\!\!\!\!\!\!\overbrace{[A_{\underline{j}_t}]^2}^{(n-2)\underline{n}_t+2n_t}\!\!\!\!\!\!\}  \,\,\,\,  \cup_{t=1}^{r'} \{\!\!\!\!\!\!\underbrace{[A'_{j'_t}]^2}_{(n-2)n'_t+2\underline{n}'_t}\!\!\!\!\!\!,\!\!\!\!\!\! \overbrace{[A'_{\underline{j}'_t}]^2}^{(n-2)\underline{n}'_t+2n'_t}\!\!\!\!\!\!\}\quad \mathrm{resp.} \quad \cup_{t=1}^s \{\underbrace{[A_{i_t}]^2}_{(n-2)l_t}\} \,\,\,  \cup_{t=1}^{s'} \{\underbrace{[A'_{i'_t}]^2}_{(n-2)l'_t}\}\,.
\end{equation}
Making use once again of the non-trivial involution on the left-hand side of \eqref{eq:sets6} and of the precise number of copies of each Brauer class, we observe that $r=r'$ and that the sets $\{n_1, \underline{n}_1, \ldots, n_r, \underline{n}_r\}$ and $\{n'_1, \underline{n}'_1, \ldots, n'_{r'}, \underline{n}'_{r'}\}$ are the same up to permutation. In the same vein, we conclude from the right-hand side of \eqref{eq:sets6} that $s=s'$ and that the sets $\{l_1, \ldots, l_s\}$ and $\{l'_1, \ldots, l'_{s'}\}$ are the same up to permutation. This implies that the following sets of Brauer classes are the same up to permutation:
\begin{equation}\label{eq:sets8}
\cup_{t=1}^r \{\underbrace{[A_{j_t}]^2}_{n_t}, \underbrace{[A_{\underline{j}_t}]^2}_{\underline{n}_t}\}  \,\,\,\, \cup_{t=1}^{r'} \{\underbrace{[A'_{j'_t}]^2}_{n'_t}, \underbrace{[A'_{\underline{j}'_t}]^2}_{\underline{n}'_t}\} \quad \mathrm{resp.} \quad \cup_{t=1}^s \{\underbrace{[A_{i_t}]^2}_{l_t}\} \,\,\,  \cup_{t=1}^{s'} \{\underbrace{[A'_{i'_t}]^2}_{l'_t}\}\,.
\end{equation} 
Consequently, by concatenating the permutations provided by \eqref{eq:sets8}, we hence obtain a permutation which identifies the left-hand side of \eqref{eq:distinct} with the right-hand side of \eqref{eq:distinct}. In other words, the two sets in \eqref{eq:distinct} are the same up to permutation. This proves the case where $n\geq 6$ is even. The proof of the case where $n\geq 5$ is odd is similar: simply replace the above isomorphism $U(\perf_\dg(Q_q))\simeq U(k)^{\oplus (n-2)} \oplus U(C_0^+(q))^{\oplus 2}$ by the isomorphism $U(\perf_\dg(Q_q))\simeq U(k)^{\oplus (n-2)} \oplus U(C_0(q))$ and perform all the subsequent computations.
\end{proof}
\begin{lemma}\label{lem:implications}
Given integers $n \geq 5$ and $m,l\geq 0$, we have the implications (consult Notation \ref{not:sums}):
\begin{equation}\label{eq:implications}
\begin{cases}
(\Sigma^1_{\mathrm{even}}(m,n,l)> \Sigma^2_{\mathrm{even}}(m,n,l)) \Rightarrow (\Sigma^1_{\mathrm{even}}(m-1,n,l)> \Sigma^2_{\mathrm{even}}(m-1,n,l)) & \mathrm{n}\,\,\mathrm{even} \\
(\Sigma^1_{\mathrm{odd}}(m,n,l)> \Sigma^2_{\mathrm{odd}}(m,n,l)) \Rightarrow (\Sigma^1_{\mathrm{odd}}(m-1,n,l)> \Sigma^2_{\mathrm{odd}}(m-1,n,l)) & \mathrm{n}\,\,\mathrm{odd}\,.
\end{cases}
\end{equation}
\end{lemma}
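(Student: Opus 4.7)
The plan is to prove the implication by a direct algebraic manipulation that exploits the very different ways in which $m$ enters into $\Sigma^1$ and $\Sigma^2$. In each of $\Sigma^1_{\mathrm{odd}}$, $\Sigma^1_{\mathrm{even}}$, $\Sigma^2_{\mathrm{odd}}$, $\Sigma^2_{\mathrm{even}}$, every summand is a product of a binomial, a power of $2$, and a power of $(n-2)$, where the exponents depend linearly on $m$, $r$, $l$. So passing from $m$ to $m-1$ simply rescales each summand by a constant factor (either $1/(n-2)$, or $1/2$, or $1$).

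Concretely, I would first treat the odd case. Write $\Sigma^1_{\mathrm{odd}}(m,n,l)=A_1+B_1$ with
$$A_1:=\Sigma_{r=0}^{\lfloor l/2\rfloor}\tbinom{l}{2r}(n-2)^{m-(2r+1)},\qquad B_1:=\Sigma_{r=0}^{\lfloor l/2\rfloor}\tbinom{l}{2r+1}(n-2)^{l-(2r+1)},$$
so that $A_1$ scales in $m$ through $(n-2)^m$ while $B_1$ is \emph{independent} of $m$. Set $C_1:=\Sigma^2_{\mathrm{odd}}(m,n,l)$, which scales in $m$ through $(n-2)^m$. Then $\Sigma^1_{\mathrm{odd}}(m-1,n,l)=A_1/(n-2)+B_1$ and $\Sigma^2_{\mathrm{odd}}(m-1,n,l)=C_1/(n-2)$, so the desired inequality at $m-1$, after multiplying by $n-2>0$, is $A_1+(n-2)B_1>C_1$. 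Since $B_1\geq 0$ and $n-2\geq 3\geq 1$, we have $(n-2)B_1\geq B_1$, whence $A_1+(n-2)B_1\geq A_1+B_1>C_1$ by hypothesis.

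Next, for the even case, write $\Sigma^1_{\mathrm{even}}(m,n,l)=A_e+B_e$ with
$$A_e:=\Sigma_{r}\tbinom{l}{2r}2^{2r+1}(n-2)^{m-(2r+1)},\qquad B_e:=\Sigma_{r}\tbinom{l}{2r+1}2^{m-l+(2r+1)}(n-2)^{l-(2r+1)},$$
so that $A_e$ scales in $m$ through $(n-2)^m$ while $B_e$ scales in $m$ through $2^m$. Set $C_e:=\Sigma^2_{\mathrm{even}}(m,n,l)$; since every term of $C_e$ carries a factor $(n-2)^{m-(2r+2)}$ and no factor $2^m$, we get $\Sigma^2_{\mathrm{even}}(m-1,n,l)=C_e/(n-2)$, together with $\Sigma^1_{\mathrm{even}}(m-1,n,l)=A_e/(n-2)+B_e/2$. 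Multiplying the desired inequality by $n-2>0$, it becomes $A_e+\tfrac{n-2}{2}B_e>C_e$. Since $n$ is even and $\geq 6$, we have $(n-2)/2\geq 2\geq 1$, so $\tfrac{n-2}{2}B_e\geq B_e$, and hence $A_e+\tfrac{n-2}{2}B_e\geq A_e+B_e>C_e$ by hypothesis.

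I do not expect any serious obstacle: the argument is essentially a one-line monotonicity check once the decomposition of $\Sigma^1$ into an ``$m$-dependent'' and an ``$m$-independent'' piece is made. The only point that requires attention is keeping track of exponents (in particular handling $r=\lfloor l/2\rfloor$ where $\binom{l}{2r+1}$ may vanish) so that the decompositions $\Sigma^1=A+B$ and $\Sigma^2=C$ are correctly identified, and verifying that the bound $n\geq 5$ (i.e.\ $n-2\geq 3$ in the odd case and $(n-2)/2\geq 2$ in the even case) suffices in each case.
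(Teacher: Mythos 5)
Your proof is correct and follows essentially the same strategy as the paper: split $\Sigma^1$ into the piece scaling through $(n-2)^m$ and the remaining piece, record how each piece rescales when $m\mapsto m-1$, and conclude by a monotonicity comparison. In fact your treatment of the even case is more careful than the paper's: the paper asserts that the second summand $\Sigma^{1,2}_{\mathrm{even}}$ rescales by $1/(n-2)$, whereas by Notation \ref{not:sums} its $m$-dependence sits in the factor $2^{m-l+(2r+1)}$, so it rescales by $1/2$ as you say; your subsequent estimate using $(n-2)/2\geq 1$ correctly closes the resulting gap, so the lemma holds either way.
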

\begin{proof}
Consider the following notations:
\begin{eqnarray*}
\Sigma^{1,1}_{\mathrm{even}}(m,n,l) & := &  \Sigma^{\lfloor l/2 \rfloor}_{r=0} \big(\tbinom{l}{2r} \times 2^{2r+1} \times (n-2)^{m-(2r+1)}\big) \\
\Sigma^{1,2}_{\mathrm{even}}(m,n,l) & := & \Sigma^{\lfloor l/2 \rfloor}_{r=0} \big(\tbinom{l}{2r+1} \times 2^{m-l+ (2r+1)} \times (n-2)^{l-(2r+1)} \big) \\
\Sigma^{1,1}_{\mathrm{odd}}(m,n,l) & := & \Sigma^{\lfloor l/2 \rfloor}_{r=0} \big(\tbinom{l}{2r} \times (n-2)^{m-(2r+1)}\big)  \\
\Sigma^{1,2}_{\mathrm{odd}}(m,n,l) & := & \Sigma^{\lfloor l/2 \rfloor}_{r=0} \big(\tbinom{l}{2r+1} \times (n-2)^{l-(2r+1)} \big) \,.
\end{eqnarray*}
Note that $\Sigma^1_{\mathrm{even}}(m,n,l)= \Sigma^{1,1}_{\mathrm{even}}(m,n,l) + \Sigma^{1,2}_{\mathrm{even}}(m,n,l)$ and $\Sigma^1_{\mathrm{odd}}(m,n,l)= \Sigma^{1,1}_{\mathrm{odd}}(m,n,l) + \Sigma^{1,2}_{\mathrm{odd}}(m,n,l)$. Note also that we have the following relations:
\begin{eqnarray*}
\Sigma^{1,1}_{\mathrm{even}}(m-1,n,l)= \frac{\Sigma^{1,1}_{\mathrm{even}}(m,n,l)}{(n-2)} & \Sigma^{1,1}_{\mathrm{odd}}(m-1,n,l)= \frac{\Sigma^{1,1}_{\mathrm{odd}}(m,n,l)}{(n-2)} &
\Sigma^{1,2}_{\mathrm{even}}(m-1,n,l)= \frac{\Sigma^{1,2}_{\mathrm{even}}(m,n,l)}{(n-2)} \\
 \Sigma^{1,2}_{\mathrm{odd}}(m-1,n,l)= \Sigma^{1,2}_{\mathrm{odd}}(m,n,l) &
 \Sigma^{2}_{\mathrm{even}}(m-1,n,l)= \frac{\Sigma^{2}_{\mathrm{even}}(m,n,l)}{(n-2)} 
&
\Sigma^{2}_{\mathrm{odd}}(m-1,n,l)= \frac{\Sigma^{2}_{\mathrm{odd}}(m,n,l)}{(n-2)} \,.
\end{eqnarray*}
By combining them, we obtain the above implications \eqref{eq:implications}.
\end{proof}
We now have the ingredients necessary to prove items (iii)-(iv)-(iv') of Theorem \ref{thm:prod-quadrics}. If $[\Pi^m_{j=1}Q_{q_j})]=[\Pi^{m'}_{j=1}Q_{q'_j})]$ in the Grothendieck ring of varieties $K_0\mathrm{Var}(k)$, then it follows from Theorem \ref{thm:prod-quadrics}(i) that $m=m'$. Moreover, we have $\mu_{\mathrm{JT}}([\Pi^m_{j=1}Q_{q_j})])=\mu_{\mathrm{JT}}([\Pi^m_{j=1}Q_{q'_j})])$ in $R_{\mathrm{B}}(k)$. Thanks to Proposition \ref{prop:auxiliar1}(i), the latter equality holds if and only if we have an isomorphism $U(\perf_\dg(\Pi^m_{j=1}Q_{q_j}))\simeq U(\perf_\dg(\Pi^m_{j=1}Q_{q'_j}))$ in $\NChow(k)$. Hence, the proof of items (iii)-(iv)-(iv') of Theorem \ref{thm:prod-quadrics} follows now from the following result:
\begin{theorem}\label{thm:auxiliar}
Let $\{q_j\}_{1\leq j \leq m}$ and $\{q'_j\}_{1\leq j \leq m}$ be two families of non-degenerate quadratic forms with trivial discriminant of dimension $n\geq 5$. If we have an isomorphism $U(\perf_\dg(\Pi^m_{j=1}Q_{q_j}))\simeq U(\perf_\dg(\Pi^m_{j=1}Q_{q'_j}))$ in the category $\NChow(k)$, then the following holds:
\begin{itemize}
\item[(iii)] When $n=6$ and $m\leq 5$, we have $\Pi_{j=1}^m Q_{q_j}\simeq \Pi_{j=1}^{m}Q_{q'_j}$.
\item[(iv)] When $I^3(k)=0$ and $m \leq 5$, we have $\Pi_{j=1}^m Q_{q_j}\simeq \Pi_{j=1}^{m}Q_{q'_j}$.
\item[(iv')] When $I^3(k)=0$, $m\geq 6$, and the following extra condition holds (consult Notation \ref{not:sums})
\begin{equation}\label{eq:condition-sums1}
\begin{cases} \Sigma^1_{\mathrm{even}}(m,n,l)> \Sigma^2_{\mathrm{even}}(m,n,l)\,\,\,\mathrm{for}\,\,\,\mathrm{all}\,\,\,2 \leq l \leq m-3 & \quad \mathrm{n}\,\,\mathrm{even} \\
\Sigma^1_{\mathrm{odd}}(m,n,l)> \Sigma^2_{\mathrm{odd}}(m,n,l)\,\,\,\mathrm{for}\,\,\,\mathrm{all}\,\,\,2 \leq l \leq m-3 & \quad \mathrm{n}\,\,\mathrm{odd}\,,
\end{cases}
\end{equation}
we also have $\Pi_{j=1}^m Q_{q_j}\simeq \Pi_{j=1}^{m} Q_{q'_j}$.
\end{itemize}
\end{theorem}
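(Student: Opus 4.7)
The argument proceeds by induction on $m$, with base case $m=1$ being Proposition \ref{prop:3-equivalences}. The inductive mechanism is the following: if we can exhibit indices $j, j'$ with $Q_{q_j} \simeq Q_{q'_{j'}}$, then applying the $\otimes$-cancellation (Proposition \ref{prop:cancellation22}) to the given isomorphism — after rewriting $U(\perf_\dg(\Pi_{k=1}^m Q_{q_k})) \simeq U(\perf_\dg(\Pi_{k\neq j} Q_{q_k})) \otimes U(\perf_\dg(Q_{q_j}))$ via Remark \ref{rk:generalization}, and using that $U(\perf_\dg(\Pi_{k\neq j} Q_{q_k}))$ belongs to $\mathrm{CSA}(k)^\oplus$ — reduces us to an isomorphism of $(m-1)$-fold products in $\NChow(k)$. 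The inductive hypothesis then delivers $\Pi_{k\neq j} Q_{q_k} \simeq \Pi_{k\neq j'} Q_{q'_k}$, and reinstating the matched quadric yields the conclusion.

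Thus the whole problem reduces to producing a single matched pair of quadrics. To that end, iterate Remark \ref{rk:generalization} and plug in the formula recalled in the proof of Proposition \ref{prop:cancellation22}, namely $U(\perf_\dg(Q_q)) \simeq U(k)^{\oplus (n-2)} \oplus U(C_0^+(q))^{\oplus 2}$ (even $n$) or $\simeq U(k)^{\oplus(n-2)} \oplus U(C_0(q))$ (odd $n$). Expanding the tensor product yields, in the even case,
\[
U(\perf_\dg(\Pi_{j=1}^m Q_{q_j})) \;\simeq\; \bigoplus_{S \subseteq \{1,\ldots,m\}} U\!\left(\bigotimes_{j \in S} C_0^+(q_j)\right)^{\oplus (n-2)^{m-|S|}\,2^{|S|}},
\]
and an analogous decomposition for the primed side; both sides visibly lie in $\mathrm{CSA}(k)^\oplus$. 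Since all the Brauer classes involved are $2$-torsion, applying Proposition \ref{prop:key} at the prime $p=2$ (the condition being vacuous at every other prime) translates the given motivic isomorphism into an equality, as multi-sets in ${}_2\mathrm{Br}(k)$ with multiplicities, of the collections $\{[B_S]\}_S$ and $\{[B'_S]\}_S$, where $B_S := \otimes_{j\in S} C_0^+(q_j)$ and $B'_S := \otimes_{j \in S} C_0^+(q'_j)$. The odd case is analogous with $C_0$ in place of $C_0^+$ and multiplicity factor $1$ in place of $2$.

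From this multi-set equality we must extract the equality, up to permutation, of $\{[C_0^+(q_j)]\}_{j=1}^m$ and $\{[C_0^+(q'_j)]\}_{j=1}^m$; once this is obtained, Proposition \ref{prop:3-equivalences} (valid under either $n=6$ or $I^3(k)=0$) converts it into an isomorphism $Q_{q_j} \simeq Q_{q'_{\sigma(j)}}$ for every $j$, which is more than enough to run the induction above. For items (iii) and (iv), the hypothesis $m \leq 5$ keeps the combinatorics small: one inspects the multiplicities carried by $[k]$ and by the singleton-subset classes $[C_0^+(q_j)]$, and rules out by hand the few ways different subsets can collide to the same Brauer class. For item (iv'), the sums $\Sigma^1_{\mathrm{even}/\mathrm{odd}}$ and $\Sigma^2_{\mathrm{even}/\mathrm{odd}}$ record the aggregate multiplicities of the Brauer classes surviving a potential linear relation $\sum_{j\in T}[C_0^+(q_j)] = 0$ on a subset $T$ of size $l$, split by the parity of $|S \cap T|$. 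Condition \eqref{eq:extra-sums} is precisely the quantitative statement that the ``true'' multiplicities of the individual classes cannot be drowned out by spurious coincidences arising from such hypothetical relations; Lemma \ref{lem:implications} then propagates the inequality through the induction step.

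\textbf{Main obstacle.} The crux is the combinatorial extraction just described: recovering the unordered tuple $\{[C_0^\pm(q_j)]\}_j$ from the multi-set of all $2^m$ subset-products $\{[B_S]\}_S$. In the absence of linear relations among the $[C_0^\pm(q_j)]$ in ${}_2\mathrm{Br}(k)$, the singleton subsets $S=\{j\}$ already isolate each class with a distinctive multiplicity; the real difficulty is when relations collapse many subsets into one Brauer class, at which point one must track the exact multiplicities carefully. For $m \leq 5$ this is tractable by direct enumeration, whereas for $m \geq 6$ the bound \eqref{eq:extra-sums} is exactly what guarantees that the true singleton contributions dominate their phantom counterparts, permitting the induction to proceed.
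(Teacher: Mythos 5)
Your plan follows the paper's proof essentially step for step: the same subset-product decomposition of $U(\perf_\dg(\Pi_{j}Q_{q_j}))$ into $\oplus_{S}U(\otimes_{s\in S}C_0^{+}(q_s))$ with multiplicities $2^{\#(S)}(n-2)^{m-\#(S)}$, the same reduction via Proposition \ref{prop:key} (nontrivial only at $p=2$) to an equality of multisets of Brauer classes, and the same induction on $m$ driven by locating one matched pair and then applying the $\otimes$-cancellation Proposition \ref{prop:cancellation22}. The gap is that the crux you correctly identify --- extracting a matched pair from the multiset equality --- is asserted rather than proved; this is the content of the paper's Lemma \ref{lem:dimension}. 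That lemma is not a ``direct enumeration'' for $m\leq 5$: it is a case analysis on the dimension $d$ of the $\bbF_2$-span $\langle\{[C_0^{+}(q_j)]\}_j\rangle$ (pigeonhole for $d\leq 2$ and $d=m$, and for $d=m-1$ and $d\in\{3,\dots,m-2\}$ an extremal count comparing the \emph{lowest} possible multiplicity of a fixed class $[C_0^{+}(q_1)]$ on one side of the multiset equality with the \emph{highest} possible multiplicity on the other, under the absurd hypothesis that no pair matches); the inequalities $\Sigma^1>\Sigma^2$ are exactly these comparisons. In particular, even the case $m=5$, $n=6$, $d=3$ of item (iii) is not settled by inspection but by the explicit verification $\Sigma^1_{\mathrm{even}}(5,6,2)=768>576=\Sigma^2_{\mathrm{even}}(5,6,2)$, and item (iv) with $m=5$ needs the analogous inequalities for all $n\geq 5$. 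Your reading of what the $\Sigma$'s measure is the right one, and your observation that a single matched pair per inductive step suffices (rather than the full permutation matching of the singleton classes you first aim for) is precisely what makes the paper's weaker Lemma \ref{lem:dimension} adequate; but without carrying out the extremal multiplicity count the argument is incomplete at its central point.
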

\begin{proof}
Recall from the proof of Proposition \ref{prop:cancellation22} that we have the following computation
\begin{equation}\label{eq:computation1}
U(\perf_\dg(Q_{q_j}))\simeq \begin{cases} U(k)^{\oplus (n-2)} \oplus U(C_0^+(q_j))^{\oplus 2} & n\,\,\mathrm{even}\\
U(k)^{\oplus (n-2)} \oplus U(C_0(q_j)) & n\,\,\mathrm{odd}\end{cases}
\end{equation}
in the category $\mathrm{CSA}(k)^\oplus$. Following Remark \ref{rk:generalization}, we have moreover the following isomorphisms:
\begin{equation}\label{eq:computation2}
U(\perf_\dg(\Pi_{j=1}^m Q_{q_j})) \simeq U(\otimes_{j=1}^m \perf_\dg(Q_{q_j})) \simeq \otimes_{j=1}^m U(\perf_\dg(Q_{q_j}))\,.
\end{equation}
By combining \eqref{eq:computation1}-\eqref{eq:computation2}, we hence obtain the following computation
\begin{equation}\label{eq:computation-main}
U(\perf_\dg(\Pi_{j=1}^m Q_{q_j})) \simeq \begin{cases} \oplus_{S\subseteq \{1, \ldots, m\}} U(\otimes_{s \in S}C_0^+(q_s))^{\oplus (2^{\#(S)} \times (n-2)^{m-\#(S)})} & n\,\,\mathrm{even} \\
\oplus_{S\subseteq \{1, \ldots, m\}} U(\otimes_{s \in S}C_0^+(q_s))^{\oplus ((n-2)^{m-\#(S)})} & n\,\,\mathrm{odd}\,, \end{cases}
\end{equation}
where $\#(S)$ stands for the cardinality of $S$ and $\otimes_{s \in \emptyset} C^+_0(q_s) = k$. Recall that $[C_0^+(q_j)], [C_0(q_j)] \in {}_2\mathrm{Br}(k)$. Therefore, if we have an isomorphism $U(\perf_\dg(\Pi^m_{j=1}Q_{q_j}))\simeq U(\perf_\dg(\Pi^m_{j=1}Q_{q'_j}))$ in the category $\NChow(k)$, then it follows from Proposition \ref{prop:key} that the following sets of Brauer classes are the same up to permutation:
\begin{equation}\label{eq:permutation}
\begin{cases} \cup_{S\subseteq \{1, \ldots, m\}}\{\!\!\!\!\underbrace{[\otimes_{s\in S}C_0^+(q_s)]}_{2^{\#(S)}\times (n-2)^{m-\#(S)}}\!\!\!\!\} \quad  \cup_{S\subseteq \{1, \ldots, m\}}\{\!\!\!\!\underbrace{[\otimes_{s\in S}C_0^+(q'_s)]}_{2^{\#(S)}\times (n-2)^{m-\#(S)}}\!\!\!\!\} & \quad n\,\,\mathrm{even} \\
\cup_{S\subseteq \{1, \ldots, m\}}\{\underbrace{[\otimes_{s\in S}C_0(q_s)]}_{(n-2)^{m-\#(S)}}\} \quad  \cup_{S\subseteq \{1, \ldots, m\}}\{\underbrace{[\otimes_{s\in S}C_0(q'_s)]}_{(n-2)^{m-\#(S)}}\} & \quad n\,\,\mathrm{odd}\,. \\
\end{cases}
\end{equation}
Moreover, Proposition \ref{prop:auxiliar1}(ii) yields the following equalities:
\begin{equation}\label{eq:Brauer-gen11}
\begin{cases} \langle \{[C_0^+(q_j)]\}_{1\leq j \leq m}\rangle = \langle \{[C_0^+(q'_j)]\}_{1\leq j \leq m}\rangle & n\,\,\mathrm{even} \\
\langle \{[C_0(q_j)]\}_{1\leq j \leq m}\rangle = \langle \{[C_0(q'_j)]\}_{1\leq j \leq m}\rangle & n\,\,\mathrm{odd}\,.
\end{cases}
\end{equation}
Note that since $[C_0^+(q_j)], [C_0(q_j)] \in {}_2\mathrm{Br}(k)$, \eqref{eq:Brauer-gen11} is a (finite-dimensional) $\bbF_2$-linear subspace of ${}_2\mathrm{Br}(k)$. 
\begin{notation}\label{not:dimension}
Let us write $d$ for the dimension of the $\bbF_2$-vector space \eqref{eq:Brauer-gen11}.
\end{notation}

\smallskip

{\bf Item (iii).} We will prove item (iii) by induction on $m\geq 1$. Note first that in the particular case where $m=1$, the proof follows from Proposition \ref{prop:3-equivalences}. Let us assume that item (iii) holds for $m-1$, with $m\in\{2, 3, 4, 5\}$. Making use of \eqref{eq:computation2}, we have the following isomorphism
\begin{equation}\label{eq:iso-main00}
U(\perf_\dg(\Pi_{j=1}^{m-1}Q_{q_j}))\otimes U(\perf_\dg(Q_{q_m}))\simeq U(\perf_\dg(\Pi_{j=1}^{m-1}Q_{q'_j}))\otimes U(\perf_\dg(Q_{q'_m}))
\end{equation}
in the category $\NChow(k)$. On the one hand, when $m\in \{2, 3, 4\}$, we have $d\in \{0,1,2, m-1,m\}$. On the other hand, $d\in \{0,1,2,3,4,5\}$ when $m=5$. Moreover, when $d=3$, we have the following inequality: 
$$ \Sigma^1_{\mathrm{even}}(5,6,2)=2^9 + 2^7+ 2^7 = 768 > 576 = 2^8 + 2^8 + 2^6 = \Sigma^2_{\mathrm{even}}(5,6,2)\,.$$
Therefore, thanks to Lemma \ref{lem:dimension} below, there exist integers $r$ and $s$ such that $Q_{q_r}\simeq Q_{q'_s}$. Without loss of generality, we can assume that $Q_{q_m}\simeq Q_{q'_m}$. By applying the $\otimes$-cancellation Proposition \ref{prop:cancellation22} to \eqref{eq:iso-main00} (with $Q_q$ equal to $Q_{q_m}\simeq Q_{q'_m}$), we hence obtain an isomorphism $U(\perf_\dg(\Pi_{j=1}^{m-1}Q_{q_j}))\simeq U(\perf_\dg(\Pi_{j=1}^{m-1}Q_{q'_j}))$ in the category $\NChow(k)$. Using the assumption that item (iii) holds for $m-1$, we therefore conclude that $\Pi_{j=1}^{m-1}Q_{q_j} \simeq \Pi_{j=1}^{m-1}Q_{q'_j}$. Consequently, the searched isomorphism $\Pi_{j=1}^{m}Q_{q_j} \simeq \Pi_{j=1}^{m}Q_{q'_j}$ follows now from the combination of $\Pi_{j=1}^{m-1}Q_{q_j} \simeq \Pi_{j=1}^{m-1}Q_{q'_j}$ with $Q_{q_m}\simeq Q_{q'_m}$.

\smallskip

{\bf Item (iv).} The proof is similar to the the proof of item (iii): simply replace the condition $n=6$ by the condition $I^3(k)=0$ and the computations of $\Sigma^1_{\mathrm{even}}(5,6,2)$ and $\Sigma^2_{\mathrm{even}}(5,6,2)$ by the following computations:
$$
\begin{cases}
\Sigma^1_{\mathrm{even}}(5,n,2)=2 \times (n-2)^4 + 2^5 \times (n-2) + 2^3 \times (n-2)^2 \\
\Sigma^1_{\mathrm{odd}}(5,n,2)=  (n-2)^4 + 2\times (n-2) + (n-2)^2\\
\Sigma^2_{\mathrm{even}}(5,n,2)= 2^2 \times (n-2)^3 + 2^2 \times (n-2)^3 + 2^4 \times (n-2) \\
\Sigma^2_{\mathrm{odd}}(5,n,2)= (n-2)^3 + 2\times (n-2)^3 + (n-2)\,.\
\end{cases}
$$
A simple verification shows that $\Sigma^1_{\mathrm{even}}(5,n,2) > \Sigma^2_{\mathrm{even}}(5,n,2)$ and $\Sigma^1_{\mathrm{odd}}(5,n,2) > \Sigma^2_{\mathrm{odd}}(5,n,2)$ when $n\geq 5$.

\smallskip

{\bf Item (iv').} We will prove item (iv') by induction on $m\geq 6$. Using \eqref{eq:computation2}, we have the isomorphism
\begin{equation}\label{eq:iso-main000}
U(\perf_\dg(\Pi_{j=1}^{m-1}Q_{q_j}))\otimes U(\perf_\dg(Q_{q_m}))\simeq U(\perf_\dg(\Pi_{j=1}^{m-1}Q_{q'_j}))\otimes U(\perf_\dg(Q_{q'_m}))
\end{equation}
in the category $\NChow(k)$. Let us assume that item (iv') holds for $m-1$, with $m\geq 6$ (in the case where $m=6$, item (iv) holds). By definition, $d\in \{0,1,2, \ldots, m-1,m\}$. Moreover, when $d\in \{3, \ldots, m-2\}$, the extra condition \eqref{eq:condition-sums1} (with $l=m-d$) implies that $\Sigma^1_{\mathrm{even}}(m,n, m-d)> \Sigma^2_{\mathrm{even}}(m,n, m-d)$ when $n$ is even or $\Sigma^1_{\mathrm{odd}}(m,n, m-d)>\Sigma^2_{\mathrm{odd}}(m,n, m-d)$ when $n$ is odd. Therefore, thanks to Lemma \ref{lem:dimension} below, there exist integers $r$ and $s$ such that $Q_{q_r}\simeq Q_{q'_s}$. Without loss of generality, we can assume that $Q_{q_m}\simeq Q_{q'_m}$. By applying the $\otimes$-cancellation Proposition \ref{prop:cancellation22} to the isomorphism \eqref{eq:iso-main000} (with $Q_q$ equal to $Q_{q_m}\simeq Q_{q'_m}$), we hence obtain an isomorphism $U(\perf_\dg(\Pi_{j=1}^{m-1}Q_{q_j}))\simeq U(\perf_\dg(\Pi_{j=1}^{m-1}Q_{q'_j}))$ in the category $\NChow(k)$. Now, note that Lemma \ref{lem:implications} implies that if the extra condition \eqref{eq:condition-sums1} holds for $m$, then it also holds for $m-1$. Using the assumption that item (iv') holds for $m-1$ (in the case where $m=6$, item (iv) holds), we therefore conclude that $\Pi_{j=1}^{m-1}Q_{q_j} \simeq \Pi_{j=1}^{m-1}Q_{q'_j}$. Consequently, the searched isomorphism $\Pi_{j=1}^{m}Q_{q_j} \simeq \Pi_{j=1}^{m}Q_{q'_j}$ follows now from the combination of $\Pi_{j=1}^{m-1}Q_{q_j} \simeq \Pi_{j=1}^{m-1}Q_{q'_j}$ with $Q_{q_m}\simeq Q_{q'_m}$.
\end{proof}
\begin{lemma}\label{lem:dimension}
Assume that $n=6$ or that $n\geq 5$ and $I^3(k)=0$.
\begin{itemize}
\item[(i)] If $d\in \{0, 1, 2, m-1, m\}$ (consult Notation \ref{not:dimension}), then there exist integers $r$ and $s$ such that $Q_{q_r}\simeq Q_{q'_s}$.
\item[(i')] If $d\in \{3, \ldots, m-2\}$ and the following extra condition holds (consult Notation \ref{not:sums})
\begin{equation}\label{eq:extra}
\begin{cases} \Sigma^1_{\mathrm{even}}(m,n,m-d)> \Sigma^2_{\mathrm{even}}(m,n,m-d) & \mathrm{n}\,\,\mathrm{even} \\
\Sigma^1_{\mathrm{odd}}(m,n,m-d)> \Sigma^2_{\mathrm{odd}}(m,n,m-d) & \mathrm{n}\,\,\mathrm{odd}\,,
\end{cases}
\end{equation}
then there also exist integers $r$ and $s$ such that $Q_{q_r}\simeq Q_{q'_s}$. 
\end{itemize}
\end{lemma}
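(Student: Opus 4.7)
The plan is to encode the multiset equality underlying \eqref{eq:permutation} as a single polynomial identity in the group ring of a finite $\bbF_2$-vector space and then apply characters to extract the stronger conclusion that the multisets $\{[C_0^+(q_j)]\}_{1\leq j\leq m}$ and $\{[C_0^+(q'_j)]\}_{1\leq j\leq m}$ (with $C_0$ in place of $C_0^+$ when $n$ is odd) coincide. Once that is in hand, for any index $r$ one can pick $s$ with $[C_0^+(q_r)]=[C_0^+(q'_s)]$ (resp.\ $[C_0(q_r)]=[C_0(q'_s)]$), and Proposition~\ref{prop:3-equivalences}---whose hypothesis $n=6$ or $I^3(k)=0$ is the standing hypothesis of the lemma---yields $Q_{q_r}\simeq Q_{q'_s}$. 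This handles \emph{both} (i) and (i') uniformly.

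First I would reformulate the hypothesis. Write $\alpha_j := [C_0^+(q_j)]$, $\beta_j := [C_0^+(q'_j)]$ (respectively $\alpha_j := [C_0(q_j)]$, $\beta_j := [C_0(q'_j)]$) and let $H := \langle \alpha_j\rangle_j = \langle \beta_j\rangle_j$, a $d$-dimensional $\bbF_2$-subspace of ${}_2\mathrm{Br}(k)$. With $c:=2$ when $n$ is even and $c:=1$ when $n$ is odd, expanding both sides shows that \eqref{eq:permutation} is equivalent to the single polynomial identity
$$\prod_{j=1}^{m}\bigl((n-2)+c\,[\alpha_j]\bigr) \;=\; \prod_{j=1}^{m}\bigl((n-2)+c\,[\beta_j]\bigr) \quad\text{in }\bbZ[H].$$
Next, for each character $\chi\colon H\to\{\pm 1\}$ I would apply $\chi$ to both sides. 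Setting $N_\pm := (n-2)\pm c$, $n_\pm^\chi := \#\{j : \chi(\alpha_j) = \pm 1\}$ and $m_\pm^\chi := \#\{j : \chi(\beta_j) = \pm 1\}$, this produces $N_+^{n_+^\chi} N_-^{n_-^\chi} = N_+^{m_+^\chi} N_-^{m_-^\chi}$. Since $n\geq 5$ and $c\in\{1,2\}$, the integers $N_\pm$ are strictly positive with $N_+ > N_-$. Combined with $n_+^\chi+n_-^\chi = m_+^\chi+m_-^\chi = m$, setting $e := n_+^\chi-m_+^\chi$ reduces this to $N_+^e = N_-^e$, which forces $e=0$. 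Hence $n_\pm^\chi = m_\pm^\chi$ for every $\chi$.

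Writing $a_\gamma := \#\{j : \alpha_j = \gamma\}$ and $b_\gamma := \#\{j : \beta_j = \gamma\}$, the previous equalities read $\sum_\gamma a_\gamma \chi(\gamma) = \sum_\gamma b_\gamma \chi(\gamma)$ for every character $\chi$ of $H$, so orthogonality of characters on the finite abelian group $H$ gives $a_\gamma=b_\gamma$ for every $\gamma\in H$; the multisets $\{\alpha_j\}$ and $\{\beta_j\}$ therefore coincide, and the lemma follows via Proposition~\ref{prop:3-equivalences}. The argument is essentially formal; the only step worth inspecting is the implication $N_+^e = N_-^e \Rightarrow e=0$, whose sole input is $N_+ > N_- \geq 1$, which is automatic from $n\geq 5$. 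Consequently the main obstacle is not really an obstacle at all, and the case split $d\in\{0,1,2,m-1,m\}$ versus $d\in\{3,\ldots,m-2\}$ together with the numerical condition \eqref{eq:extra} are not needed for this lemma \emph{per se}: they appear as convenient bookkeeping for the inductive step of Theorem~\ref{thm:auxiliar}, but the uniform character-theoretic argument above proves the conclusion in every dimension $d$.
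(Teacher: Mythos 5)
Your argument is correct, and it is genuinely different from the one in the paper. The paper proves the lemma by a reductio ad absurdum: assuming no pair $Q_{q_r}\simeq Q_{q'_s}$ exists, it compares the \emph{lowest} possible multiplicity of the single class $[C_0^+(q_1)]$ (resp.\ $[C_0(q_1)]$) on the left-hand side of \eqref{eq:permutation} with the \emph{highest} possible multiplicity on the right-hand side, and derives a contradiction from the inequalities $\Sigma^1>\Sigma^2$; this crude extremal count is exactly why the paper needs the case split on $d$ and the extra hypothesis \eqref{eq:extra}. You instead package the whole of \eqref{eq:permutation} as the identity $\prod_j((n-2)+c\,\alpha_j)=\prod_j((n-2)+c\,\beta_j)$ in $\bbZ[H]$ (which is indeed equivalent to it, since the multiplicity of a class $\gamma$ on either side of \eqref{eq:permutation} is precisely the coefficient of $\gamma$ in the corresponding product) and evaluate at all characters of the elementary abelian $2$-group $H$; the only numerical input is $N_+=(n-2)+c>(n-2)-c=N_-\geq 1$, which holds for all $n\geq 5$. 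I checked the steps: the character evaluation, the implication $(N_+/N_-)^e=1\Rightarrow e=0$, and the Fourier-inversion step are all sound, and Proposition \ref{prop:key} does deliver the multiset equality \eqref{eq:permutation} with the stated multiplicities. What your approach buys is substantial: it yields the full multiset equality $\{[C_0^+(q_j)]\}_j=\{[C_0^+(q'_j)]\}_j$ (resp.\ with $C_0$) for \emph{every} value of $d$ and without condition \eqref{eq:extra}, so not only does it subsume both items (i) and (i'), it makes the case analysis, the numerical condition \eqref{eq:extra-sums}, the bound $m\leq 5$, and in fact the entire inductive structure of Theorem \ref{thm:auxiliar} (including the $\otimes$-cancellation Proposition \ref{prop:cancellation22}) unnecessary: after reindexing one gets $Q_{q_j}\simeq Q_{q'_j}$ for all $j$ directly from Proposition \ref{prop:3-equivalences}. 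The one presentational caveat is that the lemma's hypotheses are implicit (they are those in force in the proof of Theorem \ref{thm:auxiliar}, namely the isomorphism of noncommutative motives giving \eqref{eq:permutation} and \eqref{eq:Brauer-gen11}); you use them correctly, but you should state them explicitly if this argument is to stand alone.
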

\begin{proof}
In the case where $d\in \{0, 1, 2\}$, it follows from the equalities \eqref{eq:Brauer-gen11} that there exist integers $r$ and $s$ such that $[C_0^+(q_r)]=[C_0^+(q'_s)]$ when $n$ is even or $[C_0(q_r)]=[C_0(q'_s)]$ when $n$ is odd. Making use of Proposition \ref{prop:3-equivalences}, we hence conclude that $Q_{q_r}\simeq Q_{q'_s}$. In the case where $d=m$, it follows from the precise number of copies of each Brauer class in \eqref{eq:permutation} that there exists a permutation $\sigma$ of the set $\{1, \ldots, m\}$ such that $[C_0^+(q'_j)]=[C_0^+(q_{\sigma(j)})]$ when $n$ is even or $[C_0(q'_j)]=[C_0(q_{\sigma(j)})]$ when $n$ is odd. In particular, there exist integers $r$ and $s$ such that $[C_0^+(q_r)]=[C_0^+(q'_s)]$ when $n$ is even or $[C_0(q_r)]=[C_0(q'_s)]$ when $n$ is odd. Making use of Proposition \ref{prop:3-equivalences}, we hence conclude that $Q_{q_r}\simeq Q_{q'_s}$. In the case where $d=m-1$, let us suppose by absurd that $Q_{q_r}\not\simeq Q_{q'_s}$ for every $1\leq r, s\leq m$. Thanks to Proposition \ref{prop:3-equivalences}, this is equivalent to the condition that $[C_0^+(q_r)]\neq [C_0^+(q'_s)]$ for every $1\leq r, s \leq m$ when $n$ is even or $[C_0(q_r)]\neq[C_0(q'_s)]$ for every $1\leq r, s \leq m$ when $n$ is odd. Without loss of generality, we can assume that the $m-1$ vectors $\{[C_0^+(q_j)]\}_{1\leq j \leq m-1}$, resp. $\{[C_0^+(q'_j)]\}_{1\leq j \leq m-1}$, of the $\bbF_2$-vector space $\langle \{[C_0^+(q_j)]\}_{1\leq j \leq m}\rangle$, resp. $\langle \{[C_0^+(q'_j)]\}_{1\leq j \leq m}\rangle$, are linearly independent when $n$ is even or that the $m-1$ vectors $\{[C_0(q_j)]\}_{1\leq j \leq m-1}$, $\{[C_0(q'_j)]\}_{1\leq j \leq m-1}$, of the $\bbF_2$-vector space $\langle \{[C_0(q_j)]\}_{1\leq j \leq m}\rangle$, resp. $\langle \{[C_0(q'_j)]\}_{1\leq j \leq m}\rangle$, are linearly independent when $n$ is odd. On the one hand, this assumption implies that the {\em lowest} possible number of copies of the Brauer class $[C_0^+(q_1)]$, resp. $[C_0(q_1)]$, on the {\em left-hand side} of \eqref{eq:permutation} is attained when $[C_0^+(q_m)]=[C_0^+(q_2) \otimes \ldots \otimes C_0^+(q_{m-1})]$, resp. $[C_0(q_m)]=[C_0(q_2) \otimes \ldots \otimes C_0(q_{m-1})]$. A simple computation shows that the corresponding number of copies is equal to $\Sigma^1_{\mathrm{even}}(m,n,1)$ when $n$ is even or $\Sigma^1_{\mathrm{odd}}(m,n,1)$ when $n$ is odd. On the other hand, the above assumption implies that the {\em highest} possible number of copies of the Brauer class $[C_0^+(q_1)]$, resp. $[C_0(q_1)]$, on the {\em right-hand side} of \eqref{eq:permutation} is obtained when $[C_0^+(q_1)]=[C_0^+(q'_t) \otimes C_0^+(q'_u)]$, resp. $[C_0(q_1)]=[C_0(q'_t) \otimes C_0(q'_u)]$, for integers $1\leq t\neq u \leq m-1$ and when $[C_0^+(q'_m)]=[C_0^+(q'_v)]$, resp. $[C_0(q'_m)]=[C_0(q'_v)]$, for an integer $1\leq v \leq m-1$ (recall that we are supposing by absurd that $[C_0^+(q_r)]\neq[C_0^+(q'_s)]$ for every $1 \leq r,s \leq m$ when $n$ is even or $[C_0(q_r)]\neq[C_0(q'_s)]$ for every $1 \leq r,s \leq m$ when $n$ is odd). A simple computation shows that the corresponding number of copies is equal to $\Sigma^1_{\mathrm{even}}(m,n,1)$ when $n$ is even or $\Sigma^2_{\mathrm{odd}}(m,n,1)$ when $n$ is odd. Now, note that the following inequalities
$$
\Sigma^1_{\mathrm{even}}(m,n,1)= 2\times (n-2)^{m-1} + 2^m  > 4 \times (n-2)^{m-2} + 2\times (n-2)^{m-2} = \Sigma^2_{\mathrm{even}}(m,n,1) $$
$$
\Sigma^1_{\mathrm{odd}}(m,n,1)= (n-2)^{m-1} + 1 >  (n-2)^{m-2} + (n-2)^{m-2} = \Sigma^2_{\mathrm{odd}}(m,n,1)
$$
lead to a contradiction with the fact that the above two sets \eqref{eq:permutation} are the same up to permutation. Consequently,  there exist integers $r$ and $s$ such that $[C_0^+(q_r)]=[C_0^+(q'_s)]$ when $n$ is even or $[C_0(q_r)]=[C_0(q'_s)]$ when $n$ is odd. Making use of Proposition \ref{prop:3-equivalences}, we hence conclude that $Q_{q_r}\simeq Q_{q'_s}$. This proves item (i).

We now prove item (i'). Let us suppose by absurd that $Q_{q_r}\not\simeq Q_{q'_s}$ for every $1 \leq r, s \leq m$. Thanks to Proposition \ref{prop:3-equivalences}, this is equivalent to the condition that $[C_0^+(q_r)]\neq [C_0^+(q'_s)]$ for every $1\leq r, s \leq m$ when $n$ is even or $[C_0(q_r)]\neq [C_0(q'_s)]$ for every $1\leq r, s \leq m$ when $n$ is odd. Since $d\in \{3, \ldots, m-2\}$, we can assume, without loss of generality, that the $d$ vectors $\{[C_0^+(q_j)]\}_{1\leq j \leq d}$, resp. $\{[C_0^+(q'_j)]\}_{1\leq j \leq d}$, of the $\bbF_2$-vector space $\langle \{[C_0^+(q_j)]\}_{1\leq j \leq m}\rangle$, resp. $\langle \{[C_0^+(q'_j)]\}_{1\leq j \leq m}\rangle$, are linearly independent when $n$ is even or that the $d$ vectors $\{[C_0(q_j)]\}_{1\leq j \leq d}$, resp. $\{[C_0(q'_j)]\}_{1\leq j \leq d}$, of the $\bbF_2$-vector space $\langle \{[C_0(q_j)]\}_{1\leq j \leq m}\rangle$, resp. $\langle \{[C_0(q'_j)]\}_{1\leq j \leq m}\rangle$, are linearly independent when $n$ is odd. On the one hand, this assumption implies that the {\em lowest} possible number of copies of the Brauer class $[C_0^+(q_1)]$, resp. $[C_0(q_1)]$, on the {\em left-hand side} of \eqref{eq:permutation} is attained when $[C_0^+(q_{d+1})]=\cdots = [C_0^+(q_m)]=[C_0^+(q_2)\otimes \cdots \otimes C_0^+(q_{m-1})]$, resp. $[C_0(q_{d+1})]=\cdots = [C_0(q_m)]=[C_0(q_2)\otimes \cdots \otimes C_0(q_{m-1})]$. A simple computation shows that the corresponding number of copies is equal to $\Sigma^1_{\mathrm{even}}(m, n, m-d)$ when $n$ is even or $\Sigma^1_{\mathrm{odd}}(m,n,m-d)$ when $n$ is odd. On the other hand, the above assumption implies that the {\em highest} possible number of copies of Brauer class $[C_0^+(q_1)]$, resp. $[C_0(q_1)]$, on the {\em right-hand side} of \eqref{eq:permutation} is attained when $[C_0^+(q_1)]=[C_0^+(q'_t) \otimes C_0^+(q'_u)]$, resp. $[C_0(q_1)]=[C_0(q'_t) \otimes C_0(q'_u)]$, for integers $1\leq t \neq u \leq d$ and when $[C_0^+(q'_{d+1})]=\cdots =[C_0^+(q'_m)]=[C_0^+(q'_v)]$, resp. $[C_0(q'_{d+1})]=\cdots =[C_0(q'_m)]=[C_0(q'_v)]$, for an integer $1\leq v \leq d$. A simple computation shows that the corresponding number of copies is equal to $\Sigma^2_{\mathrm{even}}(m,n,m-d)$ when $n$ is even or $\Sigma^2_{\mathrm{odd}}(m,n,m-d)$ when $n$ is odd. Thanks to the inequalities \eqref{eq:extra}, we hence obtain a contradiction with the fact that the above two sets \eqref{eq:permutation} are the same up to permutation. Consequently, there exist integers $r$ and $s$ such that $[C_0^+(q_r)]=[C_0^+(q'_s)]$ when $n$ is even or $[C_0(q_r)]=[C_0(q'_s)]$ when $n$ is odd. Making use of Proposition \ref{prop:3-equivalences}, we hence conclude that $Q_{q_r}\simeq Q_{q'_s}$. 
\end{proof}
\section{Proof of Theorem \ref{thm:app5}}
{\bf Item (i).} Recall first that $\mathrm{dim}(\mathrm{Iv}(A,\ast))=\mathrm{deg}(A)$. Following \S\ref{sub:involution}, note that $\mu_\rho(\mathrm{Iv}(A,\ast))=\mathrm{deg}(A)$. Hence, if $[\mathrm{Iv}(A,\ast)]=[\mathrm{Iv}(A',\ast')]$ in $K_0\mathrm{Var}(k)$, we conclude that $\mathrm{dim}(\mathrm{Iv}(A,\ast))=\mathrm{dim}(\mathrm{Iv}(A',\ast'))$. This is equivalent to the equality $\mathrm{deg}(A)=\mathrm{deg}(A')$. 

\smallskip

{\bf Item (ii).} If $[\mathrm{Iv}(A,\ast)]=[\mathrm{Iv}(A',\ast')]$ in $K_0\mathrm{Var}(k)$, then it follows from Proposition \ref{prop:auxiliar1}(ii) that
\begin{equation}\label{eq:equality1}
\langle [A], [C_0^+(A,\ast)], [C_0^-(A,\ast)]\rangle = \langle [A'], [C_0^+(A',\ast')], [C_0^-(A',\ast')] \rangle \,.
\end{equation}
When $\mathrm{deg}(A)\equiv 2$ (mod $4$), we have the following relations in the Brauer group:
\begin{eqnarray}\label{eq:relations1}
2[C_0^+(A,\ast)]=[A] & 3[C_0^+(A,\ast)]=[C_0^-(A,\ast)] & 4[C_0^+(A,\ast)]=[k]\,.
\end{eqnarray}
Note that \eqref{eq:equality1}-\eqref{eq:relations1} imply that $[C_0^+(A,\ast)]=[C_0^+(A',\ast')]$ and $[C_0^-(A,\ast)]=[C_0^-(A',\ast')]$ or that $[C_0^+(A,\ast)]=[C_0^-(A',\ast')]$ and $[C_0^-(A,\ast)]=[C_0^+(A',\ast')]$. Using the fact that $\mathrm{deg}(C_0^+(A,\ast))=\mathrm{deg}(C_0^-(A,\ast))=2^{\frac{\mathrm{deg}(A)}{2}-1}$ and that $\mathrm{deg}(A)=\mathrm{deg}(A')$ (proved in item (i)), we hence conclude that $C_0^\pm(A,\ast)\simeq C_0^\pm(A',\ast')$. 

When $\mathrm{deg}(A)\equiv 0$ (mod $4$), we have the following relations in the Brauer group:
\begin{eqnarray}\label{eq:relations}
2[C_0^+(A,\ast)]=[k] & 2[C_0^-(A,\ast)]=[k] & [C_0^+(A,\ast)]+[C_0^-(A,\ast)]=[A]\,.
\end{eqnarray}
In this case, we need to consider also the noncommutative Chow motive of the involution variety $\mathrm{Iv}(A,\ast)$. Recall from \cite[Example~3.11]{Homogeneous} that we have the following computation
\begin{equation}\label{eq:computation}
U(\perf_\dg(\mathrm{Iv}(A,\ast)))\simeq U(k)^{\oplus \frac{\mathrm{deg}(A)}{2}-1} \oplus U(A)^{\oplus \frac{\mathrm{deg}(A)}{2}-1} \oplus U(C^+_0(A,\ast)) \oplus U(C_0^-(A,\ast))
\end{equation}
in the category $\NChow(k)$. If $[\mathrm{Iv}(A,\ast)]=[\mathrm{Iv}(A',\ast')]$ in the Grothendieck ring of varieties $K_0\mathrm{Var}(k)$, then $\mu_{\mathrm{JT}}([\mathrm{Iv}(A,\ast)])=\mu_{\mathrm{JT}}([\mathrm{Iv}(A',\ast')])$ in $R_{\mathrm{B}}(k)$. Thanks to Proposition \ref{prop:auxiliar1}(i), the latter equality holds if and only if we have an isomorphism $U(\perf_\dg(\mathrm{Iv}(A,\ast)))\simeq U(\perf_\dg(\mathrm{Iv}(A',\ast')))$ in the category $\NChow(k)$. Note that the relations \eqref{eq:relations} imply, in particular, that $[A], [C_0^+(A,\ast)], [C_0^-(A,\ast)] \in {}_2\mathrm{Br}(k)$. Therefore, making use of Proposition \ref{prop:key}, we conclude that the noncommutative Chow motives $U(\perf_\dg(\mathrm{Iv}(A,\ast)))$ and $U(\perf_\dg(\mathrm{Iv}(A',\ast')))$ are isomorphic if and only if the following two sets of Brauer classes 
\begin{eqnarray*}\label{eq:sets}
\{\!\!\!\!\!\underbrace{[k]}_{\frac{\mathrm{deg}(A)}{2}-1}, \underbrace{[A]}_{\frac{\mathrm{deg}(A)}{2}-1}, \underbrace{[C_0^+(A,\ast)]}_{1}, \underbrace{[C_0^-(A,\ast)]}_{1} \} && \{\!\!\!\!\!\underbrace{[k]}_{\frac{\mathrm{deg}(A')}{2}-1}, \underbrace{[A']}_{\frac{\mathrm{deg}(A')}{2}-1}, \underbrace{[C_0^+(A',\ast')]}_{1}, \underbrace{[C_0^-(A',\ast')]}_{1} \}
\end{eqnarray*}
are the same up to permutation, where the numbers below the parenthesis denote the number of copies. Since $\mathrm{deg}(A)\geq 6$, there are at least $2$ copies of $[k]$ and $[A]$. This fact, combined with the relations \eqref{eq:relations}, implies that $[C_0^+(A,\ast)]=[C_0^+(A',\ast')]$ and $[C_0^-(A,\ast)]=[C_0^-(A',\ast')]$ or that $[C_0^+(A,\ast)]=[C_0^-(A',\ast')]$ and $[C_0^-(A,\ast)]=[C_0^+(A',\ast')]$. As above, using the equalities $\mathrm{deg}(C_0^+(A,\ast))=\mathrm{deg}(C_0^-(A,\ast))=2^{\frac{\mathrm{deg}(A)}{2}-1}$ and $\mathrm{deg}(A)=\mathrm{deg}(A')$, we hence conclude that $C_0^\pm(A,\ast)\simeq C_0^\pm(A',\ast')$.

\smallskip
 
{\bf Item (iii).} When $\mathrm{deg}(A)=6$, the assignment $(A,\ast) \mapsto C_0^+(A,\ast) \times C_0^-(A,\ast)$ gives rise to a one-to-one correspondence between isomorphism classes of central simple $k$-algebras of degree $6$ with involution of orthogonal type and trivial discriminant and isomorphism classes of $k$-algebras of the form $Q\times Q^\op$, where $Q$ is a quaternion algebra; consult \cite[Cor.~15.32]{Book-inv}. Note that the $k$-algebras $C_0^+(A,\ast) \times C_0^-(A,\ast)$ and $C_0^+(A',\ast') \times C_0^-(A',\ast')$ are isomorphic if and only if $C_0^\pm(A,\ast)\simeq C_0^\pm(A',\ast')$. Consequently, the proof follows from item (ii) and from the general fact that two central simple $k$-algebras with involution of orthogonal type $(A,\ast)$ and $(A',\ast')$ are isomorphic if and only if the involution varieties $\mathrm{Iv}(A,\ast)$ and $\mathrm{Iv}(A',\ast')$ are isomorphic.

\smallskip

{\bf Item (iv).} When $I^3(k)=0$, we have the following classification theorem: if $\mathrm{deg}(A)=\mathrm{deg}(A')$ and $C_0^\pm(A,\ast)\simeq C_0^\pm(A',\ast')$, then the central simple algebras with involution of orthogonal type $(A,\ast)$ and $(A',\ast')$ are isomorphic; consult \cite[Thm.~A]{LewisT}. Consequently, the proof follows from the combination of items (i)-(ii) with the general fact that two central simple $k$-algebras with involution of orthogonal type $(A,\ast)$ and $(A',\ast')$ are isomorphic if and only if the involution varieties $\mathrm{Iv}(A,\ast)$ and $\mathrm{Iv}(A',\ast')$ are isomorphic.

\smallskip

\medbreak\noindent\textbf{Acknowledgments.} I am grateful to Michael Artin for enlightning discussions about Severi-Brauer varieties, to Marcello Bernardara for a stimulating discussion about the Amitsur's conjecture, and to Asher Auel for the references \cite{ELam,LewisT}. I am also very grateful to the Institut des Hautes \'Etudes Scientifiques (IH\'ES) and to the Max-Planck-Institut f\"ur Mathematik (MPIM) for their hospitality, where this work was finalized.

\end{document}

\end{proof}